\newtheorem{theorem}{Theorem}[section]
\newtheorem{proposition}[theorem]{Proposition}
\newtheorem{definition}[theorem]{Definition}
\newtheorem{lemma}[theorem]{Lemma}
\newtheorem{corollary}[theorem]{Corollary}
\newtheorem{remark}[theorem]{Remark}
\newtheorem{conjecture*}{Conjecture}
\newcommand{\wX}{{\Xi}_{\mathrm{ext}}}
\newcommand{\X}{\widetilde{\Xi}}
\newcommand{\Xis}{\Xi}
\newcommand{\diam}{\mathrm{diam}}
\newcommand{\wch}{\widetilde{\chi}}
\newcommand{\vol}{\mathrm{vol}}
\newcommand{\A}{\mathcal{A}}
\newcommand{\spam}{\mathop{\mathrm{span}}}
\newcommand{\supp}[1]{{\text{supp}}({#1})}
\newcommand{\ints}{\mathbb{Z}}
\newcommand{\nats}{\mathbb{N}}
\newcommand{\N}{\mathbb{N}}
\newcommand{\reals}{\mathbb{R}}
\newcommand{\RR}{\mathbb{R}}
\renewcommand{\S}{\mathbb{S}}
\newcommand{\comps}{\mathbb{C}}
\newcommand{\M}{\Omega}
\newcommand{\dif}{\mathrm{d}}
\renewcommand{\a}{\mathbf{a}}
\newcommand{\dist}{\mathrm{dist}}
\renewcommand{\d}{\mathrm{dist}}
\renewcommand{\A}{\mathcal{A}}
\newcommand{\B}{\mathcal{B}}
\newcommand{\Manoa}{M\=anoa}
\newcommand{\Hawaii}{Hawai\kern.05em`\kern.05em\relax i }
\newcommand{\vt}{\tilde{v}}
\newcommand{\tp}{\widetilde{p}}
\newcommand{\K}{\mathrm{K}}
\newcommand{\Om}{\widetilde{\Omega}}
\newcommand{\spn}{\mathrm {span}}
\newcommand{\ch}{\raisebox{2pt}{$\chi$}}
\def\Lam{\varLambda}
\def\caln{{\mathcal N}}
\def\cN{{\mathcal N}}
\def\bfa{{\bf a}}
\def\bfc{{\bf c}}
\def\bfy{{\bf y}}
\def\bfA{{\bf A}}
\def\V{{\widetilde{V}}}
\def\G{{\mathrm{G}}}
\def\wbfA{{\widetilde{\bfA}}}
\def\wA{{\widetilde{A}}}
\numberwithin{equation}{section}
\title{An inverse theorem for compact Lipschitz regions in $\RR^d$ using localized kernel bases
\thanks{ \emph{2000 Mathematics Subject Classification:} 41A17, 41A27, 41A63}
\thanks{\emph{Key words:}
radial basis functions, Sobolev spaces, Bernstein \& Nikolskii inequalities, trace estimate}}
\author{T.Hangelbroek\thanks{ 
Department of Mathematics, University of \Hawaii   -- \Manoa,
2565 McCarthy Mall,
Honolulu, HI, USA. Research supported
by grant DMS-1413726  from the National Science Foundation.},
F. J. Narcowich\thanks{ Department of Mathematics, Texas A\&M
   University, College Station, TX 77843, USA. Research
    supported by grant DMS-1514789 from the National   Science Foundation.}, 
 C. Rieger\thanks{Institut f{\" u}r Numerische Simulation,
Universit{\"a}t Bonn,
 Wegelerstr. 6,
53115 Bonn, Germany. Research supported
by Collaborative Research Centre (SFB) 1060 of the Deutsche Forschungsgemeinschaft.},
J. D. Ward\thanks{ Department of Mathematics, Texas A\&M University,
    College Station, TX 77843, USA. Research supported by
    grant DMS-1514789 from the National Science
  Foundation.}  }
\begin{document}
\maketitle
\begin{abstract}
While inverse estimates in the context of radial basis function approximation on boundary-free domains 
have been known for at least ten years, 
such theorems for the more important and difficult setting of bounded domains have been notably absent. 
This article  develops inverse estimates for finite dimensional spaces arising in radial basis function approximation 
and meshless methods. 
The inverse estimates we consider control Sobolev norms of linear combinations of a localized basis  
by the $L_p$ norm over a bounded domain. 
The localized basis is generated by forming  local Lagrange functions 
for certain types of RBFs (namely Mat{\'e}rn and  surface spline RBFs).
In this way it extends the boundary-free
construction recently presented in \cite{FHNWW}. 
\end{abstract}
\section{Introduction} 
This article presents a construction for localized bases generated by radial basis functions (RBFs)
in the presence of a boundary
and develops analytic properties of this basis, most notably inverse inequalities.
Such inequalities are an essential tool
in the numerical solution of PDEs by 
finite element and related methods (see \cite{Dahmen, Graham, Georgoulis}) 
notably in proving $\inf$-$\sup$ 
(Babu{\v s}ka-Brezzi) conditions,  which play a central role for mixed element and saddle point problems \cite{Bren, guermond, guzman, melenk}.
They are also prevalent in approximation theory (where they are called ``Bernstein inequalities");  specifically
they are used to obtain characterization of approximation spaces as interpolation spaces by way of $K$-functionals \cite{nonlinear}.

The type of localized basis investigated in this article has been introduced very recently for the boundary-free setting 
(e.g., on a manifold without boundary)
and has already been employed 
to  deliver strong results in function approximation and scattered data fitting \cite{FHNWW}, 
numerical quadrature \cite{FHNWW2} and solution of PDEs \cite{NRW} and integral equations \cite{LR}. 
Indeed, in \cite{LR}, Lehoucq and Rowe have applied the localized basis investigated in this article  to 
obtain a Galerkin solution to a constrained integral equation, and  they have used the $L_p$ stability of the basis 
(presented in this paper in Section 4)
to obtain norm bounds on the stiffness matrix associated with this problem.

The inverse estimates we consider treat finite dimensional spaces of functions, bounding strong (Sobolev) norms 
by weak (Lebesgue) norms:
\begin{equation}\label{template}
\|s\|_{W_p^{\sigma}(\Omega)} \le C h^{-\sigma} \|s\|_{L_p(\Omega)}\qquad\text{(or }\|s\|_{C^{\sigma}(\Omega)} \le C h^{-\sigma} \|s\|_{L_{\infty}(\Omega)}\ \text{for }p=\infty\text{)},
\end{equation}
where $\Omega$ is a bounded subset in $\RR^d$, subject to mild conditions on $\partial \Omega$
and 
$h$ is the fill distance (also known as mesh ratio) of the finite set of points used to generate our finite dimensional
space (see Section \ref{point sets} for a precise description).
In one sense, these estimates can be viewed as providing an  operator norm bound (from $L_p \to L_p$)
of differential operators restricted to  this finite dimensional space.
In another sense, they give precise equivalences between different norms in terms of a 
simple measure of the complexity (given by the parameter 
$N$ above) of the 
finite dimensional space.
Direct consequences of these inverse estimates include trace estimates and Bernstein-Nikolskii inequalities.

This topic has been considered in the boundary-free setting by a number of authors, 
we list \cite{NWW_Bernstein}, \cite{rieger2008sampling}, \cite{MNPW}, \cite{Ward_J}, \cite{griebel2013multiscale} (although there are certainly others).
The  inequalities we consider here are similar, but 
depend only on the norm of a basic function over a bounded region\footnote{
A previous result in the setting of a bounded region was presented in \cite{SW_Inv}, but these
estimates significantly undershoot the precise exponent $-\sigma$ in (\ref{template}).}. 
Without a doubt this type of estimate is significantly more challenging when
a boundary is present and has, to the best of our knowledge, remained elusive.
Indeed, such inverse inequalities seem to have been absent for meshless methods in general
(not only radial basis function approximation, cf. the discussion in \cite[Section 7]{melenk}). 

%
%
In this article we consider 
two prominent  families of radial basis functions: 
the  Mat{\'e}rn (or Whittle-Mat{\'e}rn) and surface spline kernels. 
Generalizations to other kernels and other settings (namely, compact Riemannian manifolds)
are fairly straightforward, but complicated. They have been considered in the manuscript \cite{HNRW1}.

The conventional finite dimensional space associated with a positive definite RBF $\phi$ and a finite set $X \subset \RR^d$ has the form 
$S(X)= \mathrm{span}_{\eta\in X} \phi(\cdot-\eta)$; 
for a conditionally positive definite RBF, $S(X)$ involves polynomials; see Section~\ref{SS:ss_kernels}. 
A common set-up for a host of numerical problems invites the user to employ the  basis of sampled kernels $\phi(\cdot-\eta)$, $\eta\in X$ as one would use 
polynomials, splines, finite elements, etc.: that is to say as  test functions for Galerkin or collocation methods, or as basis functions to solve interpolation, quadrature or
other basic problems. 

For a basic interpolation problem, using $S(X)$ to interpolate data sampled at the point set $X$, the ensuing interpolation matrix will be positive definite, 
thanks to the kernel's positive definiteness, but if $X$ is  sampled densely, the interpolation matrix will become dense\footnote{One may attempt to circumvent this problem by 
dilating the kernel; this is often done, but will generally result in degraded rates of approximation.}.

Instead of using the basis of kernels, one may attempt to use another basis for $S(X)$; one for which basic matrices (Gram, collocation, stiffness, interpolation) exhibit off-diagonal decay. 
Univariate splines provide a prime example of this phenomenon: for a fixed $k$, the shifted truncated powers $(x-t_j)\mapsto x_+^k$ provide, in conjunction with polynomials of degree $k$ or less,
a basis for the spline space with breakpoints at $t_j$, but this basis is known to be poorly localized. However, the $B$-spline basis is well-localized, with elements having support which is not only
compactly supported, but {\em stationary} in the sense that it shrinks with the spacing of the breakpoints.

We are concerned with an analogous
{\em localization problem}  for radial basis functions:
\begin{quotation}
Is there a basis for $S(X)$ where the various elements exhibit a fast rate of {\em stationary} decay?
\end{quotation}
If an alternative basis is available for which the interpolation matrices are sparse, we say the basis is well-localized.
For the Mat{\'e}rn and surface-spline kernels,
the Lagrange function $\chi_\eta$  is well localized 
  in a neighborhood of  $\eta$ where the points from $X$ are distributed quasi-uniformly.
If this is not the case, for instance if $\eta$ occurs near to the boundary  of the convex hull of $X$, localization is lost. 

This issue
 can be circumvented by using only the Lagrange basis elements $\chi_{\xi}$ that have centers $\xi$ in a sufficiently large subset  
$\Xi \subset X$, where  $\Xi$ is chosen so that the Lagrange functions $\chi_\xi$, $\xi \in \Xi$, are localized. 
Using these elements we may define $V_\Xi:=\mathrm{span}_{\xi \in \Xi} \chi_{\xi}$, which is of course a subspace of $S(X)$. 
To avoid a possible point of confusion, we  emphasize that $V_\Xi \ne S(\Xi)$. 
The former space requires all basis functions centered in $X$ for its construction, the latter only those in $\Xi$.

After this initial streamlining,
it is important to note that even though $\chi_\xi$, $\xi \in \Xi$, is spatially localized, 
its construction still requires \emph{all} of the points in $X$. 
Thus finding the $\chi_\xi$'s is computationally expensive. 
In \cite{FHNWW}, \emph{local} Lagrange functions $\{b_\xi\}_{\xi \in \Xi}$ were introduced. 
Constructing them is done by first choosing points $\Upsilon(\xi) \subset X$ in a small neighborhood  of $\xi\in \Xi$,  
and then finding the Lagrange function $b_\xi \in S(\Upsilon(\xi)) \subset S(X)$. 
Since $\Upsilon(\xi)$ will contain many fewer points then $X$, it will be much less expensive to find $b_\xi$. 
Finally, we define $\V_{\Xi} =  \mathrm{span}_{\xi \in \Xi} b_{\xi}$, which is a subspace of $S(X)$. 
We remark that $\chi_\xi \ne b_\xi$ and $V_{\Xi} \ne \V_{\Xi}$. However, they are close -- a fact that will prove important in the sequel. 

We now turn to the connection between the set $\Omega$ and the spaces described above. 
At the start, we are given a quasi-uniform set $\Xi \in \Omega$. 
The enlarged set $X$ is \emph{not} given. 
Rather, an extension is constructed from $\Xi$, using a method -- described in Section~\ref{SS:Extending_points} -- 
that preserves the 
key geometric properties 
of $\Xi$. 
The extension, which will be denoted by $\widetilde \Xi$ later (instead of $X$),  
is contained in a bounded region $\Om$ that contains $\Omega$ and is roughly speaking about twice the size of $\Omega$. 
It is for this setup that we get estimates of the form (\ref{template}) for $s\in V_{\Xi}$ or $\V_{\Xi}$. (See Theorem \ref{main}.)

\subsection{Overview and Outline} 
We begin by giving basic explanation and background on RBFs used in this article. This is done in Section 2.

In Section 3, we introduce the Lagrange basis (the functions generating the space $V_{\Xi}$)
and  provide estimates that control the Sobolev norm (i.e., $W_p^{\sigma}(\Omega)$) 
of a function in $V_{\Xi}$ by the $\ell_p$ norm on the Lagrange coefficients and in addition by the $L_p$ norm of $s$. That is, for $s= \sum_{\xi\in \Xi} a_{\xi} \chi_{\xi}$ we show 
$$
\|s\|_{W_p^{\sigma}(\Omega)}\le C (\#\Xi)^{1/p-\sigma/d}\|(a_{\xi})_{\xi\in\Xi}\|_{\ell_p(\Xi)} 
\quad \text{and}\quad 
\|s\|_{W_p^{\sigma}(\Omega)}\le C (\#\Xi)^{-\sigma/d}\|s\|_{L_p(\Omega)}.
$$
Such a result has not appeared previously.

Section 4 introduces the other stable basis considered in this paper: the local Lagrange basis, 
which generates the space $\V_{\Xi}$. 
We give sufficient conditions to prove existence and stability of such a basis. We give estimates that control the Sobolev norm (i.e., $W_p^{\sigma}(\Omega)$) 
of a function in $\V_{\Xi}$ by the $\ell_p$ norm on the local Lagrange coefficients and by the $L_p$ norm of the function.
This result  is presented in Theorem~ \ref{main_local_bernstein}. Next we compare the sequence norm with the $L_p$ norm of 
an expansion
 $s = \sum_{\xi\in\Xi} a_{\xi} \chi_{\xi} \in V_{\Xi}$  or $s = \sum_{\xi\in\Xi} a_{\xi} \chi_{\xi} \in \V_{\Xi}$ 
 over the domain $\Omega$. We thus obtain
$$\|(a_{\xi})_{\xi\in\Xi}\|_{\ell_p(\Xi)}\sim C(\# \Xi)^{-1/p} \|s\|_{L_p(\Omega)}.$$

In the final section we give our main inverse estimates. For $s\in \V_{\Xi}$ we have
$$
\|s\|_{W_p^{\sigma}(\Omega)} \le C (\#\Xi)^{-\sigma/d} \|s\|_{L_p(\Omega)},
$$
and we use this to demonstrate trace estimates for that space.

\section{Background: RBF approximation on bounded domains}
We begin by describing the basic elements used in this article, 
starting with geometric properties of point sets, 
a discussion of the the underlying  domain,
smoothness spaces on the domain,
and finishing with some background about the radial basis functions which we use.

\subsection{Point sets}\label{point sets}
 Given a set $D\subset \reals^d$ and  a discrete, possibly infinite, set 
 $X \subset D$, we define its  \emph{fill distance} 
 (or \emph{mesh norm}) $h$, the \emph{separation radius} $q$ and 
 the \emph{mesh ratio} $\rho$ to be:
\begin{equation} \label{minimal-separation}
 h(X,D):=\sup_{x\in D} \d(x,X), \quad    q(X):=\frac12 \inf_{\xi \in X} 
\d(\xi, X\setminus \{\xi\}) , \quad  \rho(X,D):=\frac{h(X,D)}{q(X)},
\end{equation}  
where in defining $\rho(X,D)$ we assume that $q(X)>0$. 

When there is no chance of confusion, we drop dependence in these parameters on $X$ and $D$ (referring simply to $h,q$ and $\rho$).

\begin{remark}
A finite fill distance $h$
guarantees that the set $D$ is covered by the  family of balls $B(\xi,h): = \{x\in D\mid \dist(x,\xi)< h\}$, $\xi\in X$. 
A positive separation radius $q$ guarantees that $B(\xi,q)\cap B(\zeta,q) = \{\}$ for distinct $\zeta,\xi \in \Xi$.
 The mesh ratio, which automatically satisfies  $\rho\ge 1$, measures  the uniformity of the 
distribution of $X$ in $D$. 
The larger $\rho(X,D)$ is, the less uniform the distribution is. If $\rho$ is finite, 
then we say that the point set $X$ is quasi-uniformly distributed (in $D$), or simply that $X$ is quasi-uniform. 

Note that, for a compact subset $D$ and a nonempty, finite subset $X\subset D$, the fill distance and separation radius are both positive and finite $0<q<h<\infty$. Consequently, $\rho$ is finite, too.

Many of the results in this article depend in some way on the geometry of the point set $X$ -- 
often this emerges in an estimate, where a constant depends on $\rho$. In most cases, (as one may expect)
the strength of the estimate degrades as $\rho$ increases. Throughout the paper, we have attempted make this control explicit, by factoring, whenever possible, the constant into a part which 
is  totally independent of the point set, and another, which is a function of $\rho$.
\end{remark}

It is often useful to estimate certain sums over $X$.  Assume that $q(X)>0$. If $f:[0,\infty) \to [0,\infty)$
is  a positive, decreasing, continuous function, then 
 \begin{equation}\label{decreasing}
\sum_{\zeta\in X} f(\dist(\zeta,\xi))  \le f(0) + C\sum_{n=1}^\infty n^{d-1} f(nq)
\end{equation}
where $C$ depends only on the spatial dimension $d$. 
This is easily established by introducing annuli centered at $\xi$, with inside radius $nq$ and outside radius $(n+1)q$, $n\ge 1$. 
The number of points contained in each annulus is proportional to $n^{d-1}$, and the contribution to the sum from each $n$,  $n\ge 1$, is less than $n^{d-1}f(nq)$. Hence, \eqref{decreasing} holds. 
 
 \subsection{The domain $\Omega$ }\label{bounded_domain}
We now consider a bounded region $\Omega \subset \reals^d$ containing a finite point set $\Xi$ with
$h=h(\Xi,\Omega)$ and $q = q(\Xi)$ as defined above.
This presents two challenges. 

The first concerns  $\Xi$ -- although we may expect it to be finely sampled (often referred to as {\em sufficiently dense}, meaning that $h(\Xi,\Omega)$ is small) in $\Omega$, it will not be so 
in a neighborhood of $\Omega$. 
To construct the localized bases to be used in the sequel, 
we need a larger set $X\subset \reals^d$ 
so that $X\cap \Omega = \Xi$. 
In other words, we require some extra points to  lie outside of $\Omega$ 
(in fact, when working with local Lagrange functions $b_{\xi}$, 
it suffices to consider only a very small extension 
$\Upsilon \subset \{x\in \reals^d \mid \dist(x,\Omega)<K h |\log h|\}$).
This assumption is in place to guarantee decay of the basis functions -- in other words, it is only a tool for guaranteeing 
the decay of $\chi_{\xi}$ or $b_{\xi}$, and is not otherwise important for the stability estimate. 
It would be quite reasonable to be `given' initially only the set $\Xi\subset \Omega$  and to use this to 
construct $X$ .
In Lemma \ref{Extension} below we
demonstrate how to extend a given set of centers $\Xi\subset \Omega$ in a controlled way to obtain a 
satisfactory set $X$.

The second challenge concerns the domain $\Omega$.
For estimates relating $\|\bfa \|_{\ell_p}$ and the $L_p$ norm of expansions $\|\sum_{\xi} a_{\xi} b_\xi\|$
or $\|\sum_{\xi} a_{\xi} \chi_\xi\|$ the boundary becomes  more important.
The extra assumption we make on $\Omega$, in force throughout the article, is that
$\M$ satisfies an interior cone condition (see Appendix \ref{Appendix_A} for a discussion).

\subsection{Extending points}\label{SS:Extending_points}
Given $\M$  and $\Xis \subset\M$, 
we wish to find an extension $\wX \supset \Xis$ 
dense in $\reals^d$
so that 
the separation radius does not decrease and the fill distance is controlled (and, consequently, $\rho$ does not increase). A simple constructive example is the following.
\begin{lemma}\label{Extension}
Suppose $\Xi\subset \M$ has fill distance $h(\Xis,\M) = h$ and separation radius $q(\Xis) = q$.
Then there is a discrete set $\wX$ so that   $\wX\cap \Omega = \Xi$,
$q({\wX}) = q,$
and $h(\wX,\reals^d) =  h(\sqrt{d}/2+2)$.
\end{lemma}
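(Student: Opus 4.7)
The plan is to construct $\wX$ explicitly by adjoining to $\Xis$ a pruned cubic lattice in the complement of $\M$, and then to verify the three required properties by direct case analysis.

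Fix any translate $\mathcal{L} := h\ints^d + v$ of the cubic lattice of spacing $h$; its minimum interpoint distance is $h$ and its fill distance in $\reals^d$ is the half-diagonal $h\sqrt{d}/2$ of a fundamental cube. Set
\[
\wX := \Xis \cup \bigl\{\ell \in \mathcal{L} : \ell \notin \M \text{ and } \dist(\ell,\Xis) \geq 2q \bigr\}.
\]
The first clause in the definition of the adjoined set immediately gives $\wX \cap \M = \Xis$. For the separation radius, pairs inside $\Xis$ lie at distance $\geq 2q$ by hypothesis; pairs of adjoined lattice points lie at distance $\geq h \geq 2q$ under the standard normalization $\rho \geq 2$ (the opposite case is handled by coarsening the lattice spacing to $2q$); and mixed pairs consisting of one lattice point and one point of $\Xis$ lie at distance $\geq 2q$ by the second clause. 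Hence the minimum interpoint distance equals $2q$, attained within $\Xis$, so $q(\wX) = q$.

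For the fill-distance bound, take an arbitrary $y \in \reals^d$. If $y \in \M$ then $\dist(y,\wX) \leq \dist(y,\Xis) \leq h$. Otherwise let $\ell_0 \in \mathcal{L}$ be a lattice point nearest $y$, so that $\dist(y,\ell_0) \leq h\sqrt{d}/2$; three sub-cases exhaust the possibilities. If $\ell_0$ was adjoined, then $\dist(y,\wX) \leq h\sqrt{d}/2$. If $\ell_0 \in \M$, then the segment $[y,\ell_0]$ must cross $\partial\M$ at some $z$, and $\dist(y,\Xis) \leq \dist(y,z) + \dist(z,\Xis) \leq h\sqrt{d}/2 + h$. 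Finally, if $\ell_0 \notin \M$ but $\dist(\ell_0,\Xis) < 2q$, the triangle inequality gives $\dist(y,\Xis) \leq h\sqrt{d}/2 + 2q \leq h(\sqrt{d}/2 + 2)$. The worst of the three bounds is $h(\sqrt{d}/2 + 2)$, matching the claim.

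The only real subtlety is the lattice-lattice separation under the regime $\rho < 2$, where spacing $h$ is too small to guarantee that adjoined lattice points are separated by $2q$; this is handled by rescaling the lattice to spacing $2q$ and rerunning the fill-distance analysis, after which the same bound still holds in the quasi-uniform regime.
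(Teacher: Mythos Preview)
Your construction and the paper's are essentially the same idea: adjoin to $\Xi$ a pruned copy of the cubic lattice $h\ints^d$ and verify the three properties by direct case analysis. The paper prunes by the single condition $\dist(\zeta,\Omega)\ge h$ rather than your pair of conditions $\ell\notin\Omega$ and $\dist(\ell,\Xi)\ge 2q$; this collapses your three fill-distance sub-cases into two, but is otherwise equivalent.

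One caution: $\rho\ge 2$ is not the ``standard normalization'' (the general bound is only $\rho\ge 1$), and your proposed fix of coarsening the lattice to spacing $2q$ when $\rho<2$ does not actually recover the stated constant $h(\sqrt{d}/2+2)$, since the worst sub-case then yields $q\sqrt{d}+2q$, which for $\rho$ near $1$ exceeds the claim. The paper's own proof has the same gap (it asserts $q(\wX)=q$ without comment, which likewise requires $h\ge 2q$); the Remark immediately following the lemma in the paper acknowledges that other extensions only achieve $q(\wX)=\min(q,h/2)$. So treat the exact equalities in the lemma as holding under the tacit assumption $\rho\ge 2$, and do not worry further about that edge case.
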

\begin{proof}
We proceed as follows: let $\wX = \Xi\cup\{\zeta \in h\ints^d \mid \dist(\zeta, \Omega) \ge h\}$. 
We note that $h(h\ints^d, \reals^d) = \frac{\sqrt{d}}{2}h$
and $q(h\ints^d) = h$. 
It follows immediately that $q(\wX) = q$. If $x \in \reals^d$ is within $(\frac{\sqrt{d}}{2}+1)h$ of $\Omega$, 
then $\dist(x, \Xi) \le (\frac{\sqrt{d}}{2}+2)h$.
On the other hand, if $x \in \reals^d$ satisfies
$\dist(x,\Omega)>(\frac{\sqrt{d}}{2}+1)h$
then there is $\zeta \in h\ints^d$ with $\dist(x,\zeta) < \frac{\sqrt{d}}{2}h$
 so that $\dist(\zeta,\Omega) >h$ (and $\zeta$ is therefore in $\wX$).
\end{proof}

\begin{figure}[!ht]

\centering
\includegraphics{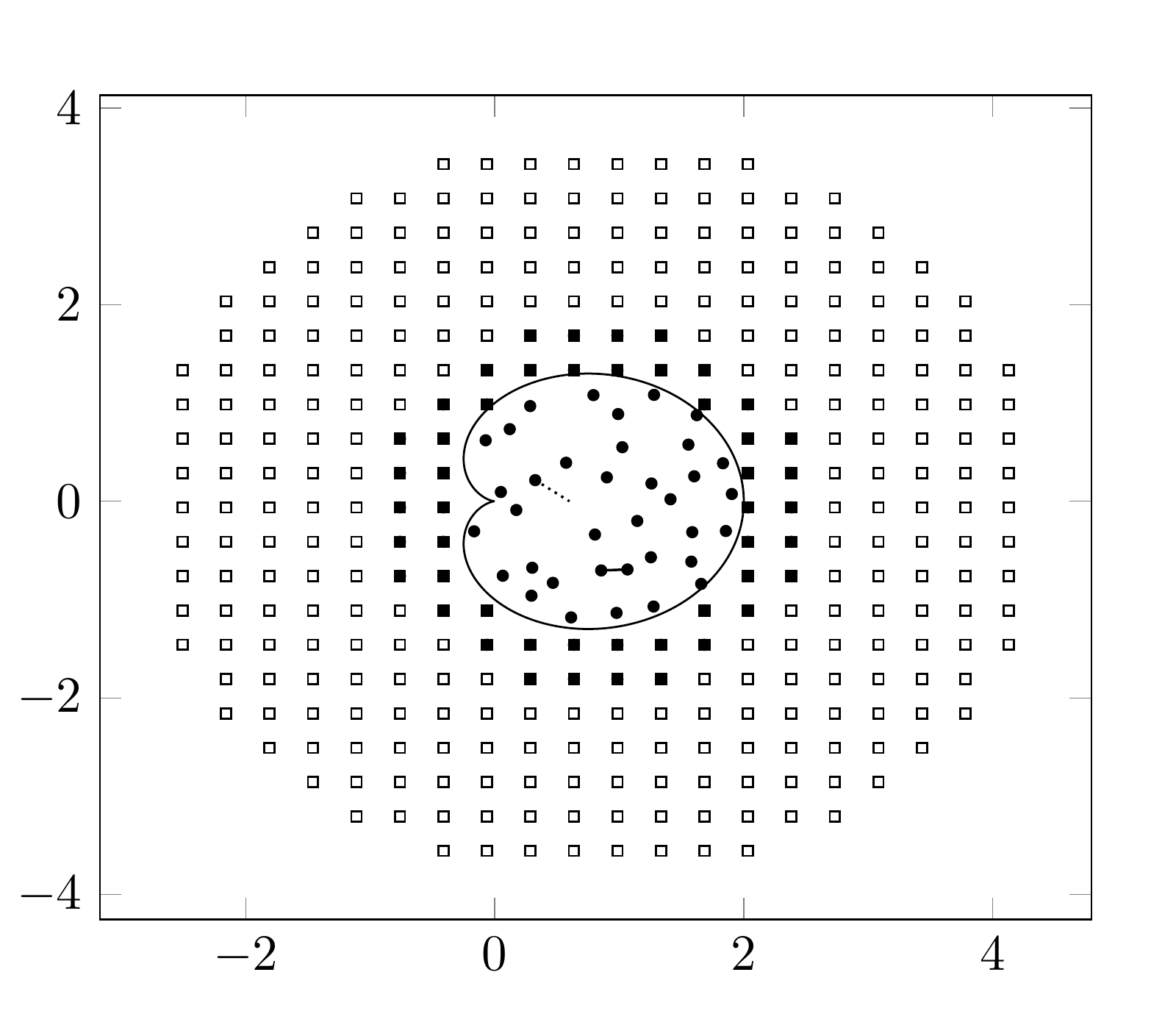}
 \caption{This image shows the domain $\Omega$ (the region inside the cardioid) with a set of points $\Xi\subset \Omega$
 indicated with $\bullet$. The dotted line segment indicates $h$ - the greatest distance between a point of  $\Omega$ and one of $\Xi$.
 The solid line segment indicates $2q$, the nearest neighbor distance in $\Xi$.
 The elements in the  extended point set 
 $\X\setminus \Xi$ are denoted with a square -- these are the centers used to construct $\chi_{\xi}$ (discussed in Section 3). 
 The points $\blacksquare$ denote the points of $\Upsilon$, which are used to construct $b_{\xi}$ (this is done in Section 4).}
 \label{figure2}
 
\end{figure}

\begin{remark}\label{True_Extension}
We note that other extensions exist which do not increase $h$. For example, \cite[Lemma 5.1]{HNRW1} extends points
so that $h(\wX,\reals^d)=h$ and $q(\wX) = \min(q, h/2)$. 
As an expository convenience, we use an extension $\wX$ of $\Xi$ 
to $\reals^d$ which does not increase $h$. In practice, an extension could be used which might not precisely preserve the geometry of the point set (such as the elementary one in Lemma \ref{Extension}). This will not change the results in this paper, other than by modifying slightly the constants.
We leave it to the reader to make the (very simple) modifications necessary to treat other extensions (which would increase $h$ and $\rho$).
\end{remark}

We construct the extended point set  in an extended neighborhood 
\begin{equation}\label{finite_extension} 
\X := \wX\cap\Om
\quad \text{where}\quad
\Om := \{x\in \reals^d\mid \dist(x,\Omega)\le \diam (\Omega)\}
\end{equation}
and where $\wX$ is constructed according to the method of 
Remark \ref{True_Extension}.

\subsection{Smoothness spaces on $\Omega$}\label{SS:Sobolev}
In order to present a suitably robust family of inverse estimates, we employ a scale of spaces depending on a positive, occasionally fractional, smoothness parameter; as in \cite{Bren}, 
for integer values of this parameter, we use the conventional Sobolev spaces, while for fractional values we use fractional spaces, which involve a H{\" o}lder-like seminorm.

For a domain $\Omega\subset \reals^d$, the integer order Sobolev space is defined in the conventional way.
For $1\le p<\infty$ and $m\in \nats$, we have the semi-norm and norm 
$$|u|_{W_p^m(\Omega)}^p : = \sum_{|\alpha| = m} \begin{pmatrix} m\\ \alpha \end{pmatrix} \int_{\Omega} |D^{\alpha}u(x)|^p\,\mathrm{d}x, \qquad
\|u\|_{W_p^m(\Omega)}^p : = \sum_{k=0}^m \begin{pmatrix} m\\ k\end{pmatrix} |u|_{W_p^k (\Omega)}^p.
$$
Note that for the first expression (the Sobolev semi-norm), we use the binomial coefficient  with multi-integers $\begin{pmatrix} m\\ \alpha \end{pmatrix}  =\frac{m!}{\alpha_1!\dots \alpha_d!}$ 
while for the second  we use a standard binomial coefficient 
$\begin{pmatrix} m\\ k \end{pmatrix}  =\frac{m!}{k!(m-k)!}$. Although other weights would give equivalent norms, resulting in the same Sobolev spaces equipped with the same topology, these choices of coefficients will be necessary to obtain the specific reproducing kernels we desire
(see Section \ref{SS:M_kernels} and \ref{SS:ss_kernels}).

For fractional orders $\sigma = m+\delta\notin \nats$ with $0<\delta<1$
we add the Slobodeckij semi-norm 
$$
|u|_{W_p^{\sigma}(\Omega)}^p
:=\sum_{|\alpha|=m} \int_{\Omega}\int_{\Omega} \frac{|D^{\alpha}u(x) - D^{\alpha}u(y)|^p}{|x-y|^{d+p\delta}}\,\dif x \dif y,
\qquad
\|u\|_{W_p^\sigma(\Omega)}^p 
: = \|u\|_{W_p^m(\Omega)}^p+
|u|_{W_p^{\sigma}(\Omega)}^p.
$$
We note that when $\sigma=m+\delta$ is 
fractional\footnote{When $\sigma = m$ is an integer, we have $W_2^m(\Omega) = B_{2,2}^{m}(\Omega)$, although $W_p^m(\Omega) \subsetneq B_{p,p}^{m}(\Omega)$ for $p>2$ and $W_p^m(\Omega) \supsetneq B_{p,p}^{m}(\Omega)$ for $p<2$.}, $W_p^{\sigma}(\Omega)$ is the Besov space 
$B_{p,p}^{\sigma}(\Omega)$ 
(this is \cite[Theorem 6.7]{DevSh}). In particular, $W_p^{\sigma}(\Omega) = B_{p,p}^{\sigma}(\Omega)= [W_p^m(\Omega),W_p^{m+1}(\Omega)]_{\delta,p}$ serves as the  $[\delta,p]$ (real) interpolation space between
$W_p^m(\Omega) $ and $W_p^{m+1}(\Omega)$ (see \cite[1.6.2]{Trieb2} for a definition and basic results).

Of particular importance is the fact that, for $2\le p<\infty$ and $m\in \nats$, we have the continuous embedding 
$W_2^m(\Omega)\subset W_p^s(\Omega)$ for all $s\le m - (d/2 - d/p)$.

Throughout the paper, we make the (not unusual) modification 
$W_{\infty}^m(\Omega) = C^m(\overline{\Omega})$ when $p=\infty$ and $m\in \nats$. 
For fractional order spaces when $p=\infty$ (discussed in Section 5),
we use the H{\" o}lder space $C^{s}(\overline{\Omega})$, for which 
$ \max_{|\alpha| = \lfloor s\rfloor} \sup_{x,y\in\overline{\Omega}} \frac{|D^{\alpha}u(x) - D^{\alpha}u(y)|^p}{|x-y|^{\delta}}$ 
is finite for $\delta = s -\lfloor s\rfloor$. In this case,
$W_2^m(\Omega)\subset W_p^s(\Omega)$ for all $s< m - d/2 $.

\subsubsection{Scaling and fractional Sobolev spaces}
For an open set $O\subset \reals^d$, let us introduce the notation $O_R := \{x\mid x/R \in O\}$. The following lemma shows how the fractional Sobolev seminorm scales with $R$. 
\begin{lemma}\label{scaling}
Suppose $1\le p\le\infty$, $s\in [0,\infty)$ and $u\in W_p^s(O_R)$. Let $U:O\to \comps: x\mapsto u(Rx)$. Then
$$ |u|_{W_p^s(O_R)}=  C R^{d/p - s} |U|_{W_p^s(O)}
$$
\end{lemma}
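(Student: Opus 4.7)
The plan is a direct change-of-variables computation, handled in two cases according to whether $s$ is integral or fractional. The map $y\mapsto Ry$ is a diffeomorphism from $O$ onto $O_R$, and the chain rule gives, for any multi-index $\alpha$, the identity $(D^\alpha u)(Ry) = R^{-|\alpha|} D^\alpha U(y)$, which will be the workhorse.

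First I would dispatch the case $s=m\in\nats$ with $1\le p<\infty$. Substitute $x = Ry$, so that $\dif x = R^d \dif y$, inside each term of the Sobolev seminorm:
\begin{equation*}
\int_{O_R} |D^\alpha u(x)|^p \dif x \;=\; \int_O R^{-mp}\,|D^\alpha U(y)|^p \, R^d \dif y \;=\; R^{d-mp}\int_O |D^\alpha U(y)|^p \dif y.
\end{equation*}
Summing over $|\alpha|=m$ against the multinomial weights $\binom{m}{\alpha}$ (which do not depend on $R$) and taking $p$-th roots yields $|u|_{W_p^m(O_R)} = R^{d/p-m}|U|_{W_p^m(O)}$. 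The cases $p=\infty$ (where the $R^d$ from the Jacobian disappears, leaving only the factor $R^{-m}$) and the trivial case $s=0$ are handled similarly, consistent with the convention $1/p=0$.

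Next I would handle fractional $\sigma = m+\delta$ with $0<\delta<1$. The novelty is the Slobodeckij piece; here I would substitute $x = R\tilde x$ and $y = R\tilde y$ simultaneously, picking up a Jacobian factor of $R^{2d}$, and use $|x-y|^{d+p\delta} = R^{d+p\delta}|\tilde x - \tilde y|^{d+p\delta}$ together with the chain-rule identity to get
\begin{equation*}
\int_{O_R}\int_{O_R} \frac{|D^\alpha u(x) - D^\alpha u(y)|^p}{|x-y|^{d+p\delta}} \dif x \dif y \;=\; R^{-mp - d - p\delta + 2d}\int_O\int_O \frac{|D^\alpha U(\tilde x) - D^\alpha U(\tilde y)|^p}{|\tilde x - \tilde y|^{d+p\delta}}\dif \tilde x \dif \tilde y,
\end{equation*}
and the exponent collapses to $d - p(m+\delta) = d-p\sigma$. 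Summing over $|\alpha|=m$ and combining with the already-scaled integer-order part gives $|u|_{W_p^\sigma(O_R)}= R^{d/p-\sigma}|U|_{W_p^\sigma(O)}$. The analogous H\"older case at $p=\infty$ uses a single supremum in place of the double integral and produces the factor $R^{-\sigma}$ directly.

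There is no real obstacle: the only thing to watch is the careful accounting of exponents in the fractional case, where the factor $R^{2d}$ from the two-fold change of variables must balance against $R^{d+p\delta}$ from the denominator and $R^{-mp}$ from the derivatives. With the precise (multinomially-weighted) definitions used in Section~\ref{SS:Sobolev}, the universal constant $C$ in the statement may in fact be taken to equal $1$.
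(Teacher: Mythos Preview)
Your argument is correct and is essentially the same change-of-variables computation the paper gives; the paper in fact only writes out the fractional case and leaves the integer and $p=\infty$ cases to the reader, so your treatment is slightly more detailed. One small remark: since $|u|_{W_p^\sigma}$ for fractional $\sigma$ is defined here as the Slobodeckij term alone, your phrase ``combining with the already-scaled integer-order part'' is unnecessary---the seminorm identity follows directly from the double-integral computation.
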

\begin{proof}
We consider the case $1\le p<\infty$ and $s = k+\delta$, $0<\delta<1$, 
since the cases where $s$ is an integer and $p=\infty$  follow similarly, but are much easier.
 For $RX = x$, the chain rule gives us $D^{\alpha} u(x) = R^{-|\alpha|} U(X)$ and 
\begin{eqnarray*}
\sum_{|\alpha|=k} \int_{O_R} \int_{O_R}\frac{|D^{\alpha}u(x)-D^{\alpha}u(y)|^p}{|x-y|^{d+p\delta}} \dif y\dif x
&=&
R^{d - p\delta - pk} 
\sum_{|\alpha|=k} \int_{O} \int_{O}\frac{|D^{\alpha}U(X)-D^{\alpha}U(Y)|^p}{|X-Y|^{d+p\delta}} \dif Y\dif X\\
&=&
R^{d-ps} |U|_{W_p^s(O)}^p . 
\end{eqnarray*}
\end{proof}

\subsubsection{Sub-additivity and fractional Sobolev spaces}

Carstensen and Faermann \cite{carstensen2001mathematical} have pointed out that the $p$th power 
$|u|_{W_p^{\sigma}(\Omega)}^p$
of the fractional Sobolev seminorm fails to be sub-additive. This is in contrast to the ($p$th power) integral order seminorms,
which are obtained from integrals of non-negative functions, and are easily seen to be sub-additive.

The following lemma is a modification of a result of Faermann (\cite[Lemma 3.1]{Faermann2002}) 
which we use as a tool to treat the issue of non-subadditivity. This will be used in the sequel.

\begin{lemma}\label{sa}
Suppose $\mathcal{V}  = \{\vt_j\mid j\in \cN\}$ is a countable family of subsets $\vt_j\subset  \Omega$
covering $\Omega$ with finite overlap: i.e., $\Omega\subset \bigcup_{j\in \cN} \vt_j$ 
and  there is $M>0$ so that $\max_{x\in \Omega} \sum_{j\in \cN} \chi_{\vt_j}(x) \le M$.
Suppose further that there exist sets $v_j \supset \vt_j$ so that the 
complements $w_j :=  \Omega\setminus v_j$ each 
 are a fixed positive distance from the corresponding sets $\vt_j$: 
 i.e., there  is $H>0$ so that for every $j\in \cN$, $\inf_{x\in \vt_j, y\in w_j} |x - y| \ge H$.
 
Let $1\le p<\infty$ and $s\in (0,\infty)\setminus \mathbb{N}$ with 
$k = \lfloor s\rfloor$ and $\delta = s-\lfloor s\rfloor$. Then for any $u\in W_p^s(\Omega)$ we have
\begin{equation}\label{partial_subadditivity}
\|u\|_{W_p^s(\Omega)}^p 
\le 
\left(
\sum_{j\in \cN} |u|_{W_p^s(v_j)}^p
\right)
+ C M H^{-p\delta} \|u \|_{W_p^k(\Omega)}^p
\end{equation}
\end{lemma}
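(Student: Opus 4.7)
My plan is to neutralize the failure of subadditivity by splitting the Slobodeckij double integral on the pair separation $|x-y|$ rather than directly on the cover $\{\vt_j\}$. Writing $\|u\|_{W_p^s(\Omega)}^p=\|u\|_{W_p^k(\Omega)}^p+|u|_{W_p^s(\Omega)}^p$, the integer-order term already appears on the right-hand side of \eqref{partial_subadditivity}, so only the seminorm requires work. Inserting $\sum_j\chi_{\vt_j}(x)\ge 1$ on the outer variable first and then splitting the inner variable as $\Omega = v_j\cup w_j$ would leave a far-field term on $\vt_j\times w_j$ whose $y$-contribution, after summing in $j$, involves the uncontrolled count $\sum_j\chi_{w_j}(y)$. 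This is precisely the non-subadditivity phenomenon flagged by Carstensen and Faermann, and it is what motivates a distance-first slicing.

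Accordingly, I write $|u|_{W_p^s(\Omega)}^p = I_{\mathrm{near}} + I_{\mathrm{far}}$ according to whether $|x-y|<H$ or $|x-y|\ge H$. For $I_{\mathrm{far}}$ the distance bound decouples the variables: apply $|a-b|^p\le 2^{p-1}(|a|^p+|b|^p)$, use symmetry in $x\leftrightarrow y$, and estimate the inner integral by the radial tail $\int_{\{|x-y|\ge H\}}|x-y|^{-(d+p\delta)}\,\dif y\le c_d(p\delta)^{-1}H^{-p\delta}$ to obtain $I_{\mathrm{far}}\le CH^{-p\delta}|u|_{W_p^k(\Omega)}^p$ with no cover involved. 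For $I_{\mathrm{near}}$, I insert $\sum_j\chi_{\vt_j}(x)\ge 1$ only on the outer variable. The decisive geometric observation is that if $x\in\vt_j$ and $|x-y|<H$, the gap hypothesis $\inf_{\vt_j\times w_j}|x-y|\ge H$ forces $y\in v_j$; combined with $\vt_j\subset v_j$, this confines each near piece to a square $v_j\times v_j$, yielding
$$I_{\mathrm{near}}\le\sum_{|\alpha|=k}\sum_j\int_{v_j}\int_{v_j}\frac{|D^\alpha u(x)-D^\alpha u(y)|^p}{|x-y|^{d+p\delta}}\,\dif y\,\dif x=\sum_j|u|_{W_p^s(v_j)}^p.$$

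Combining the two pieces and adding $\|u\|_{W_p^k(\Omega)}^p$ to both sides recovers \eqref{partial_subadditivity} after absorbing the trivial lower-order constant into the $H^{-p\delta}$ prefactor; the finite-overlap constant $M$ enters the stated bound essentially as a safety factor, and would appear naturally if one preferred to apply the cover symmetrically on both coordinates. The only serious obstacle is the non-subadditivity itself, and once the cut is made at scale $H$ both halves become routine: the far half reduces to a radial integral, the near half to a plain sum of local Slobodeckij seminorms.
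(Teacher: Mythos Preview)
Your argument is correct and is a genuine variant of the paper's proof. The paper first inserts the cover on the outer variable, splits the inner integral as $\Omega=v_j\cup w_j$, and then---crucially---uses the finite-overlap hypothesis to control the far-field sums $\sum_j\chi_{\vt_j\times w_j}$ and $\sum_j\chi_{w_j\times\vt_j}$ by $M\chi_{\{|x-y|>H\}}$ before invoking the radial tail bound. Your distance-first slicing reverses the order: the far part $I_{\mathrm{far}}$ is handled globally with no reference to the cover, and the cover is applied only to $I_{\mathrm{near}}$, where the gap hypothesis immediately forces $y\in v_j$ whenever $x\in\vt_j$.

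The payoff of your route is that the overlap constant $M$ is never actually used: your bound reads $\sum_j|u|_{W_p^s(v_j)}^p+CH^{-p\delta}|u|_{W_p^k(\Omega)}^p+\|u\|_{W_p^k(\Omega)}^p$, which is slightly sharper than \eqref{partial_subadditivity}. (Your remark that $M$ is a ``safety factor'' is accurate; it plays no role in your argument.) The paper's route is perhaps more direct if one thinks of the lemma as a patchwise decomposition, and the use of $M$ there is natural. One small point worth making explicit: absorbing the additive $\|u\|_{W_p^k(\Omega)}^p$ into $CH^{-p\delta}\|u\|_{W_p^k(\Omega)}^p$ requires $H$ to be bounded above (say by $\mathrm{diam}(\Omega)$), which holds here but is tacit in both your write-up and the paper's.
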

\begin{proof}
By sub-additivity of the outer integral,  we  have that 
\begin{eqnarray*}
|u|_{W_p^s(\Omega)}^p  
&\le& 
\sum_{|\alpha|=k}\sum_{j\in \cN} \int_{\vt_j} \int_{\Omega}\frac{|D^{\alpha}u(x)-D^{\alpha}u(y)|^p}{|x-y|^{d+p\delta}} \dif y\dif x\\
&\le&
\sum_{|\alpha|=k}
 \sum_{j\in \cN}
 \left(
 \int_{\vt_j} \int_{v_j}
 \frac{|D^{\alpha}u(x)-D^{\alpha}u(y)|^p}{|x-y|^{d+p\delta}} \dif y\dif x\right. 
 + 
\left. \int_{\vt_j} \int_{w_j}\frac{|D^{\alpha}u(x)-D^{\alpha}u(y)|^p}{|x-y|^{d+p\delta}} \dif y\dif x \right).
\end{eqnarray*}
The first terms are controlled by $\sum_{j=1}^{\infty} |u|_{W_p^s(v_j)}^p$, since $\vt_j\subset v_j$, 
and so this gives the first part of the right hand side of (\ref{partial_subadditivity}).

Consider the sum of the second terms. 
Applying the quasi-triangle inequality $(a+b)^p \le C_{p} (a^p +b^p)$ to the numerator $|D^{\alpha}u(x)-D^{\alpha}u(y)|^p$,
we obtain
\begin{eqnarray*}
\int_{\vt_j} \int_{w_j}\frac{|D^{\alpha}u(x)-D^{\alpha}u(y)|^p}{|x-y|^{d+p\delta}} \dif y\dif x 
&\le& C_p\left(\int_{\vt_j} \int_{w_j}\frac{|D^{\alpha}u(x)|^p}{|x-y|^{d+p\delta}} \dif y\dif x  
+ 
 \int_{\vt_j}  \int_{w_j}\frac{ |D^{\alpha}u(y)|^p}{|x-y|^{d+p\delta}} \dif x\dif y\right) \\
   &=:& J_{j,1} + J_{j,2}.
 \end{eqnarray*}
We have that $\bigcup_{j\in \cN} (\vt_j\times w_j) \subset \{(x,y)\in\Omega^2\mid
 |x-y|>H\}$.
By symmetry, we have also that 
$\bigcup_{j\in \cN} (w_j\times\vt_j) \subset \{(x,y)\in\Omega^2\mid 
|x-y|>H\}$.
Using the finite overlap, 
 we have that
  $$
 \begin{rcases*}
  \sum_{j\in \cN} \raisebox{1pt}{$\chi$}_{[\vt_j\times w_j]}\\
   \sum_{j\in \cN} \raisebox{1pt}{$\chi$}_{[ w_j\times \vt_j]}
   \end{rcases*}
  \le M \ch_{\{(x,y)\in\Omega^2\mid 
  |x-y|>H\}}.
  $$
  Consequently, for any non-negative, integrable $g: \Omega\times \Omega \to \RR$, we have
$$
\begin{rcases*}
\sum_{j\in \cN} \int_{\vt_j}\int_{w_j}g(x,y)\dif x \dif y\\
 \sum_{j\in \cN}\int_{w_j}\int_{\vt_j} g(x,y)\dif x \dif y
 \end{rcases*}
\le 
M \int_{ \{(x,y)\in\Omega^2\mid 
|x-y|>H\}} g(x,y)
$$ 
as well.
Setting $g(x,y) = \frac{|D^{\alpha}u(x)|^p}{|x-y|^{d+p\delta}}$ and applying Fubini - Tonelli 
allows us to control 
 $\sum_{j\in \cN} (J_{j,1} +J_{j,2})$
by
$$
\sum_{j\in \cN} (J_{j,1} +J_{j,2})
\le 2M  
\int_{ \{(x,y)\in\Omega^2\mid 
|x-y|>H\}}
\frac{ |D^{\alpha} u(x)|^p}{|x-y|^{d+p\delta}}
\le  CM  H^{-p\delta}   \int_{\Omega} |D^{\alpha}u(x)|^p  \dif x .
$$
(For the last estimate, we have used the fact that $\int_{|x-y|>H} \frac{1}{|x-y|^{d+p\delta} } \dif y \le C_{p\delta} H^{-pd}$
for all $x$.)
\end{proof}

\subsection{Radial basis functions}\label{S:Kernels}
There are two families of radial basis functions considered in this article: the Mat{\'e}rn functions and the surface splines.
Both families (under the right conditions) admit exponentially decaying basis functions -- 
this is mentioned in Section \ref{SS:Lagrange}.
They also admit rapidly constructed localized basis functions (having polynomial decay) --
this is demonstrated in Sections \ref{SSS:Local_Matern} and \ref{SSS:Local_surface}.
The results we present in Sections \ref{main_results} hold for these families.

Two  features common to both families are:
\begin{enumerate}
\item For any  finite set of points $\Xi\subset \reals^d$ the interpolation problem is well posed. 
This means that for any data $(\xi, y_\xi)_{\xi\in\Xi}$, there exists a unique interpolant $s$ generated by the RBF.
\item The RBF is a reproducing kernel for a (semi-)Hilbert space, called the {\em native space}, 
and the unique interpolant  to $(\xi, y_\xi)_{\xi\in\Xi}$ is 
the {\em best} interpolant in this space: it has the least (semi-)norm among all interpolants to the data.
\end{enumerate}
We include both families (which are in some ways quite similar) because both are often in use, practically. The first is prized for the RBF's rapid decay and strict positive definiteness; the second is included for its dilation invariance and its historical significance. Of course, there are many other prominent families of RBFs, each with its own distinguishing features (some are infinitely smooth, some are compactly supported, etc.). 
Rather than give a broad overview, we introduce the specific families employed in this paper and direct the interested reader to  \cite{Wend} for a comprehensive introduction to RBF theory.
At this point it is unclear if the algorithm for constructing localized bases works for other families; the arguments we employ rely heavily on the RBF's role as the fundamental solution to an elliptic partial differential operator.

\subsubsection{Mat{\'e}rn kernels}  \label{SS:M_kernels}
The Mat{\' e}rn function of order $m>d/2$ is defined as
\begin{equation}
\label{Matern}
\kappa_m:\reals^d  \to \reals:x\mapsto C
K_{m-d/2}(|x|)\,|x|^{m-d/2}.
\end{equation}
Here $C$ is  a constant depending on $m$ and $d$, and $K_{\nu}$ is a Bessel function of the second kind. 

The Mat{\' e}rn function is positive definite, which means that for every finite set $X \subset \reals^d$, the {\em collocation} matrix 
$$\K_{X} :=(\kappa_m(\xi - \zeta))_{\xi,\zeta\in X}$$ 
is strictly positive definite. 

The guaranteed  invertibility of $\K_{X}$ is of use in solving interpolation problems -- given $\bfy\in \reals^{X}$, one finds $\bfa\in\reals^{X}$ 
so that $\K_{X} \bfa = \bfy$. It follows that $\sum_{\xi\in X} a_{\xi} \kappa_m(\cdot - \xi)$ is the unique interpolant to $({\xi},y_{\xi})_{\xi\in X}$ in 
$S(X) := \mathrm{span}_{\xi\in X}\kappa_m(\cdot - \xi)$. 

It is the reproducing kernel for the Hilbert space $\caln(\kappa_m) = W_2^m(\reals^d)$ equipped with the (standard) inner product
$$\langle f,g \rangle_{W_2^m(\reals^d)} 
= 
\int_{\reals^d} \sum_{j=0}^m\begin{pmatrix} m\\ j\end{pmatrix} \langle D^j f(x), D^j g(x) \rangle  \dif x 
= 
\sum_{|\beta|\le m} 
 \begin{pmatrix} m\\ |\beta|\end{pmatrix}  \left( \begin{matrix} |\beta|\\\beta\end{matrix}\right) \int_{\reals^d}  D^{\beta}f(x) D^{\beta} g(x)  \dif x 
$$
where $D^j f$ is the tensor (i.e., the  $j$-dimensional array) of partial derivatives of order $j$. 
Being the reproducing kernel means simply that
$f(x) = \langle f, \kappa_m(x-\cdot)\rangle_{W_2^m(\reals^d)}$ for all $x\in \reals^d$ and all $f\in {W_2^m(\reals^d)}$.
It can be shown that  
among all functions interpolating the data $({\xi},y_{\xi})_{\xi\in X}$,
the interpolant $\sum_{\xi\in X} a_{\xi} \kappa_m(\cdot - \xi)$
 (i.e., where $\bfa$ is the solution of $\K_X \bfa = \bfy$) 
has the smallest
$W_2^m(\reals^d)$ norm.

\subsubsection{Surface splines}\label{SS:ss_kernels}
For $m>d/2$, the {\em surface spline}   is 
\begin{equation}\label{surface_spline}
\phi_m:\reals^d \to \reals:x\mapsto C
\begin{cases} |x|^{2m-d}&d \text{ is odd}\\
|x|^{2m-d}\log |x|&d \text{ is even}.
\end{cases}
\end{equation}

The surface spline of order $m$  is conditionally positive definite (CPD) with respect to $\Pi_{m-1}$, the space of polynomials of degree $m-1$.
This means that for every finite set 
$X \subset \reals^d$, the quadratic form 
$\reals^{X} \to \reals:\a \mapsto \langle \a , \K_{X} \a\rangle = \sum_{\xi \in X}\sum_{\zeta\in X} \phi_m(\xi-\zeta) a_{\xi} a_{\zeta}$ is positive for all nonzero 
$\a\in \reals^{X}$ satisfying
$\sum_{\xi\in X} a_{\xi} p(\xi) = 0$ for all $p\in\Pi_{m-1}$. 
(In other words, it is positive definite on a subspace of $\reals^{X}$ of finite codimension 
(namely, the annihilator of $\Pi_{m-1}\left|_{X}\right.)$. 

One may  solve interpolation problems using the finite dimensional space
$$S( X) := \left\{\sum_{\xi\in X}a_{\xi}\phi_m(\cdot- \xi)\mid \sum_{\xi} a_{\xi} p(\xi) = 0 \text{ for all } p\in\Pi_{m-1}\right\}+ \Pi_{m-1}$$ 
provided that data sites $ X$  are {\em unisolvent}: 
i.e., so that if $p\in\Pi_{m-1}$ satisfies $p(\xi) = 0$ for all $\xi\in X$ then $p=0$. 
Let $\{p_1,\dots,p_N\}$ be a basis 
for $\Pi_{m-1}$ and construct the  
$\# X\times N$ Vandermonde matrix  $\Phi = (p_j(\xi))_{\xi\in X, \, j=1,\dots,N}$.
For data $\bfy\in\reals^{ X}$ one finds $\bfa\in \reals^{ X}$ and $\bfc \in \reals^N$ so that
$$\begin{pmatrix} \K_{ X} & \Phi\\ \Phi^T & 0\end{pmatrix}\begin{pmatrix}\bfa \\ \bfc\end{pmatrix} = \begin{pmatrix} \bfy\\ 0\end{pmatrix}.$$
 It follows that $s_{ X}:= \sum_{\xi\in X} a_{\xi} \phi_m(\cdot-\xi)+ \sum_{j=1}^N c_j p_j$ is the unique interpolant to $({\xi},y_{\xi})_{\xi\in X}$ in $S( X)$.

The surface spline $\phi_m$ is the reproducing kernel 
for the semi-Hilbert space 
$$D^{-m}L_2 = \{f\in C(\reals^d) \mid \forall |\alpha| =m,\,D^{\alpha} f \in L_2(\reals^d)\}$$ (sometimes called
the  Beppo--Levi space),
which is a 
 semi-Hilbert space 
(a  vector space having a semi-definite inner product with nullspace $\Pi_{m-1}$, 
so that $D^{-m}L_2 / \Pi_{m-1}$ is a Hilbert space).
The space $D^{-m}L_2$ is endowed with the semi-definite product
$$\langle f,g\rangle_{D^{-m}L_2} = 
\langle f,g\rangle_{W_p^m(\reals^d)}
=
\int_{\reals^d} \langle D^mf(x), D^m g(x) \rangle  \dif x 
= 
\sum_{\beta = m}    \left( \begin{matrix} m \\\beta\end{matrix}\right) \int_{\reals^d}  D^{\beta}f(x) D^{\beta} g(x)  \dif x .
$$

Although $\phi_m\notin D^{-m}L_2 $ (its $m$th derivatives behave, roughly, like $\mathcal{O}(|x|^{m-d})$, which is not square integrable, since $2m>d$), 
with a little effort, one may show that the spaces $S(X)$ are contained in $D^{-m}L_2 $.
The RBF $\phi_m$ is its reproducing kernel in  the  sense that
for $ X\subset \reals^d$ and two functions $f_1, f_2\in D^{-m}L_2$ 
where  $f_2$ has the form $f_2 = \sum_{\xi \in  X} a_{\xi} \phi(\cdot-\xi) +p\in S(X)$ 
we have $\langle f_1,f_2\rangle_{D^{-m}L_2} = \sum_{\xi\in X}a_{\xi}f_1(\xi)$.
The interested reader will find a material on surface splines and conditionally positive definite RBFs in \cite[Chapter 8]{Wend}.

As in the case of Mat{\' e}rn kernels, the unique interpolant 
residing in $S(X)$
has  the smallest
$D^{-m}L_2$ semi-norm among all interpolants  to the data $({\xi},y_{\xi})_{\xi\in X}$.

\subsubsection{Labeling kernels}\label{SSS:order}
In most cases in this article, the Mat{\'e}rn and surface spline RBFs  exhibit similar
behaviors.
Because our results often depend only on a single parameter $m$ indexing the RBF, 
we use the abbreviated notation $k_m$ to stand for either
 $\kappa_m$ or $\phi_m$.
 
 In both cases, the function $k_m$ has $L_p$ smoothness less than $ 2m-d+d/p$ (i.e., for any bounded 
 set $\Omega$, $k_m \in W_p^{\sigma}(\Omega)$ for all $\sigma<2m-d+d/p$). It follows that any finite linear combination
 of shifts of $k_m$ has the same regularity.
 Denote the space of such linear combinations as
$$
S(X) := \begin{cases}
\mathrm{span}_{\xi\in X}\kappa_m(\cdot - \xi)
&k_m = \kappa_m\\
\left\{\sum_{\xi\in X}a_{\xi}\phi_m(\cdot- \xi)\mid \sum_{\xi} a_{\xi} p(\xi) 
 = 0 \text{ for all } p\in\Pi_{m-1}\right\}+ \Pi_{m-1}
 &
 k_m = \phi_m
 \end{cases}
 $$
 Then we have 
 $$S(X) \subset W_p^{\sigma}(\Omega)\quad\text{ for all }\quad \sigma<2m-d(1-1/p).$$
 
Likewise, we let $\caln(k_m)$ represent either of the two native spaces: $W_2^m(\reals^d)$ or $D^{-m}L_2(\reals^d)$.  
We note that both 
satisfy the continuous embedding 
$W_{2}^m(\reals^d)\subset \caln(k_m)\subset W_{2,loc}^m(\reals^d)$. 
In this case, the functions in the native space have a lower $L_p$ regularity, with
$$\caln(k_m) \subset\begin{cases}   W_p^{s}(\Omega) &\text{ for }\ s\le m -({d}/{2}- {d}/{p})_+, \quad 1\le p<\infty, \\
  C^{s}(\overline{\Omega})\ & \text{ for }\ s< m - d/2.\end{cases}
$$

\section{Lagrange  functions and first Bernstein inequalities}\label{Lagrange_and_Bernstein}

In this section we investigate some further results about the RBF $k_m$; 
namely, we consider analytic properties of the Lagrange functions.
These have been presented in \cite{HNW}, but we explain them below for the sake of completeness.

After this we give a first class of Bernstein estimates, valid for linear combinations of Lagrange functions.
%
%
%
\subsection{Lagrange functions}\label{SS:Lagrange}
For   a  finite $X\subset \reals^d$, there exists a family of (uniquely defined) functions 
$(\chi_{\xi})_{\xi\in X}$ satisfying $\chi_{\xi}\in S(X)$ and $\chi_{\xi}(\zeta) = \delta(\xi,\zeta)$ 
for all $\zeta\in X$. 
We may take the $\caln(k_m)$ inner product of two Lagrange functions $\chi_{\xi},\chi_{\zeta}\in S(X)$, noting
 that they have the form
$\chi_{\xi} =\sum_{\eta\in X} A_{\eta,\xi} k_m(\cdot-\eta)+p$ 
and $\chi_{\zeta} =\sum_{\eta\in X}  A_{\eta,\zeta} k_m(\cdot-\eta)+\tp$
(in the case of Mat{\'e}rn functions $k_m = \kappa_m$, we have $p = \tp = 0$),
to obtain
\begin{equation}\label{inner_product_coefficients}
\langle \chi_{\xi} , \chi_{\zeta} \rangle_{\caln(k_m)}
=\big \langle \chi_{\xi} ,  \textstyle{\sum_{\eta\in X}}A_{\eta,\zeta} k_m(\cdot-\eta)+\tp \big\rangle_{\caln(k)} 
= \sum_{\eta \in X}A_{\eta,\zeta}\chi_{\xi}(\eta)  = A_{\xi,\zeta}. 
\end{equation}

\paragraph{Lagrange function coefficients}
 We can make the following `bump estimate' 
 which uses a bump function 
 $\psi_{\xi,q} = \psi(\frac{\cdot - \xi}{q}):\reals^d\to [0,1]$  that is compactly supported in $B(\xi,q)$ and 
 satisfies $\psi_{\xi,q}(\xi)=1$ on a neighborhood of $q$. We have
\begin{equation}
\label{bump_estimate}
\|\chi_{\xi}\|_{\caln(k_m)}
\le
\|\psi_{\xi,q}\|_{\caln(k_m)}
\le
C \|\psi_{\xi,q}\|_{W_2^{m}(\reals^d)}
 \le Cq^{\frac{d}2-m}.
 \end{equation}
This follows because $\chi_{\xi}$ is the best interpolant to $\zeta\to \delta(\xi,\zeta)$. 
As a consequence, 
Lagrange coefficients are uniformly bounded: 
\begin{equation}\label{basiccoeffs}
|A_{\xi,\zeta}| = |\langle \chi_{\xi},\chi_{\zeta}\rangle_{\caln(k)}|\le C q^{d-2m}.
\end{equation}

\paragraph{Better decay} 
For the kernels considered in this article, and more generally for the framework given in \cite{HNW} and \cite{HNW-p},
 to get desired estimates for Lagrange functions
  over a compact region $\Omega\subset \reals^d$ 
the interpolatory conditions must be satisfied on a point set that is suitably dense
in a fairly large neighborhood of $\Omega$. 
To handle this, we use the quasi-uniform extension $\X$ developed in  Section \ref{bounded_domain}.
This brings us to the definition of $V_{\Xi}$.
\begin{definition}
For a compact set $\Omega\subset \reals^d$ and a finite subset $\Xi\subset \Omega$, let $\X$ be the extension
to $ \{x\in \reals^d\mid \dist(x,\Omega)\le \mathrm{diam}(\Omega)\}$ given in (\ref{finite_extension})
in Section \ref{SS:Extending_points}. Then for
the system of Lagrange functions $(\chi_{\xi})_{\xi\in\X}$ generated by $k_m$ over $\X$, let 
$$V_{\Xi} := \spam \{\chi_{\xi}\mid \xi \in \Xi\}.$$
\end{definition}
\noindent In particular $V_{\Xi}\subset S(\X)$.

For $\xi$ in the original set $\Xi$, we have the improved estimates:
\begin{equation}
\label{lagrange_decay}
\|\chi_{\xi} \|_{W_2^m(\reals^d \setminus B(\xi,R))} 
\le 
C q^{d/2 - m} \mathrm{exp}\left(-\mu \frac{R}{h}\right),\ \text{for all }0<R< \dist(\xi,\partial \Omega).
\end{equation}
This is demonstrated in Appendix \ref{Appendix_A}, specifically in Lemma \ref{appendix_matveev}.
This leads to improved estimates. For $\xi\in\Xi$ and  all $x\in \Omega$
\begin{equation}\label{ptwise}
|\chi_{\xi}(x)| %
\le C \rho^{m-d/2} \mathrm{exp}\left(-\mu \frac{|x-\xi |}{h}\right). 
\end{equation}
Likewise, for $\xi,\zeta\in \Xi$,
\begin{equation}\label{coeff}
|A_{\xi,\zeta}| 
\le 
C q^{d-2m} \mathrm{exp}\left(-{\nu} \frac{|\xi - \zeta|}{h}\right).
\end{equation}

%

\subsection{Stability of the Lagrange-function basis for $V_\Xi$ on $\Omega$} 
Recall that $V_\Xi=\spn \{\chi_\xi\}_{ \xi\in \Xi}$, where $\Xi$ is a subset of all of the centers in $\X$. We begin by defining the operator $T: \comps^\Xi \to V_\Xi$ by $T\bfa = \sum_{\xi\in\Xi} a_{\xi} \chi_{\xi}=:s$. In other words, $T$ is the  \emph{synthesis} operator, which takes a set of coefficients $\{a_\xi \}_{\xi \in \Xi}$ and outputs a function $s\in V_\Xi$. Because the basis in consideration is the Lagrange basis, the coefficients satisfy $T\bfa(\xi) = s(\xi) = a_\xi$ for $\xi\in \Xi$ and therefore, for the basis $\{\chi_\xi\}_{ \xi\in \Xi}$, the operator $T$ is an interpolation operator.

If we use the $\ell_p(\Xi)$ norm for $\comps^\Xi$ and $L_p(\Omega)$ for $V_\Xi$, then the stability of the basis, relative to these norms, is measured by comparing $\|\bfa\|_{\ell_p(\Xi)}$ and $\|s\|_{L_p(\Omega)}$.
We show this with the following proposition, which indicates that if $s\in V_\Xi$ is small (relative to $L_p(\Omega)$) then its coefficients are small in $\ell_p(\Xi)$ 
(and likewise, if the coefficients of $s$ are small in $\ell_p(\Xi)$, so too is the norm of $s\in L_p(\Omega)$).

%
%

\begin{proposition}\label{full_lowercomparison}{\bf(Lagrange Basis Stability)}
There is a constant $h_0 = h_0(m,d)$, so that for 
$\Xi\subset \Omega$ satisfying
$h(\Xi,\Omega)<h_0$  and $1\le p \le \infty$, we have constants  $c_1= c_1(m,\rho,\Omega)>0$ and $c_2=c_2(m,\Omega)$
so that
\begin{equation}\label{full_riesz_ineq}
c_1 \left\| \bfa  \right \|_{\ell_p(\Xis)}     
\le  
q^{-d/p} \| \textstyle{\sum_{\xi\in\Xis}}\,a_{\xi}\chi_{\xi} \|_{L_p(\M)} \le c_2 \rho^{m+d/p} \left\|\bfa \right\|_{\ell_p(\Xi)}.
\end{equation}
\end{proposition}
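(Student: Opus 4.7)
The plan is to establish the two bounds in \eqref{full_riesz_ineq} separately: the upper bound via a Schur-type argument applied to the pointwise Lagrange decay, and the lower bound via the Lagrange reproducing identity $s(\xi)=a_\xi$ combined with an absorption argument valid when $h$ is sufficiently small.

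\textbf{Upper bound.} Starting from the pointwise estimate \eqref{ptwise}, we have $|s(x)|\le C\rho^{m-d/2}\sum_\xi|a_\xi|e^{-\mu|x-\xi|/h}$. We treat this via a Schur argument on the kernel $K(x,\xi)=e^{-\mu|x-\xi|/h}$: the discrete ``column'' sum $\sup_x\sum_\xi K(x,\xi)$ is controlled by \eqref{decreasing} (using $q=h/\rho$ and the fact that the number of centers in an annulus of radius $nq$ grows like $n^{d-1}$), and the ``row'' integral $\sup_\xi\int_\Omega K(x,\xi)\,dx$ is bounded by $Ch^d$. For $p=\infty$ the conclusion follows immediately; for $1\le p<\infty$ we apply H\"older to the weighted sum via the split $K^{1/p}\cdot K^{1/p'}$, integrate over $\Omega$, and use Fubini to swap sum and integral. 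Substituting $h=\rho q$ produces a bound of the form $q^{d/p}$ times a polynomial factor in $\rho$, matching the right-hand side of \eqref{full_riesz_ineq}.

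\textbf{Lower bound.} The identity $s(\xi)=a_\xi$ for every $\xi\in\Xi$ is the starting point. For $p=\infty$ it immediately gives $\|\bfa\|_{\ell_\infty}=\max_\xi|s(\xi)|\le\|s\|_{L_\infty(\Omega)}$. For finite $p$ we use the interior cone condition on $\Omega$ to produce, for each $\xi\in\Xi$, a pairwise disjoint set $E_\xi\subset B(\xi,cq)\cap\Omega$ with $|E_\xi|\gtrsim q^d$. Splitting $s=a_\xi\chi_\xi+\sum_{\zeta\ne\xi}a_\zeta\chi_\zeta$ on $E_\xi$, we show $|\chi_\xi(x)|\ge 1/2$ there via a quantitative continuity argument based on $\|\chi_\xi\|_{\caln(k_m)}\le Cq^{d/2-m}$ from \eqref{bump_estimate} together with a local Sobolev embedding, and we bound the off-diagonal tail $\sum_{\zeta\ne\xi}|a_\zeta||\chi_\zeta(x)|$ via a Schur estimate on the matrix $(e^{-\mu|\xi-\zeta|/h})_{\xi,\zeta\in\Xi}$. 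Raising to the $p$-th power, integrating over $E_\xi$, summing over $\xi$, and choosing $h<h_0$ small enough to absorb the off-diagonal remainder into the left side yields $c_1\|\bfa\|_{\ell_p}\le q^{-d/p}\|s\|_{L_p(\Omega)}$.

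\textbf{Main obstacle.} The hard step is the lower bound. The pointwise estimate \eqref{ptwise} only provides an upper bound on $|\chi_\xi|$, so establishing $|\chi_\xi|\ge 1/2$ on a set of measure $\asymp q^d$ requires an independent continuity argument for $\chi_\xi$ near its node, and the smallness condition $h<h_0$ is what forces the off-diagonal Schur contribution to be strictly less than (say) $1/2$ of $\|\bfa\|_{\ell_p}$ so that the absorption closes. The interior cone condition is essential here: without it, a node $\xi\in\Xi$ close to $\partial\Omega$ could have most of its stability neighborhood outside $\Omega$, breaking the disjoint covering step and hence the lower bound.
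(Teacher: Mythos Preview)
Your upper bound is fine and essentially matches the paper: the paper handles the endpoints $p=1,\infty$ directly from \eqref{ptwise} and then interpolates, which is morally your Schur/H\"older argument.

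The lower bound has a genuine gap in the absorption step. Your off-diagonal control uses only the exponential decay \eqref{ptwise}, giving for $x\in E_\xi\subset B(\xi,cq)$
\[
\sum_{\zeta\ne\xi}|a_\zeta|\,|\chi_\zeta(x)|\le C\rho^{m-d/2}\sum_{\zeta\ne\xi}|a_\zeta|\,e^{-\mu|x-\zeta|/h}.
\]
But the Schur bound on the matrix $\bigl(e^{-\mu|\xi-\zeta|/h}\bigr)_{\xi,\zeta}$ is of order $\sum_{\zeta}e^{-\mu|\xi-\zeta|/h}\sim\sum_{n\ge1}n^{d-1}e^{-\mu n q/h}=\sum_{n\ge1}n^{d-1}e^{-\mu n/\rho}$, a constant depending only on $\rho$. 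The resulting off-diagonal contribution is therefore $C\rho^{m+d/2}q^{d/p}\|\bfa\|_{\ell_p}$, \emph{independent of $h$}; you cannot drive it below the diagonal contribution $\tfrac12 q^{d/p}\|\bfa\|_{\ell_p}$ by shrinking $h$. (Even if you could, $h_0$ would then depend on $\rho$, contradicting the statement $h_0=h_0(m,d)$.)

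The missing idea, which the paper supplies, is to also exploit $\chi_\zeta(\xi)=0$ for $\zeta\ne\xi$ via the same H\"older continuity you invoke for the diagonal term: $|\chi_\zeta(x)|=|\chi_\zeta(x)-\chi_\zeta(\xi)|\le C\rho^{m-d/2}(|x-\xi|/q)^{\epsilon}$ on $B(\xi,\gamma q)$. One then splits the off-diagonal sum at radius $\Gamma q$: the far part ($|\zeta-\xi|\ge\Gamma q$) is handled by exponential decay and made small by choosing $\Gamma=\Gamma(\rho)$ large; the near part ($|\zeta-\xi|<\Gamma q$, at most $K\Gamma^d$ terms) is bounded by $C\rho^{m-d/2}\gamma^{\epsilon}$ and made small by choosing $\gamma=\gamma(\rho,\Gamma)$ small. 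The absorption is achieved by tuning the ball radius $\gamma$ (which feeds into $c_1(m,\rho,\Omega)$), not by tuning $h$; the threshold $h_0$ enters only to validate the H\"older and energy estimates and depends solely on $m,d$.
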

We remark that the dependence in the lower constant $c_1$ on $\rho$  can be made explicit. This is worked out in Lemma \ref{lowercomparison}.
\begin{proof}
The proof is given in Appendix \ref{Appendix_B}. Lemma \ref{uppercomparison} provides the upper bound and  Lemma \ref{lowercomparison} gives the lower bound.
\end{proof}

Another way to think of this inequality is as an $L_p(\Omega)$ Marcinkiewicz-Zygmund (MZ) inequality. 
Such inequalities are used to relate the $L_1$ norm of a trigonometric polynomial to the 
$\ell_1$ norm of the polynomial  evaluated on some fixed, finite set. 
MZ inequalities have also been developed for spherical polynomials on $\mathbb S^d$ \cite{Mhaskar-etal-01-1}. 
For spherical polynomials in $\S^d$, there is another type of inequality, a Nikolskii inequality. 
On $\S^d$, these have the form $\|S\|_{L_p}\le CL^{d(\frac{1}{r}-\frac{1}{p})_+} \|S\|_{L_r(\S^d)}$, 
for any degree $L$ spherical polynomial. Our result below establishes such an inequality for $V_\Xi$.

\begin{corollary}\label{TPS_Nikolskii} {\bf(Nikolskii Inequality)}
With the assumptions and notation of Proposition~\ref{full_lowercomparison}, and with $1\le p,r\le \infty$, we have that 
\begin{equation}\label{TPS_Nikolskii_ineq}
\|s\|_{L_p(\Omega)} \le C
q^{-d(\frac{1}{r}-\frac{1}{p})_+} \|s\|_{L_r(\Omega)}, \ s\in V_\Xi
\end{equation}
with $C = C(m,\rho,\Omega,p,r) $.
\end{corollary}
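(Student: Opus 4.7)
The plan is to reduce to the two-sided Marcinkiewicz--Zygmund estimate of Proposition~\ref{full_lowercomparison} and then to invoke the trivial nesting of the $\ell_p$ spaces. I split into two cases according to the sign of $1/r-1/p$.

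\textbf{Case 1: $r\le p$.} Here the exponent $-d(1/r-1/p)_+ = -d(1/r-1/p)$ is genuinely negative and the inequality is nontrivial. Given $s=\sum_{\xi\in\Xi}a_\xi \chi_\xi\in V_\Xi$, I first use the lower bound in \eqref{full_riesz_ineq} at exponent $r$ to estimate the sequence norm:
\[
\|\bfa\|_{\ell_r(\Xi)}\le c_1^{-1} q^{-d/r}\|s\|_{L_r(\Omega)}.
\]
Next I use the elementary nesting $\|\bfa\|_{\ell_p(\Xi)}\le \|\bfa\|_{\ell_r(\Xi)}$ (valid whenever $r\le p$). Finally I apply the upper bound in \eqref{full_riesz_ineq} at exponent $p$:
\[
\|s\|_{L_p(\Omega)}\le c_2\rho^{m+d/p}q^{d/p}\|\bfa\|_{\ell_p(\Xi)}.
\]
Chaining the three inequalities gives
\[
\|s\|_{L_p(\Omega)}\le \frac{c_2}{c_1}\rho^{m+d/p}\, q^{d/p-d/r}\|s\|_{L_r(\Omega)}=C\, q^{-d(1/r-1/p)}\|s\|_{L_r(\Omega)},
\]
which is the claimed inequality.

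\textbf{Case 2: $p\le r$.} Now $(1/r-1/p)_+=0$, so no $q$-blowup is needed and the inequality reduces to $\|s\|_{L_p(\Omega)}\le C\|s\|_{L_r(\Omega)}$. Because $\Omega$ is bounded (it satisfies an interior cone condition in particular), this is just the standard $L_r\hookrightarrow L_p$ embedding by H\"older's inequality: $\|s\|_{L_p(\Omega)}\le |\Omega|^{1/p-1/r}\|s\|_{L_r(\Omega)}$. No use of $V_\Xi$ structure is needed here.

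The only real work is in Case 1, and it is routine once the stability estimate \eqref{full_riesz_ineq} is in hand; there is no genuine obstacle. The constant $C$ produced depends on $m,d,\Omega,p,r$ and on $\rho$ through the factor $\rho^{m+d/p}$ from the upper MZ inequality and through $c_1(m,\rho,\Omega)$ from the lower one, consistent with the statement $C=C(m,\rho,\Omega,p,r)$.
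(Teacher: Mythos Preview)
Your proof is correct and follows essentially the same route as the paper: both reduce to the two-sided stability estimate \eqref{full_riesz_ineq} and the standard comparison of $\ell_p$ norms. The only cosmetic difference is that the paper treats both cases at once via the uniform inequality $\|\bfa\|_{\ell_p(\Xi)}\le N^{(1/p-1/r)_+}\|\bfa\|_{\ell_r(\Xi)}$ with $N=\#\Xi\sim q^{-d}$ and the identity $x-(x)_+=-(-x)_+$, whereas you split into cases and dispatch the trivial one ($p\le r$) directly by H\"older on the bounded domain $\Omega$.
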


\begin{proof}
Recall that, for $\bfa \in \comps^\Xi$, $\| \bfa \|_{\ell_p(\Xi)} \le N^{(\frac{1}{p}-\frac{1}{r})_+} \| \bfa \|_{\ell_r(\Xi)}$, where $N = \#\Xi$. Since $N \sim q^{-d}$, this inequality implies that $\| \bfa \|_{\ell_p(\Xi)} \le C_{\Omega,r,p}q^{-d(\frac{1}{p}-\frac{1}{r})_+} \| \bfa \|_{\ell_r(\Xi)}$. From this and \eqref{full_riesz_ineq}, we thus have
\[
\|s\|_{L_p(\M)}\le  C_{\rho,m,\Omega,p,r}q^{d\big(\frac{1}{p}-(\frac{1}{p}-\frac{1}{r})_+\big)} \|\bfa \|_{\ell_r(\Xi)} 
\le C_{\rho,m,\Omega,p,r} q^{d\big(\frac{1}{p}-\frac{1}{r} - (\frac{1}{p}-\frac{1}{r})_+\big)}\|s\|_{L_r(\M)}.
\]
The result follows from the identity $x - (x)_+ = -(-x)_+$.
\end{proof}

%
\subsection{Bernstein type estimates for (full) Lagrange functions}\label{full_bernstein}
In this section we will provide a Bernstein (or inverse) theorem relating Sobolev norms of
functions in $V_{\Xi}$ to the corresponding $\ell_p$  norms on the coefficients. 
This is the key to controlling the Sobolev norm of the function in $V_{\Xi}$ by its  $L_p(\Omega)$  norm.

Before proceeding, we first prove two lemmas.

\begin{lemma}\label{zero_scale}
Let $1\le p\le \infty$, $s>d/2$ and $0\le \sigma \le s$. Suppose $O$ is a fixed open set with Lipschitz boundary and as before $O_R = \{x\mid x/R\in O\}$. Suppose further that $W_2^{s}(O)$ is embedded continuously in $W_p^{\sigma}(O)$ (where we take $C^{\sigma}$ in case $p=\infty$).
Then there is a constant $C$ depending on $O$, $p$, $s$ and $\sigma$ so that
if $U\in W_p^{\sigma}(O)$ and if the set $X$ of zeros of $U$ in $O$ 
are sufficiently dense that 
$h(X,O)<1$ and
Lemma \ref{zl} applies then for
 $u(x) = U(x/R)$, we have
$$|u|_{W_p^{\sigma}(O_R)} \le C R^{s-\sigma +d/p- d/2} |u|_{W_2^{s}(O_R)}$$
\end{lemma}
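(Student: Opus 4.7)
\medskip
\noindent\textbf{Proof plan for Lemma \ref{zero_scale}.}
The plan is to pull the inequality back to the fixed, $R$-independent domain $O$ via Lemma \ref{scaling}, prove the corresponding dimensionless estimate $|U|_{W_p^{\sigma}(O)} \le C |U|_{W_2^{s}(O)}$ there, and then push forward again. Applying Lemma \ref{scaling} to both sides of the desired inequality gives
\[
|u|_{W_p^{\sigma}(O_R)} = C R^{d/p-\sigma}|U|_{W_p^{\sigma}(O)}, \qquad |u|_{W_2^{s}(O_R)} = C R^{d/2 - s}|U|_{W_2^{s}(O)},
\]
so the target inequality reduces (after cancelling the common factor $R^{d/p-\sigma}$) to the scale-free claim
\[
|U|_{W_p^{\sigma}(O)} \;\le\; C\, |U|_{W_2^{s}(O)}.
\]

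On the fixed Lipschitz domain $O$, the hypothesised continuous embedding $W_2^{s}(O) \hookrightarrow W_p^{\sigma}(O)$ (or $C^{\sigma}$ when $p=\infty$) immediately gives
\[
|U|_{W_p^{\sigma}(O)} \;\le\; \|U\|_{W_p^{\sigma}(O)} \;\le\; C_{O,s,\sigma,p}\, \|U\|_{W_2^{s}(O)}.
\]
The main step is then to upgrade the full Sobolev norm on the right-hand side to the pure seminorm. This is where the zero set $X\subset O$ enters: because $U$ vanishes on $X$ and the fill distance satisfies $h(X,O)<1$, the hypothesis that Lemma \ref{zl} applies yields a zero-lemma estimate of the form $|U|_{W_2^{k}(O)} \le C\, h(X,O)^{s-k} |U|_{W_2^{s}(O)} \le C |U|_{W_2^{s}(O)}$ for every $0 \le k \le \lceil s \rceil$ (in particular for $k=0$, this controls $\|U\|_{L_2(O)}$). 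Summing these contributions over all intermediate orders that make up $\|U\|_{W_2^{s}(O)}$ yields $\|U\|_{W_2^{s}(O)} \le C |U|_{W_2^{s}(O)}$, which combined with the embedding estimate above proves the dimensionless claim.

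Finally, collecting the scaling factors gives $R^{d/p-\sigma}|U|_{W_p^{\sigma}(O)} \le C R^{d/p-\sigma}|U|_{W_2^{s}(O)} = C R^{s-\sigma + d/p - d/2}\cdot R^{d/2-s}|U|_{W_2^{s}(O)} = C R^{s-\sigma + d/p - d/2}|u|_{W_2^{s}(O_R)}$, which is exactly the stated bound. The only subtle point is ensuring Lemma \ref{zl} can be invoked at the full order $s$ (including the fractional case) to dominate every lower-order seminorm appearing in $\|U\|_{W_2^{s}(O)}$; the condition $s>d/2$ plus the density assumption $h(X,O)<1$ are precisely what make this step available, and everything else in the argument is routine scaling bookkeeping.
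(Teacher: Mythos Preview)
Your proposal is correct and follows essentially the same route as the paper: scale to $O$ via Lemma~\ref{scaling}, apply the assumed Sobolev embedding to get $|U|_{W_p^{\sigma}(O)}\le C\|U\|_{W_2^{s}(O)}$, use the zeros estimate to collapse all lower-order seminorms into $|U|_{W_2^{s}(O)}$, and scale back. The only cosmetic difference is that the paper invokes \cite[Theorem~4.2]{NWW_Bernstein} (a seminorm-to-seminorm zeros estimate) rather than Lemma~\ref{zl} literally, which spares the small absorption argument needed to pass from the norm bound in Lemma~\ref{zl} to the seminorm bound you wrote; your acknowledgement of this as ``the only subtle point'' is apt.
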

\begin{proof}
Lemma \ref{scaling} shows that
 $|u|_{W_p^{\sigma}(O_R)} 
 =
  R^{d/p-\sigma} |U|_{W_p^{\sigma}(O)}
$.
Because $\|U\|_{W_p^{\sigma}(O)} \le C\|U\|_{W_2^{s}(O)}$,
we have that 
$ |u|_{W_p^{\sigma}(O_R)} \le C  R^{d/p-\sigma} \|U\|_{W_2^{s}(O)}
=
C   R^{d/p-\sigma} 
  \left(\sum_{j=0}^k \begin{pmatrix} k\\ j\end{pmatrix}|U|_{W_2^j(O)}^2+ |U|_{W_2^{s}(O)}^2\right)^{1/2}$ 
  (in case $s$ is fractional; we leave the necessary modification to the reader in case $s\in\N$).
  We  now apply the zeros estimate \cite[Theorem 4.2]{NWW_Bernstein} to each term on the right hand side, 
  obtaining
$$ |u|_{W_p^{\sigma}(O_R)} 
=
C   R^{d/p-\sigma} 
 |U|_{W_2^{s}(O)},$$
 since  $|U|_{W_2^j(O)}^2\le C h^{2(s-j)} |U|_{W_2^{s}(O)}^2$ for all $j$.
Applying Lemma \ref{scaling} again,
(this time with $p=2$), 
yields the desired estimate.
\end{proof}

\begin{lemma}\label{sobolev2discrete_bern_single ball}
Suppose  $\Xi$ is sufficiently dense (meaning $h(\Xi,\Omega)\le h_0$ for a constant $h_0 = h_0(d,m)>0$)
and $\eta \in \Xi$. Decompose $\Xi$ into disjoint annuli $\Xi = \bigcup_{j=0}^{\infty}\Xi_j(\eta) $: where
$\Xi_j(\eta) := \{\zeta\in\Xi\mid 2^{j-1} h \le \dist(\zeta,\eta) \le 2^{j} h\}$ for $j>0$ and 
$\Xi_{0}(\eta):= \{\zeta\in\Xi\mid 0\le \dist(\zeta,\eta) \le h\}$.

We have, for $2\le p<\infty$ and
$0\le \sigma \le m -d/2+d/p$ 
that
\begin{equation}\label{LP-Synthesis}
\big\|\textstyle{\sum_{\xi\in \Xi}} a_{\xi} \chi_{\xi}\big\|_{W_p^{\sigma}(B(\eta,3h))}^p
\le
C \rho^{p(m+d/2)-d} 
h^{d-p\sigma }  
\sum_{j=0}^{\infty} 
     2^{j(d+1)(p-1)}e^{-{\nu} p2^{j-1}}
  \sum_{\xi\in \Xi_j(\eta)}|a_{\xi}|^p
\end{equation} 
with $C= C(p,\sigma, m,d)$ and $\nu = \nu(m,d)$.
\end{lemma}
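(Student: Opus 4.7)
The plan is to decompose the coefficient sum along the annuli, writing $s=\sum_{j\ge 0}s_j$ with $s_j:=\sum_{\xi\in\Xi_j(\eta)}a_\xi\chi_\xi$, to estimate each $\|s_j\|_{W_p^\sigma(B(\eta,3h))}^p$ separately (exploiting the exponential decay of those Lagrange functions whose centers lie far from $\eta$), and then to recombine using Minkowski together with a weighted H\"older inequality to recover the weighted sum on the right-hand side.

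For each fixed $j$ I first control the top-order $W_2^m$ seminorm on $B(\eta,3h)$. When $\xi\in\Xi_j(\eta)$ one has $|\xi-\eta|\ge 2^{j-1}h$, so for $j\ge 4$ the ball $B(\eta,3h)$ lies inside $\RR^d\setminus B(\xi,2^{j-3}h)$, and the decay estimate \eqref{lagrange_decay} (applied with $R=2^{j-3}h$, and invoking the extension $\Om$ so the constraint $R<\dist(\xi,\partial\Om)$ is not binding) yields $\|\chi_\xi\|_{W_2^m(B(\eta,3h))}\le Cq^{d/2-m}e^{-\nu 2^{j-1}}$. For the small values $j\le 3$ one uses the bump estimate \eqref{bump_estimate}, absorbed into the same exponential form by enlarging constants. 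Triangle inequality and H\"older (with the standard count $\#\Xi_j(\eta)\le C(\rho\,2^j)^d$) then give
\[
|s_j|_{W_2^m(B(\eta,3h))}\le C\,q^{d/2-m}\,e^{-\nu 2^{j-1}}\,(\rho\,2^j)^{d(1-1/p)}\,\Big(\sum_{\xi\in\Xi_j(\eta)}|a_\xi|^p\Big)^{1/p}.
\]

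Next I convert the $W_p^\sigma$ norm on $B(\eta,3h)$ to the $W_2^m$ seminorm via Lemma~\ref{zero_scale}. The function $s_j$ vanishes on $\X\setminus\Xi_j(\eta)$, so its zero set in $B(\eta,3h)$ is $\X\cap B(\eta,3h)\setminus\Xi_j(\eta)$. Since $\Xi_j(\eta)\cap B(\eta,3h)$ contains only a bounded number of points, rescaling by a fixed multiple $C_0 h$ (rather than $h$ itself) makes the rescaled fill distance of the zero set less than $1$, so the hypotheses of Lemma~\ref{zero_scale} are met. Applying it with $s=m$ to the $W_p^\sigma$ seminorm and to each integer-order seminorm $|s_j|_{W_p^k}$, $k\le\lfloor\sigma\rfloor$, and using $h\le 1$ to retain the dominant power yields
\[
\|s_j\|_{W_p^\sigma(B(\eta,3h))}^p \le C\,h^{p(m-\sigma)+d-pd/2}\,|s_j|_{W_2^m(B(\eta,3h))}^p.
\]
Combining this with the previous estimate and simplifying via $(h/q)^{p(m-d/2)}=\rho^{p(m-d/2)}$ produces the single-annulus bound
\[
\|s_j\|_{W_p^\sigma(B(\eta,3h))}^p \le C\,\rho^{p(m+d/2)-d}\,h^{d-p\sigma}\,2^{jd(p-1)}\,e^{-\nu p\,2^{j-1}}\sum_{\xi\in\Xi_j(\eta)}|a_\xi|^p.
\]

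Finally I sum over $j$. Minkowski gives $\|s\|_{W_p^\sigma(B(\eta,3h))}\le\sum_j\|s_j\|_{W_p^\sigma(B(\eta,3h))}$; raising to the $p$-th power and applying a weighted H\"older inequality with weights $w_j=2^{j(p-1)}$ (which satisfy $\sum_j w_j^{-1/(p-1)}=\sum_j 2^{-j}<\infty$) produces
\[
\Big(\sum_j b_j\Big)^p\le C\sum_j 2^{j(p-1)}\,b_j^p.
\]
The new $2^{j(p-1)}$ multiplies the counting factor $2^{jd(p-1)}$ to give exactly $2^{j(d+1)(p-1)}$, matching the claimed form (with $\nu$ slightly reduced to absorb the weight). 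The main obstacle is the zero-density bookkeeping at small $j$: since $s_j$ need not vanish at the (bounded number of) points of $\Xi_j(\eta)$ that lie in $B(\eta,3h)$, Lemma~\ref{zero_scale} has to be applied after a mild enlargement of the rescaling factor, which only affects absolute constants.
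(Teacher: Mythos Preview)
Your argument is correct and follows the same dyadic–annular strategy as the paper, but with two organizational differences worth noting.

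First, the paper obtains the factor $2^{j(d+1)(p-1)}$ by iterating the quasi-triangle inequality $\|A+B\|^p\le 2^{p-1}(\|A\|^p+\|B\|^p)$ across the annuli (yielding $2^{(j+1)(p-1)}$) and then using the generalized quasi-triangle inequality $\|\sum_1^n A_j\|^p\le n^{p-1}\sum\|A_j\|^p$ with $n=\#\Xi_j(\eta)\le C\rho^d 2^{jd}$ inside each annulus. Your route via Minkowski followed by weighted H\"older with weights $2^{j(p-1)}$ produces the same factor; neither is simpler, and your parenthetical about reducing $\nu$ is unnecessary since the weight is polynomial while the gain is doubly exponential.

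Second, and more substantively, the paper applies the zeros estimate to each individual Lagrange function $\chi_\xi$ rather than to the partial sum $s_j$. Concretely, it uses $B(\eta,3h)\subset\Om\setminus B(\xi,R-3h)$ and invokes the zeros lemma on the punctured domain, where $\chi_\xi$ vanishes on all of $\X$. Since only the single point $\xi$ is missing from the zero set, density is never an issue. Your choice to apply Lemma~\ref{zero_scale} to $s_j$ forces you to confront the fact that for $j\le 2$ the annulus $\Xi_j(\eta)$ removes a whole annular shell of points from $\X\cap B(\eta,3h)$, not merely ``a bounded number''; the fill distance of the remaining zeros can be as large as $\sim 3h$, not $h$. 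This is fixable (enlarge the ball to $B(\eta,C_0 h)$ with $C_0$ large enough that the rescaled fill distance drops below the threshold, then control $|s_j|_{W_2^m}$ on the larger ball via the bump estimate), but it is more bookkeeping than your last paragraph suggests. The paper's choice of applying the zeros lemma to $\chi_\xi$ individually sidesteps this entirely.
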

\begin{proof}
Repeatedly applying the quasi-triangle inequality $\|a+b\|^p \le 2^{p-1}( \|a\|^p +\|b\|^p)$  to this sum gives
$$
\big\|\sum_{\xi\in \Xi} a_{\xi} \chi_{\xi}\big\|_{W_p^{\sigma}(B(\eta,3h))}^p
\le 
 \sum_{j=0}^{\infty} 2^{(j+1)(p-1)} 
\big\|\sum_{\xi\in \Xi_j(\eta)} a_{\xi} \chi_{\xi}(x)\big\|_{W_p^{\sigma}(B(\eta,3h))}^p.
$$
Observe that $\#\Xi_j(\eta)  \le\omega_d \rho^d 2^{jd}$ 
(where the constant $\omega_d$ depends on $d$), 
so the generalization of the above quasi-triangle inequality 
$\|\sum_{j=1}^na_j\|^p \le n^{p-1}\sum_{j=1}^n \|a_j\|^p $ 
gives
\begin{equation}\label{quasi-triangle}
\big\|\sum_{\xi\in \Xi} a_{\xi} \chi_{\xi}\big\|_{W_p^{\sigma}(B(\eta,3h))}^p
\le 
 \sum_{j=0}^{\infty} 
    2^{(j+1)(p-1)} 
    (\omega_d
    \rho^{d}
    2^{jd})^{p-1}
    \sum_{\xi\in \Xi_j(\eta)}
      |a_{\xi}|^p \left\| \chi_{\xi}\right\|_{W_p^{\sigma}(B(\eta,3h))}^p.
\end{equation}
For $\dist(\xi,\eta)=R$ sufficiently large, 
we have the inclusion $B(\eta,3h)\subset \Om \setminus B(\xi,R-3h) $.
Applying the zeros lemma  \cite[Theorem 1.1]{NWW}  gives
$$
\left\| \chi_{\xi} \right\|_{W_p^{\sigma}(B(\eta,3h))} 
\le 
\left\| \chi_{\xi} \right\|_{W_p^{\sigma}(\Om \setminus B(\xi,R-3h))}
\le 
C h^{m-\sigma - (d/2 -d/p)_+} \left\| \chi_{\xi}\right\|_{W_2^{m}(\Om\setminus B(\xi,R-3h))}.
$$
Applying the energy estimate (\ref{lagrange_decay}) and noting that $d/2 - d/p\ge 0$
 gives us
$$\left\| \chi_{\xi} \right\|_{W_p^{\sigma}(B(\eta,3h))} \le C h^{m-\sigma - (d/2 -d/p)_+} q^{d/2 -m} e^{-\nu R/h}=
C \rho^{m-d/2} h^{d/p-\sigma }  e^{-\nu R/h}
.$$
We note that for $\xi$ in the annular set $ \Xi_j(\eta)$, $\dist(\xi,\eta)=R\ge h2^{j-1}$, so
$e^{-\nu R/h}\le   e^{-\frac{\nu}{2} 2^{j}}$.
Applying this to (\ref{quasi-triangle}) gives
\begin{eqnarray}
\big\|\sum_{\xi\in \Xi} a_{\xi} \chi_{\xi}\big\|_{W_p^{\sigma}(B(\eta,3h))}^p
&\le& 
C \omega_d^{p-1}\rho^{p(m-d/2)+d(p-1)} 
\sum_{j=0}^{\infty} 
    2^{j(d+1)(p-1)}\sum_{\xi\in \Xi_j(\eta)}|a_{\xi}|^p
h^{d-p\sigma}  e^{-\frac{\nu}{2} p2^{j}}\nonumber\\
&\le& 
C \rho^{p(m+d/2)-d} 
h^{d-p\sigma }  
\sum_{j=0}^{\infty} 
     2^{j(d+1)(p-1)}e^{-\frac{\nu}{2} p2^{j}}
    \sum_{\xi\in \Xi_j(\eta)}|a_{\xi}|^p
\end{eqnarray}
\end{proof}

Note that when $p=\infty$, we use
only integer smoothness $\sigma =k \in \ints$ and  the standard space $C^k(\overline{\Omega})$ of $k$ times integral functions over $\overline{\Omega}$.
%
%
%
\begin{theorem}\label{sobolev2discrete_bern}
For a sufficiently dense set $\Xi$ (meaning $h(\Xi,\Omega)\le h_0$ for a constant $h_0 = h_0(d,m)>0$)
we have, for 
$0\le \sigma \le m -(d/2-d/p)_+$ when $1\le p< \infty$ (or $\sigma \in \nats$ with  $0\le \sigma<m - d/2$ if $p=\infty$),
\begin{equation}\label{LP-Synthesis}
\big\|\textstyle{\sum_{\xi\in \Xi}} a_{\xi} \chi_{\xi}\big\|_{W_p^{\sigma}(\Omega)} 
\le 
C \rho^{m+d/2+d/p} h^{d/p -\sigma } \left\|\bfa \right\|_{\ell_p(\Xi)}
\end{equation} 
with $C= C(p,\sigma, m,d)$.
\end{theorem}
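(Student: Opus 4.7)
The plan is to localize, covering $\Omega$ by $\Xi$-centered balls of radius comparable to $h$ so that the one-ball estimate of Lemma~\ref{sobolev2discrete_bern_single ball} applies on each piece. Concretely, with $h=h(\Xi,\Omega)$ I would take the cover $\{B(\eta,3h)\}_{\eta\in\Xi}$: since $h$ is the fill distance these balls cover $\Omega$, and since $q=h/\rho$ controls the separation, every point of $\Omega$ lies in at most $M\le C\rho^d$ of them.

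For integer $\sigma$ and $1\le p<\infty$, the $p$-th power of the Sobolev seminorm is the integral of a nonnegative function, so it is subadditive under restriction and
$$\|s\|_{W_p^\sigma(\Omega)}^p \le \sum_{\eta\in\Xi}\|s\|_{W_p^\sigma(B(\eta,3h))}^p.$$
Applying Lemma~\ref{sobolev2discrete_bern_single ball} on each ball and interchanging the orders of summation in $\sum_{\eta}\sum_{\xi\in\Xi_j(\eta)}|a_\xi|^p$ is the decisive step. For each fixed $\xi$, the number of $\eta\in\Xi$ with $\xi\in\Xi_j(\eta)$ equals the number of $\Xi$-points in the annulus $2^{j-1}h\le|\xi-\eta|\le 2^j h$, which is bounded by $C\rho^d 2^{jd}$. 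The resulting $j$-series $\sum_{j\ge 0} 2^{j[(d+1)(p-1)+d]}e^{-\nu p\,2^{j-1}}$ converges by dominance of the exponential, and after taking $p$-th roots one obtains $\|s\|_{W_p^\sigma(\Omega)}\le C\rho^{m+d/2}h^{d/p-\sigma}\|\mathbf{a}\|_{\ell_p(\Xi)}$, which is absorbed into the stated $\rho^{m+d/2+d/p}$ since $\rho\ge 1$.

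For fractional $\sigma=k+\delta$ with $1\le p<\infty$, the seminorm is no longer subadditive, so I would invoke Lemma~\ref{sa} with $\vt_\eta=B(\eta,3h)$ and $v_\eta=B(\eta,4h)$, whence $H=h$ and $M\le C\rho^d$. The first contribution $\sum_\eta|s|_{W_p^\sigma(B(\eta,4h))}^p$ is treated exactly as above. The far-field correction $CMh^{-p\delta}\|s\|_{W_p^k(\Omega)}^p$ is then closed off by the already-established integer case at $k=\lfloor\sigma\rfloor$: the prefactor $h^{-p\delta}$ combines with the integer scaling $h^{d-pk}$ to produce exactly $h^{d-p\sigma}$. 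The endpoint $p=\infty$ falls outside Lemma~\ref{sobolev2discrete_bern_single ball}, but admits a direct pointwise argument: the derivative version of \eqref{ptwise}, $|D^\alpha\chi_\xi(x)|\le C\rho^{m-d/2}h^{-|\alpha|}e^{-\mu|x-\xi|/h}$, together with \eqref{decreasing}, yields $\|D^\alpha s\|_{L_\infty(\Omega)}\le C\rho^{m+d/2}h^{-|\alpha|}\|\mathbf{a}\|_{\ell_\infty}$.

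The hard part will be the fractional case: because the remainder produced by Lemma~\ref{sa} is a global $W_p^k$-norm rather than a localized quantity, the argument only closes via a self-referential reduction to the integer-order Bernstein inequality, forcing induction on $\lfloor\sigma\rfloor$. The enlargement $v_\eta\supset\vt_\eta$ must be chosen with $H\sim h$, otherwise the $h^{-p\delta}$ correction will not recombine with the integer-case scaling to reproduce $h^{d-p\sigma}$. Careful tracking of the $\rho^d$ multiplicities from the cover and the annular counts is then needed to keep the final $\rho$-dependence within $\rho^{m+d/2+d/p}$.
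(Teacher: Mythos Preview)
Your plan is essentially the paper's argument for $2\le p<\infty$ and for $p=\infty$: the cover by $\Xi$-centered balls, the single-ball estimate, the swap of summation with annular counting, and Lemma~\ref{sa} for the fractional case all match (the paper uses radii $h$ and $3h$ where you use $3h$ and $4h$, which is inessential). The $p=\infty$ pointwise argument is also the paper's.

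The gap is the range $1\le p<2$. Lemma~\ref{sobolev2discrete_bern_single ball} is stated and proved only for $2\le p<\infty$; its proof applies the zeros estimate in the form $\|\chi_\xi\|_{W_p^\sigma}\le C h^{m-\sigma-(d/2-d/p)_+}\|\chi_\xi\|_{W_2^m}$ on the complement $\Om\setminus B(\xi,R-3h)$, and the subsequent manipulation $h^{m-\sigma-(d/2-d/p)_+}q^{d/2-m}=\rho^{m-d/2}h^{d/p-\sigma}$ uses $d/2-d/p\ge 0$. For $p<2$ this identity fails, and you cannot recover the missing factor $h^{d(1/p-1/2)}$ by H\"older on the \emph{unbounded-in-$h$} annular region where the zeros lemma is applied. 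So invoking Lemma~\ref{sobolev2discrete_bern_single ball} for all $1\le p<\infty$ is not justified as written.

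The paper circumvents this by treating $p=1$ with a completely different argument: the triangle inequality $\|s\|_{W_1^\sigma}\le\|\bfa\|_{\ell_1}\max_\xi\|\chi_\xi\|_{W_1^\sigma(\Omega)}$ reduces matters to a single Lagrange function, whose $W_1^\sigma$ norm is estimated by splitting into $B(\xi,Kh)$ and its complement, using H\"older on each annulus $A_\ell$ (where the volume factor \emph{is} $\sim h^{d/2}$), the zeros estimate, and the energy decay. The fractional $p=1$ case again uses Lemma~\ref{sa}, but now with $\xi$-centered annuli rather than $\Xi$-indexed balls. Finally $1<p<2$ is obtained by Riesz--Thorin interpolation of the synthesis operator between the $p=1$ and $p=2$ endpoints. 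You should either adopt this two-endpoint-plus-interpolation strategy, or else prove and state a $p<2$ analogue of the single-ball lemma (applying H\"older on $B(\eta,3h)$ \emph{before} enlarging to the annular region) as a separate step.
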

%
%
%
\begin{proof}
This is handled in four cases: $p=\infty$, $2\le p<\infty$, $p=1$ and $1 < p<2$.

\paragraph{Case 1: $p=\infty$}
If $\sigma\in\ints$, we simply need to bound 
$\max_{|\alpha|=\sigma}\max_{x\in\Omega} \sum_{\xi\in \X} |D^{\alpha}\chi_{\xi}(x)|_{\infty}$. 
To do this, consider a point $x\in\Omega$ and a ball 
$B(x,r)\subset \Om$ with $r = h\max(16 m^2, 1/h_1)$ and $h_1$ is the constant from
the zeros lemma \ref{zl}. 
In this case, we use a Bramble-Hilbert argument involving the averaged Taylor polynomial 
$Q^m \chi_{\xi}$ of degree $m-1$ described in Brenner-Scott \cite{Bren}. 
It follows from \cite[(3.9)]{HNW}
that 
$\|D^{\alpha}Q^m \chi_{\xi}\|_{L_{\infty}(B(x,r))} \le C r^{m-|\alpha|-d/2} |\chi_{\xi}|_{W_2^m(B(x,r))} $.
Likewise, we can estimate 
$\|D^{\alpha}(\chi_{\xi} - Q^m \chi_{\xi})\|_{L_{\infty}(B(x,r))}$ 
by first using Lemma \ref{scaling} (with $U((y-\xi)/r))  = \chi_{\xi}(y) - Q^m \chi_{\xi}(y)$ 
and the  embedding
 $W_2^{m-|\alpha|}(B(0,1))\subset C(B(0,1))$ 
 to obtain
 $$|(\chi_{\xi} - Q^m \chi_{\xi})|_{C^{|\alpha|}(B(x,r))}
 \le C r^{-|\alpha|}|U|_{C^{|\alpha|}(B(0,1))}
 \le C r^{-|\alpha|}|U|_{W_2^m(B(0,1))}$$
 Rescaling, 
 gives the estimate 
$$\|D^{\alpha}(\chi_{\xi} - Q^m \chi_{\xi})\|_{L_{\infty}(B(x,r))}
\le 
C r^{-|\alpha|}
\left(\sum_{j=0}^{m} r^{2j-d}\begin{pmatrix}m\\j\end{pmatrix} |\chi_{\xi} - Q^m \chi_{\xi}|_{W_2^{j}(B(x,r))}^2\right).
$$
Each seminorm in this last expression can be estimated by the Bramble-Hilbert Lemma, allowing us to 
 bound the above by
$ C r^{m-|\alpha|-d/2} |\chi_{\xi}|_{W_2^{m}(B(x,r))}.$
Together with the estimate on $D^{|\alpha|}Q^m \chi_{\xi}$, and recalling that
$r = K h$, we have
$$
|D^{\alpha} \chi_{\xi} (x)| \le \|D^{\alpha}Q^m \chi_{\xi}\|_{L_{\infty}(B(x,r))} + \|D^{\alpha}(\chi_{\xi} - Q^m \chi_{\xi})\|_{L_{\infty}(B(x,r))}\le C h^{m-|\alpha|-d/2} |\chi_{\xi}|_{W_2^{m}(B(x,r))}.
$$
From here, we apply the energy estimate (\ref{lagrange_decay}) to obtain 
\begin{equation}
\label{derivative_lagrange_decay}
|D^{\alpha}\chi_{\xi}(x)|
\le 
C h^{m-|\alpha|-\tfrac{d}2} q^{\tfrac{d}2-m} e^{-\nu \frac{|x-\xi|}{h}}
=
C \rho^{m-\tfrac{d}2} h^{-|\alpha|}  e^{-\nu \frac{|x-\xi|}{h}}.
\end{equation}

The sum over $\Xi$ can be carried out over annular regions 
$\Xi_j(x) = \{\xi\in \Xi \mid jh\le \dist(\xi,x)< (j+1)h\}$. 
This leaves
$$ 
 \sum_{j=0}^{\infty} \sum_{\xi\in \Xi_j(x)} |D^{\alpha}\chi_{\xi}(x)|
\le 
C \rho^{m-\tfrac{d}2} h^{-|\alpha|}   \sum_{j=0}^{\infty} \sum_{\xi\in \Xi_j(x)}  e^{-\nu \frac{|x-\xi|}{h}}
\le 
C \rho^{m-\tfrac{d}2} h^{-|\alpha|}   \sum_{j=0}^{\infty} \rho^d (j+1)^d e^{-\nu j}
\le 
C\rho^{m+\tfrac{d}2} h^{-|\alpha|}.
$$
In the last inequality, we use the fact that the sum $ \sum_{j=0}^{\infty} (j+1)^d e^{-\nu j} =C$
depends on $d$ and $m$ (but not $\rho$).
\paragraph{Case 2: $2\le p<\infty$}
We treat this case in two stages. At first, we consider $\sigma=k\in \N$, treating fractional Sobolev exponents for later. 

\subparagraph{Case 2i: $\sigma=k\in\N$.} By subadditivity, the Sobolev norm may be taken over overlapping balls 
$$
\big\|\sum_{\xi\in \Xi} a_{\xi} \chi_{\xi}\big\|_{W_p^{k}(\Omega)}^p
\le 
\sum_{\eta\in\Xi}   \big\|\sum_{\xi\in \Xi} a_{\xi} \chi_{\xi}(x)\big\|_{W_p^{k}(B(\eta,h))}^p.
$$
Applying Lemma \ref{sobolev2discrete_bern_single ball} to the norm over each ball $B(\eta,h)\subset B(\eta,3h)$
gives 
\begin{equation*}
\big\|\sum_{\xi\in \Xi} a_{\xi} \chi_{\xi}\big\|_{W_p^{k}(\Omega)}^p
\le
C \rho^{pm+pd/2-d} 
h^{d-p\sigma }  
\sum_{j=0}^{\infty} 
     2^{j(d+1)(p-1)}e^{-{\nu} p2^{j-1}}
\sum_{\eta\in\Xi}  
  \sum_{\xi\in \Xi_j(\eta)}|a_{\xi}|^p
\end{equation*} 
We may exchange  summation between $\xi$ and $\eta$, 
noting that 
$\eta\in\Xi_j(\xi)$ iff $\xi\in\Xi_j(\eta)$. This implies the estimate
$\sum_{\eta\in\Xi}  \sum_{\xi\in \Xi_j(\eta)}|a_\xi|^p 
= 
\sum_{\xi\in \Xi}\sum_{\eta\in \Xi_j(\xi)}|a_\xi|^p\le \omega_d \rho^d 2^{jd}\sum_{\xi\in \Xi}|a_\xi|^p$.
 Consequently, 
\begin{equation}\label{Case2_integer_case}
\big\|\sum_{\xi\in \Xi} a_{\xi} \chi_{\xi}\big\|_{W_p^{k}(\Omega)}^p
\le 
C \omega_d^{p-1}\rho^{p(m+d/2)} h^{-pk +d}  
\bigg( \sum_{j=0}^{\infty} 2^{j(d+1)p}e^{-\frac{\nu}{2} p2^{j}}\bigg)\sum_{\xi\in \Xi}|a_{\xi}|^p.
\end{equation}
The result follows by summing the series and taking the $p$th root.

\subparagraph{Case 2ii: $\sigma\notin\N$.}Let $\sigma = k+\delta$ with $0<\delta<1$, and  employ Lemma \ref{sa}, 
using the neighborhoods $\{B(\eta,h)\cap \Omega \mid \eta \in \Xi\}$ as $\{\vt_j\mid j\in\cN\}$
and $\{B(\eta,3h)\cap \Omega \mid \eta\in \Xi\}$ for $\{v_j\mid j \in \cN\}$. Note that for this choice of cover, $M \le C \rho^d$.
Indeed, 
for any $x\in \Omega$, enumerate the centers $\{\xi \in \Xi \mid \dist(\xi,x)<h\}$  as $\xi_1,\dots, \xi_{n}$. Then
$x\in B(\xi_j,h)$ for each $j=1\dots n$. 
Because the  balls $B(\xi_j,q) $ are disjoint, 
it follows that
$n ( c_{d,1} q^d)  = \mathrm{vol}( \bigcup_{j=1}^n B(x_j,q) \cap B(x,h))\le c_{d,2} h^d$, so $n   \le C_d (h/q)^d$.
(Here $c_{d,2}h^d$ is the volume of the ball of radius $h$, and $c_{d,1}q^d$ is the volume of the
portion in $B(x,h)$ of any ball which is centered in $B(x,h)$.)
This guarantees that 
$$
\big\|\sum_{\xi\in \Xi} a_{\xi} \chi_{\xi}\big\|_{W_p^{\sigma}(\Omega)}^p
\le 
\sum_{\eta\in\Xi}   \big\|\sum_{\xi\in \Xi} a_{\xi} \chi_{\xi}\big\|_{W_p^{\sigma}(B(\eta,3h))}^p
+
C\rho^d h^{-p\delta} \|\sum_{\xi\in \Xi} a_{\xi} \chi_{\xi}\big\|_{W_p^{k}(\Omega)}^p
.$$

We apply (\ref{Case2_integer_case}) to bound the second term, which gives 
$$
C\rho^d h^{-p\delta} \|\sum_{\xi\in \Xi} a_{\xi} \chi_{\xi}\big\|_{W_p^{k}(\Omega)}^p \le 
C \rho^{p(m+d/2)+d} h^{-pk +d - \delta} |a_{\xi}|^p.
$$
The first term is handled precisely as the integer case $\sigma=k$, which has  been discussed above,
leaving 
$$\sum_{\eta\in\Xi}   \big\|\sum_{\xi\in \Xi} a_{\xi} \chi_{\xi}\big\|_{W_p^{\sigma}(B(\eta,3h))}^p \le 
C \rho^{p(m+d/2)} h^{d/p -\sigma } \left\|\bfa \right\|_{\ell_p(\Xi)}.
$$
\paragraph{Case 3: $p=1$}
We again consider the proof in two steps, first for the  case of integer smoothness, 
where the Sobolev norm is sub-additive on sets, and then in the fractional case, 
where we can apply  Lemma \ref{sa}.\footnote{In this case, because $0\le \sigma\le m$, 
one could just as easily adopt the strategy of proving the result for the extrema 
$\sigma=0$ and $\sigma=m$, and then using interpolation of operators to bound the  synthesis 
operator $T: \ell_1(\Xi) \to W_1^{\sigma}(\Omega)$, noting that $W_1^{\sigma}(\Omega)$ is the  
$(\sigma/m,1)$ interpolation space between $L_1(\Omega)$ and $W_1^m(\Omega)$.}

As an initial simplification, note that the triangle inequality gives
$
\|s\|_{W_1^{\sigma}(\Omega)}
\le 
\|\bfa \|_{\ell_1(\Xi)} (\max_{\xi\in\Xi}\|\chi_{\xi}\|_{W_1^{\sigma}(\Omega)})
$, so we need only to consider the size of $\|\chi_{\xi}\|_{W_1^{\sigma}(\Omega)}$.

\subparagraph{Case 3i: $\sigma=k\in\N$.} We proceed, as in Case 2, by first considering $\sigma=k\in \N$ and using subadditivity of the norm.
For any integer $K$, we have 
$
\|\chi_{\xi}\|_{W_1^{k}(\Omega)}
\le 
\|\chi_{\xi}\|_{W_1^{k}(B(\xi,Kh))} + \|\chi_{\xi}\|_{W_1^{k}(\Omega\setminus B(\xi,Kh))}
$.

The first term satisfies
$\|\chi_{\xi}\|_{W_1^{k}(B(\xi,Kh))}
\le 
\omega_d (Kh)^{d/2}  \|\chi_{\xi}\|_{W_2^{k}(B(\xi,Kh))}
$. 
For $K$ sufficiently large (a constant depending only on $d$), the zeros estimate \cite[Theorem 4.2]{NWW_Bernstein} gives  
\begin{equation} \label{Case3_integer_case_inside}
\|\chi_{\xi}\|_{W_1^{k}(B(\xi,Kh))}
\le 
\omega_d (Kh)^{d/2}  h^{m-k}\|\chi_{\xi}\|_{W_2^{m}(\Om)}
\le 
C K^{d/2} \rho^{m-d/2}h^{d-k}
\end{equation}

The second term may be controlled by decomposing 
$\Omega\setminus B(\xi,Kh) = \bigcup_{\ell=K}^{\infty} A_\ell$ 
en annuli
(i.e., by taking $A_\ell := \{x\in \Omega\mid \ell h\le \dist(x,\xi)\le (\ell+1)h\}$). Subadditivity gives
\begin{eqnarray*}
\|\chi_{\xi}\|_{W_1^{k}(\Omega\setminus B(\xi,Kh))}
&\le &
\sum_{\ell=K}^{\infty} \|\chi_{\xi}\|_{W_1^{k}(A_\ell)}\\
&\le &
\sum_{\ell=K}^{\infty} (\mathrm{vol}(A_\ell))^{1/2}\|\chi_{\xi}\|_{W_2^{k}(A_\ell)}\\
&\le &
\sum_{\ell=K}^{\infty} C ((\ell+1)h)^{d/2}   h^{m-\sigma}  \|\chi_{\xi}\|_{W_2^{m}(A_\ell)}
\end{eqnarray*}
In the final line we have applied the zeros estimate 
(and simultaneously estimated the volume of the annulus $A_\ell$).
At this point, we can apply the energy estimate  (\ref{lagrange_decay}) to obtain
\begin{equation}\label{Case3_integer_case_outside}
\|\chi_{\xi}\|_{W_1^{k}(\Omega\setminus B(\xi,Kh))}
\le
\sum_{\ell=K}^{\infty} C ((\ell+1)h)^{d/2}  h^{m-\sigma} q^{d/2-m} e^{-\nu \ell}
\le
C \rho^{m-d/2} h^{d-\sigma}.
\end{equation}
Combining (\ref{Case3_integer_case_outside}) with 
(\ref{Case3_integer_case_inside}), gives the desired result for $\sigma =k\in \N$. 

\subparagraph{Case 3ii: $\sigma\notin\N$.}  
To handle the fractional case $\sigma = k+\delta$, 
we apply Lemma \ref{sa}, 
with an initial decomposition $\tilde{v}_1 = B(\xi,Kh)$, $\tilde{v}_2 = \Omega\setminus B(\xi,Kh)$,
 $v_1 = B(\xi,(K+1)h)$ and $v_2 = \Omega\setminus B(\xi,(K-1)h)$.
 Observe that these are disjoint, so the overlap constant is $M=1$. Thus we have
$$|\chi_{\xi}|_{W_1^{\sigma}(\Omega)}
\le 
|\chi_{\xi}|_{W_1^{\sigma}(B(\xi,(K+1)h))} + 
|\chi_{\xi}|_{W_1^{\sigma}(\Omega\setminus B(\xi,(K-1)h))} + 
C h^{-\delta}\|\chi_{\xi}\|_{W_1^{k}(\Omega)}.
$$
We can further decompose the middle term $|\chi_{\xi}|_{W_1^{\sigma}(\Omega\setminus B(\xi,(K-1)h))} $
en annuli 
by applying Lemma \ref{sa} a second time.
This time, we let 
$\tilde{v}_{\ell} :=  
\{x\in \Omega\mid 2^\ell (K-1) h\le \dist(x,\xi) < 2^{\ell+1}(K-1)h\}$ for $\ell = 0, 1, \dots$.
The annuli $\{\tilde{v}_{\ell}\mid \ell\in \N\}$ partition
$\Omega\setminus B(\xi,(K-1)h)$, so the overlap constant $M$ is $M=1$; in fact,
we need only the first $\ell_0 = 1+ \log_2(\diam(\Omega)/(K-1)h)$ annuli.

Define the  neighborhoods of $\vt_{\ell}$ as  $v_{\ell}  :=  \{x\in \Omega\mid 2^{\ell-1}(K-1) h\le \dist(x,\xi) < 2^{\ell+2}(K-1)h\}$. 
The sets $w_{\ell} = \Omega \setminus v_{\ell}$ 
satisfy $\dist(\tilde{v}_{\ell},w_{\ell})\ge \frac12(K-1)h \ge h$. 
Lemma \ref{sa} shows that
$$
 |\chi_{\xi}|_{W_1^{\sigma}(\Omega\setminus B(\xi,(K-1)h))}
 \le 
\left(\sum_{\ell=0}^{\ell_0}  |\chi_{\xi}|_{W_1^{\sigma}(v_\ell)}\right)
+ C h^{-\delta}\|\chi_{\xi}\|_{W_1^{k}(\Omega)}.
$$
Since $\|\chi_{\xi}\|_{W_1^{\sigma}(\Omega)} = \|\chi_{\xi}\|_{W_1^{k}(\Omega)}+|\chi_{\xi}|_{W_1^{\sigma}(\Omega)}$
and $h<1$, this leaves
\begin{equation}\label{Case3_full}
\|\chi_{\xi}\|_{W_1^{\sigma}(\Omega)}
\le 
|\chi_{\xi}|_{W_1^{\sigma}(B(\xi,(K+1)h))} 
+ 
\left(\sum_{\ell=0}^{\ell_0}   |\chi_{\xi}|_{W_1^{\sigma}(v_\ell)}\right)
 + 
C h^{-\delta}\|\chi_{\xi}\|_{W_1^{k}(\Omega)},
\end{equation}
which we must estimate.

{\em Estimating the third term in (\ref{Case3_full}):}
The final term is easiest to control: {\bf Case 3i} gives the estimate
\begin{equation}\label{Case3_thirdpart}
h^{-\delta}\|\chi_{\xi}\|_{W_1^{k}(\Omega)}\le  C\rho^{m-d/2} h^{d-k-\delta}.
\end{equation}

{\em Estimating the first term in (\ref{Case3_full}):}
The first term in (\ref{Case3_full}) is controlled in a similar way to (\ref{Case3_integer_case_inside}).
Begin by setting $R:= (K+1)h$, $u= \chi_{\xi}(\cdot - \xi)$ and $U = u(R(\cdot))$. 
Applying Lemma \ref{zero_scale} with $O= B(0,1)$ gives 
$$|\chi_{\xi}|_{W_1^{\sigma}(B(\xi,(K+1)h))}  = |u|_{W_1^{\sigma}(B(0,R)} \le 
C R^{d/2+m-\sigma} |u|_{W_2^{m}(B(0,R))} =C   h^{d/2+m-\sigma}|\chi_{\xi}|_{W_2^m(B(\xi,(K+1)h) )}
$$
The bump estimate (\ref{bump_estimate}) then gives
\begin{equation}\label{Case3_firstpart}
 |\chi_{\xi}|_{W_1^{\sigma}(B(\xi,(K+1)h))}
 \le
 C   h^{d/2+m-\sigma}|\chi_{\xi}|_{W_2^m(\Om )}
 \le
C h^{d/2+m-\sigma}q^{d/2-m}
\le
C
\rho^{m-d/2}h^{d-\sigma}.
\end{equation}

{\em Estimating the middle term in (\ref{Case3_full}):}
To handle the series appearing in (\ref{Case3_full}),
we proceed as in the last paragraph, 
 applying,  for each $\ell$, Lemma  \ref{zero_scale} , 
now with $u = \chi_{\xi}(\cdot - \xi)$, $R = 2^{\ell+2} (K-1)h$
and $U = u(R\cdot)$. In this case $O = B(0,1)\setminus B(0,1/8)$. The scaling lemma uses the embedding $W_2^{\sigma}(O)\subset W_1^{\sigma}(O)$ which incurs an embedding
constant $C$ which is independent of $\ell$.
$$
 |\chi_{\xi}|_{W_1^{\sigma}(v_\ell)} 
 =
  |u|_{W_1^{\sigma}(O_R)}
 \le
 C R^{d/2+m-\sigma}
 |u|_ {W_2^{m}(O_R)}
 =
 C (2^{\ell+2}(K-1))^{d/2+m-\sigma} h^{d/2+m-\sigma}
 \|\chi_{\xi}\|_{W_2^m(v_{\ell})}.
$$
Since $v_{\ell}$ is contained in $ \Om\setminus B(\xi, 2^{\ell-1}(K-1)h)$,
we have 
$ |\chi_{\xi}|_{W_1^{\sigma}(v_\ell)} \le C 2^{\ell(d/2+m-\sigma)} h^{d/2+m-\sigma}   \|\chi_{\xi}\|_{W_2^m(\Om\setminus B(v_{\ell})}$. 
Now we apply the 
energy estimate (\ref{lagrange_decay})
which gives 
$$   |\chi_{\xi}|_{W_1^{\sigma}(v_\ell)} \le C  2^{\ell(d/2+m-\sigma)} h^{d/2+m-\sigma}  q^{d/2-m} e^{-\mu (K-1)2^{\ell-1}}.$$ 
Observing that the infinite series $\sum_{\ell=0}^{\infty}(2^{\ell(d/2+m-\sigma)}e^{-\mu (K-1)2^{\ell-1}}$
 converges to a constant depending only on $d$ and $m$, 
we can bound the middle term:
\begin{equation}\label{Case3_secondpart}
\left(\sum_{\ell=0}^{\ell_0}   |\chi_{\xi}|_{W_1^{\sigma}(v_\ell)}\right)
 \le 
C   
h^{m+d/2-\sigma} q^{d/2-m}
= 
C \rho^{m-d/2} h^{d-\sigma}.
\end{equation}

The case $p=1$ follows from the estimates (\ref{Case3_firstpart}), (\ref{Case3_secondpart}), (\ref{Case3_thirdpart}) and (\ref{Case3_full}).

\paragraph{Case 4: $1<p<2$}
In this case, we use Riesz-Thorin to estimate the norm of the  operator 
$T:\ell_p(\Xi)\to W_p^{\sigma}(\Omega)$, 
where $T$ is the synthesis operator $T\bfa = \sum_{\xi\in\Xi} a_{\xi} \chi_{\xi}$ (i.e., the linear map which takes coordinate space $\comps^{\Xi}$ into the 
vector space $V_\Xi$).
 Letting 
 $\theta = 2(\frac1p - \frac12)$ 
 (so that $\frac1p = \theta 1 + (1-\theta) \frac12$) gives
 \begin{eqnarray*}
 \|\sum_{\xi\in\Xi} a_{\xi} \chi_{\xi}\|_{W_p^{\sigma}(\Omega)}
 &\le&
 \left(C \rho^{m-d/2} h^{d-\sigma} \right)^{\theta}
  \left(C \rho^{m-d/2} h^{d/2-\sigma}\right)^{1-\theta} \|\bfa\|_{\ell_p(\Xi)}\\
   &\le&
C \rho^{m-d/2} h^{d/p-\sigma} \|\bfa\|_{\ell_p(\Xi)}.
 \end{eqnarray*}
\end{proof}
%
%
%

Using Proposition~\ref{full_lowercomparison}, we may replace the discrete norm $\|\a\|_{\ell_p(\Xi)}$
by its equivalent $h^{-d/p}\|s\|_{L_p}$, and so obtain an $L_p$ version of 
Theorem~\ref{sobolev2discrete_bern}.

%
%
%
\begin{corollary}\label{sobolev2continuous_bern}
With the assumptions of Theorem~\ref{sobolev2discrete_bern}, we have
\begin{equation}\label{Wp2Lp}
\big\|\textstyle{\sum_{\xi\in \Xi}} a_{\xi} \chi_{\xi}\big\|_{W_p^{\sigma}(\Omega)} 
\le 
C h^{-\sigma } \big\|\textstyle{\sum_{\xi\in \Xi}} a_{\xi} \chi_{\xi}\big\|_{L_p(\Omega)} 
\end{equation} 
with $C=C(p,\sigma, m,\rho,\Omega)$.
\end{corollary}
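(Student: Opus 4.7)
The plan is to chain together the two main tools already established: the discrete Bernstein inequality from Theorem~\ref{sobolev2discrete_bern}, which controls the Sobolev norm by the sequence norm of the Lagrange coefficients, and the lower half of the Lagrange basis stability in Proposition~\ref{full_lowercomparison}, which controls the sequence norm by the $L_p$ norm of $s$ itself.

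Concretely, I would first apply Theorem~\ref{sobolev2discrete_bern} to write
$$\bigl\|\textstyle{\sum_{\xi\in\Xi}} a_\xi \chi_\xi\bigr\|_{W_p^\sigma(\Omega)} \le C\,\rho^{m+d/2+d/p}\, h^{d/p-\sigma}\,\|\bfa\|_{\ell_p(\Xi)}.$$
Then, using the lower estimate in \eqref{full_riesz_ineq} from Proposition~\ref{full_lowercomparison}, I would substitute
$$\|\bfa\|_{\ell_p(\Xi)} \le c_1^{-1}\,q^{-d/p}\,\bigl\|\textstyle{\sum_{\xi\in\Xi}} a_\xi\chi_\xi\bigr\|_{L_p(\Omega)}.$$
The remaining task is purely bookkeeping: since $q = h/\rho$, one has $q^{-d/p} = \rho^{d/p}\,h^{-d/p}$, so the two $h^{d/p}$ factors cancel, leaving
$$\bigl\|\textstyle{\sum_{\xi\in\Xi}} a_\xi\chi_\xi\bigr\|_{W_p^\sigma(\Omega)} \le C\, c_1^{-1}\,\rho^{m+d/2+2d/p}\,h^{-\sigma}\,\bigl\|\textstyle{\sum_{\xi\in\Xi}} a_\xi\chi_\xi\bigr\|_{L_p(\Omega)}.$$
Absorbing the $\rho$ and $c_1$ dependence into a single constant $C = C(p,\sigma,m,\rho,\Omega)$ yields \eqref{Wp2Lp}.

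There is really no obstacle to speak of here: the two prior results were designed precisely so that their conclusions match up, and the $h^{d/p}$ scaling factors are arranged to cancel. The only point requiring even minor care is making sure the hypothesis $h(\Xi,\Omega)\le h_0$ is available for both Proposition~\ref{full_lowercomparison} and Theorem~\ref{sobolev2discrete_bern}; this is ensured by taking $h_0$ to be the smaller of the two thresholds, which is already folded into the statement by the phrase ``with the assumptions of Theorem~\ref{sobolev2discrete_bern}.''
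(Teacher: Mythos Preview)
Your proposal is correct and follows exactly the approach the paper takes: the paper simply remarks that one replaces $\|\bfa\|_{\ell_p(\Xi)}$ by its equivalent $h^{-d/p}\|s\|_{L_p(\Omega)}$ via Proposition~\ref{full_lowercomparison} in the conclusion of Theorem~\ref{sobolev2discrete_bern}. Your bookkeeping with $q^{-d/p}=\rho^{d/p}h^{-d/p}$ and the absorption of the $\rho$-dependence into $C$ is precisely what is intended.
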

Explicit dependence of $C$ on $\rho$ can be obtained from (\ref{LP-Synthesis}) and Lemma B.6.

\section{Local Lagrange functions}
We now consider a new class of  functions $b_{\xi}\in S( \X)$, $\xi \in \Xi$,  constructed in a local and cost-effective way, 
employing only a small set of centers in $\X$ that are near $\xi$. For each $\xi\in \Xi$, this small set is called the local {\em footprint} of $\xi$ and denoted by $ \Upsilon(\xi)\subset \X$ (see Definition~\ref{local_l_space_def}). Each $b_\xi$ is a Lagrange interpolant, centered at $\xi$, for points in $\Upsilon(\xi)$. The set $\Upsilon(\xi)$ is chosen to give  $b_{\xi}$ fast decay away from $\xi$, although not the exponential. The size of the footprint is controlled by a parameter $K>0$.

Unlike the {\em full} Lagrange functions, the local versions do not satisfy interpolatory conditions throughout $\X$. There is no guarantee that they will have zeros outside of the set $ \Upsilon(\xi)$ - as a result
the operator $T\bfa = \sum_{\xi\in \Xi} a_{\xi} b_{\xi}$ does not satisfy $T\bfa (\xi) = a_{\xi}$. It is only a {\em quasi-interpolant} (approximation on the sphere with this operator was considered in \cite{FHNWW}).

As in \cite{FHNWW} the analysis of this new basis 
is considered in two steps. 
First, an intermediate basis function
$\wch_{\xi}$
 is constructed and studied: the {\em truncated Lagrange function}. 
 These functions employ the same footprint as $b_{\xi}$ 
 (i.e., they are members of $S(\Upsilon(\xi))$)
but their construction is global rather than local. 
This topic  is considered in Section \ref{SS:TLF}. Then, a comparison is made between the 
truncated Lagrange function and the local Lagrange function. 
The error between local and truncated Lagrange functions is controlled by the size of the  coefficients  in the representation of
 $b_{\xi} -\wch_{\xi}$ using the standard (kernel) basis for $S(\Upsilon(\xi))$.
This is considered  in Section \ref{SS:LLF}.

\subsection{Footprint and local Lagrange function}
\begin{definition}\label{local_l_space_def}
For a compact set $\Omega\subset \reals^d$ and a finite subset $\Xi\subset \Omega$, let $\X$ be the extension
to $ \{x\in \reals^d\mid \dist(x,\Omega)\le \mathrm{diam}(\Omega)\}$ given in Section \ref{SS:Extending_points}. 
For a positive parameter $K$, define $\Upsilon(\xi):= \{\zeta\in \X\mid |\xi-\zeta| \le Kh |\log h|\}$ for each $\xi\in\Xi$.
Then for the system of local Lagrange functions $(b_{\xi})_{\xi\in\X}$, where each $b_{\xi}$ is the Lagrange function centered at $\xi$, generated by $k_m$ over  $\Upsilon(\xi)$, let
$$\V_{\Xi} := \spam\{b_{\xi}\mid \xi \in \Xi\}.$$
\end{definition}

Note in particular that $\V_{\Xi}\subset S(\X)$. Indeed, it is contained in
a slight expansion of $S(\Xi)$. Namely,
 $\V_{\Xi}\subset S(\varUpsilon)$,
where $\varUpsilon:=  \bigcup_{\xi\in\Xi} \Upsilon(\xi) \subset \{\xi \in \X\mid \dist(\xi,\Omega)\le K h|\log h|\}$. 

The construction of each $b_{\xi}$ depends only on its nearby neighbors in $\Upsilon(\xi)$, 
so the majority of points in $\X$ are unnecessary from a computational point of view. However, 
the (analytic properties of) full Lagrange functions $\chi_{\xi}$ generated
by $k_m$ over $\X$ will still be of use in proving theorems,  so we will continue to refer to the extended set $\X$, 
even though  much of it plays no role in the construction of the functions $b_{\xi}$.

In our main result, we make use of the following:

\begin{quotation}
Let $(\chi_{\xi})_{\xi\in\Xi}$ be the family of ``full'' Lagrange functions constructed by $k_m$ using the extended
point set $\X$. For any $J>0$,  the family $(b_{\xi})_{\xi\in\Xi}$ satisfies 
\begin{equation}\label{compatible}
\|\chi_{\xi} - b_{\xi}\|_{W_p^{\sigma}(\Omega)}\lesssim h^J, \ \text{for all }\xi \in \Xi.
\end{equation}
\end{quotation}
To obtain this result, we show that for a given $J$ there is a  $K>0$, which governs the size of the footprint, ensuring that $\|\chi_{\xi} - b_{\xi}\|_{\infty} =\mathcal{O}(h^J)$ holds. The value of $K$ depends linearly on $J$, as well as some fixed constants involving 
$m$ and $d$. %

In the following two sections, we show that  this result holds for Mat{\'e}rn (in Lemma \ref{Matern_compare}) and surface spline radial basis functions (in Lemma \ref{ss_comparison}). 
Specifically, this holds for any prescribed value of $J$, where $J$ depends linearly on $K$, as given in Definition~\ref{local_l_space_def}.

\subsection{Intermediate construction: Truncated Lagrange functions}\label{SS:TLF}

For a (full) Lagrange function 
$\chi_{\xi} = \sum_{\zeta\in \X} A_{\xi,\zeta} k(\cdot, \zeta) +p \in S( \X)$
on the point set $ \X$, the truncated Lagrange function 
$\wch_{\xi} :=  \sum_{\zeta\in\Upsilon(\xi)} \wA_{\xi,\zeta} k(\cdot, \zeta) +p$
is a function in $S(\Upsilon(\xi))$ obtained by omitting the coefficients outside of $\Upsilon(\xi)$
and  slightly modifying the remaining coefficients $\bfA_{\xi} = (A_{\xi,\zeta})$. 
(For positive definite kernels, no modification is necessary, and the construction is quite simple.)

The cost of truncating can be measured using the norm of the omitted coefficients (the tail).

\begin{lemma} \label{trunc_coeffs}
Let $K> (4m-2d)/\nu$ and for each $\xi\in \Xi$, 
let $\Upsilon(\xi) = \{\zeta\in  \X\mid |\xi-\zeta| \le K h|\log h|\}$. 
Then
$$
\sum_{\zeta\in  \X\setminus \Upsilon(\xi)} |A_{\xi,\zeta}|
\le 
C \rho^{2m} h^{K\nu/2 +d-2m}
$$
with $C = C(m,d)$.
\end{lemma}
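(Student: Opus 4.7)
The plan is to combine the pointwise exponential decay estimate \eqref{coeff} for Lagrange coefficients with the annular summation bound \eqref{decreasing}, using the fact that the truncation radius $Kh|\log h|$ forces each surviving exponential factor to be at most a power of $h$.

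Concretely, I would start from \eqref{coeff}, which gives
\[
|A_{\xi,\zeta}| \le C q^{d-2m} \exp\Bigl(-\nu\tfrac{|\xi-\zeta|}{h}\Bigr).
\]
The trick is to split this exponential in half:
\[
\exp\Bigl(-\nu\tfrac{|\xi-\zeta|}{h}\Bigr) = \exp\Bigl(-\tfrac{\nu}{2}\tfrac{|\xi-\zeta|}{h}\Bigr)\cdot\exp\Bigl(-\tfrac{\nu}{2}\tfrac{|\xi-\zeta|}{h}\Bigr).
\]
On the tail $|\xi - \zeta| \ge K h |\log h|$, the first factor is bounded by $\exp(-\tfrac{\nu K}{2}|\log h|) = h^{K\nu/2}$ (for $h < 1$). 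The second factor is then used to generate a convergent sum over $\X \setminus \Upsilon(\xi)$.

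For the remaining sum I apply the annular estimate \eqref{decreasing} with $f(r) = \exp(-\tfrac{\nu}{2}r/h)$. Since $q(\X) = q$ and $q/h = 1/\rho$, this yields
\[
\sum_{\zeta\in \X} \exp\Bigl(-\tfrac{\nu}{2}\tfrac{|\xi-\zeta|}{h}\Bigr) \le 1 + C \sum_{n=1}^\infty n^{d-1} \exp\Bigl(-\tfrac{\nu}{2\rho}n\Bigr) \le C \rho^d,
\]
where the last step uses the standard asymptotic $\sum_{n\ge 1} n^{d-1} e^{-tn} \sim \Gamma(d)/t^d$ as $t \to 0^+$, applied with $t = \nu/(2\rho)$. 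Combining the two bounds and using $q^{d-2m} = (h/\rho)^{d-2m} = \rho^{2m-d} h^{d-2m}$ gives
\[
\sum_{\zeta\in \X\setminus \Upsilon(\xi)} |A_{\xi,\zeta}| \le C q^{d-2m} h^{K\nu/2} \rho^d = C \rho^{2m} h^{K\nu/2 + d - 2m},
\]
which is exactly the claim. The hypothesis $K > (4m-2d)/\nu$ simply guarantees the exponent $K\nu/2 + d - 2m$ is positive, so the bound actually tends to zero with $h$; it is not needed for the estimate itself.

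There is no real obstacle here: the ingredients are already in place, and the whole lemma is essentially a careful bookkeeping exercise once one commits to splitting the exponential. The only subtlety worth flagging is the $\rho$-dependence in the bound $\sum_n n^{d-1} e^{-n/(2\rho)} \le C \rho^d$, which is where the explicit $\rho^{2m}$ prefactor in the statement ultimately originates.
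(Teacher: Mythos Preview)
Your proof is correct and follows essentially the same strategy as the paper: both start from the coefficient decay estimate \eqref{coeff}, split the exponential factor $e^{-\nu|\xi-\zeta|/h}$ in half so that one half yields $h^{K\nu/2}$ on the tail while the other half produces a convergent sum, and then combine with $q^{d-2m}=\rho^{2m-d}h^{d-2m}$. The only minor difference is that the paper converts the tail sum to a radial integral via a volume-packing argument before splitting, whereas you split first and then invoke the discrete annular estimate \eqref{decreasing}; both routes give the same $\rho^d$ factor and hence the same final bound.
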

\begin{proof}
The inequality (\ref{coeff}) guarantees  that 
$\sum_{\zeta \in  \X \setminus \Upsilon(\xi)} |A_{\xi,\zeta}|
\le
Cq^{d-2m} \sum_{|\xi-\zeta|\ge Kh|\log h|} \exp\left(-\nu\frac{\dist(\xi,\zeta)}{h} \right)
$.
 By observing that for $\zeta\in \X\setminus \Upsilon(\xi)$, we have $q^d \le C \vol\bigl(B(\zeta,q) \setminus B(\xi,Kh|\log h|)\bigr)$ with a constant $C$ that depends only on the spatial dimension $d$. 
 (Note that for most $\zeta$, the above set is simply $B(\zeta,q)$, while for those $\zeta$ which are near the boundary of $B(\xi,Kh|\log h|)$ the set contains a half-ball), we 
 can control the above sum by an integral, namely
\begin{eqnarray}
\sum_{\zeta \in  \X \setminus \Upsilon(\xi)} |A_{\xi,\zeta}|&\le& 
Cq^{d-2m} \sum_{|\xi-\zeta|\ge Kh|\log h|} \exp\left(-\nu\frac{|\xi-\zeta|}{h} \right)\nonumber\\
&\le& Cq^{-2m}
\sum_{|\xi-\zeta|\ge Kh|\log h|} \int_{y\in B(\zeta,q) \setminus B(\xi,Kh|\log h|)}
 \exp\left(-\nu\frac{|\xi-\zeta|}{h} \right)\dif y\nonumber\\
&\le& Cq^{-2m}
\int_{y\in \reals^d \setminus B(\xi,Kh|\log h|)}
 \exp\left(-\nu\frac{|\xi-\zeta|}{h} \right)\dif y
 \label{sum_of_truncated_coeffs}
  \end{eqnarray}
In the final inequality, we have used the fact that the sets $B(\zeta,q)\setminus B(\xi,Kh|\log h|)$ are disjoint
and that for $y\in  B(\zeta,q)$, $\dist(\xi,y)\le \dist(\xi,\zeta) + q\le \dist(\xi,\zeta)+h$, which implies
$- \dist(\xi,\zeta) \le - \dist(\xi,y) +h$ (leading to a small increase in the constant; a factor of $e^{\nu}$).

Applying a polar change of variables in the final integral gives the inequality
\begin{equation*}
\sum_{\zeta \in  \X \setminus \Upsilon(\xi)} |A_{\xi,\zeta}|
\le Cq^{-2m}
\int_{Kh|\log h|}^{\infty}
 \exp\left(-\nu\frac{r}{h} \right)r^{d-1}\dif r.
\end{equation*}
We simplify this estimate by splitting $\nu = \nu/2 +\nu/2$ and writing
\begin{eqnarray*}
\sum_{\zeta \in  \X \setminus \Upsilon(\xi)} |A_{\xi,\zeta}|
&\le&
C h^d q^{-2m}
\left(\int_{K|\log h|}^{\infty} r^{d-1}
 \exp\left(- K |\log h| \frac{\nu}{2} \right) 
 \exp\left(- r \frac{\nu}{2} \right)\dif r\right)\\
 &\le& 
 C h^d q^{-2m}
 h^{K\nu/2}
 = C \rho^{2m} h^{K\nu/2 +d-2m}.
\end{eqnarray*}
The lemma follows.
\end{proof}

\subsubsection{ Bounds for truncated functions:  Mat{\' e}rn functions} 

Let $\| \cdot \|_Z$  be a norm on $S( \X)$ for which a universal constant $\Gamma $
exists
so that $\sup_{z\in\Omega} \|k_m(\cdot-z)\|_Z \le \Gamma$.
Since $\|k_m(\cdot-z)\|_Z$ is finite and bounded independent of $z$,
we have
\begin{equation}\label{general_norm}
\|\chi_{\xi} - \wch_{\xi}\|_Z \le\Gamma \sum_{\zeta \in  \X \setminus \Upsilon(\xi)} |A_{\xi,\zeta}|  \le C \Gamma 
 \rho^{2m}h^{K\nu/2-2m+d}.
 \end{equation}
  In particular, we have the following:
  \begin{lemma} \label{Mat}
Let $m>d/2$  and consider the  Mat{\' e}rn radial basis function $k_m = \kappa_m$ described in (\ref{Matern}).
  For $1\le p< \infty$ and 
  $\sigma < 2m-d + \frac{d}{p}$   
  we have 
  $$
\|\chi_{\xi} - \wch_{\xi} \|_{W_p^\sigma(\reals^d)} 
 \le 
 \sum_{\zeta \in  X \setminus \Upsilon(\xi)  } |A_{\xi,\zeta}|  \left\| \kappa_m(\cdot, \zeta) \right\|_{W_p^\sigma(\reals^d)} 
\le 
C\rho^{2m}h^{K\nu/2 +d -2m}
 $$
 with $C=C(m,d)$.

 For $p=\infty$, the above result holds for the H{\"o}lder space $W_{\infty}^\sigma(\reals^d)$ replaced with $C^{\sigma}(\reals^d)$
\end{lemma}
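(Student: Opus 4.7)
The plan is to exploit the fact that, for the Mat{\'e}rn kernel, the representation of $\chi_\xi - \wch_\xi$ in the kernel basis is very clean (no polynomial part is needed because $\kappa_m$ is strictly positive definite), and then to reduce the second inequality to a uniform bound on $\|\kappa_m(\cdot - \zeta)\|_{W_p^\sigma(\reals^d)}$ via translation invariance.

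First, I would write
\[
\chi_\xi \;=\; \sum_{\zeta \in \X} A_{\xi,\zeta}\, \kappa_m(\cdot - \zeta),\qquad
\wch_\xi \;=\; \sum_{\zeta \in \Upsilon(\xi)} A_{\xi,\zeta}\, \kappa_m(\cdot - \zeta),
\]
so that $\chi_\xi - \wch_\xi = \sum_{\zeta \in \X \setminus \Upsilon(\xi)} A_{\xi,\zeta}\, \kappa_m(\cdot - \zeta)$. The first inequality in the statement is then immediate from the triangle inequality in $W_p^\sigma(\reals^d)$ (or in $C^\sigma(\reals^d)$ when $p=\infty$).

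Next, I would observe that the Sobolev (respectively H{\"o}lder) norm on $\reals^d$ is translation invariant, so $\|\kappa_m(\cdot - \zeta)\|_{W_p^\sigma(\reals^d)} = \|\kappa_m\|_{W_p^\sigma(\reals^d)} =: \Gamma$ for every $\zeta$. To see that $\Gamma < \infty$ in the stated range $\sigma < 2m - d + d/p$, I would use the Fourier characterization $\widehat{\kappa_m}(\omega) = C(1+|\omega|^2)^{-m}$, which places $\kappa_m$ in the $L_2$-based Bessel potential space of any order $s < 2m - d/2$, together with the Sobolev embedding $H^s(\reals^d) \hookrightarrow W_p^\sigma(\reals^d)$ valid for $\sigma \le s - d/2 + d/p$. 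The only concerns are the singular behavior of $\kappa_m$ at the origin (handled by Fourier decay) and integrability at infinity (handled by the exponential decay of $K_{m-d/2}$); these match the regularity already quoted in Section \ref{SSS:order}. For $p=\infty$ the same argument works with the standard embedding into $C^\sigma(\reals^d)$ for $\sigma < 2m - d/2$.

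Finally, I would combine the triangle inequality, the uniform bound $\|\kappa_m(\cdot - \zeta)\|_{W_p^\sigma(\reals^d)} \le \Gamma$, and Lemma \ref{trunc_coeffs} to conclude
\[
\|\chi_\xi - \wch_\xi\|_{W_p^\sigma(\reals^d)} \;\le\; \Gamma \sum_{\zeta \in \X \setminus \Upsilon(\xi)} |A_{\xi,\zeta}| \;\le\; C\rho^{2m} h^{K\nu/2 + d - 2m},
\]
which is the desired estimate. The only nontrivial step is the verification that $\|\kappa_m\|_{W_p^\sigma(\reals^d)}$ is finite in the full claimed range of $\sigma$; once the Fourier/Sobolev embedding argument is in hand, the remainder is just the triangle inequality together with the already-proved tail bound on the Lagrange coefficients.
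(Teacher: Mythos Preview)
Your proposal is correct and follows essentially the same route as the paper: write $\chi_\xi-\wch_\xi$ as the tail sum $\sum_{\zeta\in\X\setminus\Upsilon(\xi)}A_{\xi,\zeta}\kappa_m(\cdot-\zeta)$, apply the triangle inequality, use translation invariance to get a uniform bound $\Gamma=\|\kappa_m\|_{W_p^\sigma(\reals^d)}$, and then invoke Lemma~\ref{trunc_coeffs}. One small caveat: the embedding $H^s(\reals^d)\hookrightarrow W_p^\sigma(\reals^d)$ you quote is only valid for $p\ge 2$ (since $L_2(\reals^d)\not\subset L_p(\reals^d)$ when $p<2$); for $1\le p<2$ you need to combine the local smoothness from the Fourier side with the exponential decay of $K_{m-d/2}$ directly, as you hint---the paper simply cites \cite[Lemma A.1]{HNRW1} for this fact.
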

\begin{proof}
We have from \cite{HNRW1}[Lemma A.1] that 
$\kappa_m \in W_p^{\tau}(\reals^d)$ for $1\le p< \infty$ and $\tau < 2m - d+d/p$, while for
$p=\infty$, $\kappa_m \in C^{\tau}(\reals^d)$ with $\tau < 2m - d$. 
In either case, the smoothness norm is translation invariant, so it follows that
   $$\|\kappa_m(\cdot - z)\|_{ W_p^{\tau}(\reals^d)}\le C_{\tau,p}\qquad\text{and}\qquad
    \|\kappa_m(\cdot - z)\|_{ C^{\tau}(\reals^d)}\le C_{\tau,\infty}$$
hold. The result follows from (\ref{general_norm}).
 \end{proof}

\subsubsection{Bounds for truncated functions: Surface splines}
\label{CPD_truncated}
When $k_m=\phi_m$ (i.e., a surface spline, and therefore conditionally positive definite),
 the  argument of the previous section is  a little more complicated. 
  Given a Lagrange function $\chi_{\xi} = \sum_{\zeta\in  X} A_{\zeta,\xi} k_m(\cdot,\zeta) + p$,
 simply truncating coefficients does not yield a function in $S(\Upsilon(\xi))$. 
 That is, $( A_{\zeta,\xi} )_{\zeta\in \Upsilon(\xi)}$
 does not necessarily satisfy the side condition $\sum_{\zeta\in \Upsilon(\xi)} A_{\zeta,\xi} p(\zeta) = 0$ for all $p\in \Pi_{m-1}$.

The result for {\em restricted surface splines} on even dimensional spheres 
($\mathbb{S}^{2n}$)
has been developed in \cite[Proposition 6.1]{FHNWW}.
We now present a similar estimate for surface splines on $\reals^d$
where the truncated Lagrange function is corrected by perturbing its coefficients slightly. 
This is done by using the orthogonal projector having range $ \perp (\Pi_{m-1}\left|_{\Upsilon(\xi)}\right.)$.
Keeping this perturbation small is essential to our later results, so we must estimate it. 
We use the following result about  Gram matrices for polynomials sampled on finite point sets.

\paragraph{Gram matrices for polynomials restricted to point sets}
Let  $N = \dim \Pi_{m-1}$ and consider $X\subset \reals^d$ a finite point set.
For a basis $\{p_1,\dots, p_N\}$ of $\Pi_{m-1}$,  denote by 
 $\Phi_{X}$ the (Vandermonde-type) matrix
  with $N$ columns and $\# X$ rows
  whose $j^{th}$ column is $p_j$ restricted to $X$. In other words,
  \begin{equation}\label{Phi}
  \Phi_{X}\in M_{ (\# X)\times N}(\reals)\qquad \text{with} \qquad (\Phi_{X})_{\xi,j} = p_j(\xi).
  \end{equation}

  \begin{lemma}\label{Gram_assumption}
  For every $m\in \nats$, and any radius $r>0$, point $x\in\reals^d$
  and  point set $X\subset B(x,r)$ with fill distance $h\le h_0 r$, where $h_0=h_0(m)$,
  the inverse of the Gram matrix $\G_X = \Phi_X^T \Phi_X\in M_{N\times N}(\reals)$ has norm bounded by
 $$\|\G_X^{-1}\|_{2\to 2} \le   C r^{-2(m-1)} $$
 with $C= C(m,d)$.
 \end{lemma}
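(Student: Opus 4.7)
The plan is to reduce to the unit ball via scaling, establish a polynomial norming inequality there using Markov's inequality, and then transfer the bound back to $B(x,r)$ through a change-of-variables that is responsible for the $r^{-2(m-1)}$ factor.

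First, I would set $\tilde X := \{(\xi - x)/r : \xi \in X\} \subset B(0,1)$, noting that $\tilde X$ has fill distance $\tilde h := h/r \leq h_0$ in $B(0,1)$. The affine map $\sigma(y) := x + ry$ bijects $\tilde X \leftrightarrow X$ and induces a linear automorphism $R : \Pi_{m-1} \to \Pi_{m-1}$ given by $Rp = p\circ \sigma$. Letting $M = M(x, r)$ be the matrix of $R$ with respect to the basis $\{p_j\}$ (so that $Rp_j = \sum_k M_{kj}\, p_k$), a direct evaluation of $p_j$ at $\xi = \sigma(y) \in X$ yields
\[
\Phi_X = \tilde \Phi_{\tilde X}\, M, \qquad G_X = M^T \tilde G_{\tilde X}\, M,
\]
where $\tilde G_{\tilde X} := \tilde\Phi_{\tilde X}^T \tilde\Phi_{\tilde X}$ is the Gram matrix built from $\tilde X$.

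Next, I would show $\lambda_{\min}(\tilde G_{\tilde X}) \geq c_1(m,d) > 0$ whenever $h_0$ is small enough. The tool is the classical Markov inequality $\|\nabla p\|_{L_\infty(B(0,1))} \leq (m-1)^2\, \|p\|_{L_\infty(B(0,1))}$ for $p \in \Pi_{m-1}$. For any $y^\star \in B(0,1)$ at which $|p|$ attains its maximum, let $\tilde\xi \in \tilde X$ be a nearest center; then $|p(y^\star) - p(\tilde\xi)| \leq \tilde h\, (m-1)^2 \|p\|_{L_\infty(B(0,1))} \leq \tfrac12 \|p\|_{L_\infty(B(0,1))}$ once $h_0 := 1/(2(m-1)^2)$. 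This gives the norming inequality $\max_{\tilde\xi \in \tilde X} |p(\tilde\xi)| \geq \tfrac12 \|p\|_{L_\infty(B(0,1))}$. Combined with the finite-dimensional norm equivalence $\|p_v\|_{L_\infty(B(0,1))} \geq c(m,d)\,\|v\|_2$ on the fixed basis $\{p_j\}$, this yields
\[
\lambda_{\min}(\tilde G_{\tilde X}) \;=\; \min_{\|v\|_2=1}\sum_{\tilde\xi \in \tilde X}|p_v(\tilde\xi)|^2 \;\geq\; \big(\tfrac{c}{2}\big)^{\!2}.
\]

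Finally, to transfer back to $B(x,r)$, I would use $G_X^{-1} = M^{-1} \tilde G_{\tilde X}^{-1} M^{-T}$, so that $\|G_X^{-1}\|_{2\to 2} \leq \|M^{-1}\|_{2\to 2}^{\,2}\,\|\tilde G_{\tilde X}^{-1}\|_{2\to 2}$. The inverse map $R^{-1}: p(z) \mapsto p((z-x)/r)$ scales each degree-$|\alpha|$ component by $r^{-|\alpha|}$, which yields $\|M^{-1}\|_{2\to 2} \leq C_2 r^{-(m-1)}$ by an elementary computation in the chosen basis, giving the claimed bound $\|G_X^{-1}\|_{2\to 2} \leq C r^{-2(m-1)}$.

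The main obstacle is the bookkeeping in the last step: one must verify that the blow-up in $\|M^{-1}\|$ is controlled purely by $r^{-(m-1)}$, with the constants depending only on the fixed basis $\{p_j\}$ and on $m, d$, rather than on the location $x$ of the ball. The norming inequality on $B(0,1)$ is robust and standard; the Markov-step constant $h_0$ depends only on $m$, as needed.
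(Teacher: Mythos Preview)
Your approach is essentially the paper's: a norming-set inequality (the paper cites Wendland's \cite[Theorem 3.8 and Corollary 3.11]{Wend}; you derive it from Markov) combined with polynomial scaling on balls. Where you track $\|M^{-1}\|$, the paper instead invokes the growth estimate $\|p\|_{L_\infty(B(x,1))} \le C_m r^{-(m-1)}\|p\|_{L_\infty(B(x,r))}$ together with the norm equivalence $\|\bfa\|_{\ell_2(N)} \le C_{m,d}\|\sum_j a_j p_j\|_{L_\infty(B(x,1))}$; the $x$-dependence you flag as an obstacle is precisely what sits inside this last inequality in the paper's argument, which the paper simply asserts.
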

 \begin{proof}
 From \cite[Theorem 3.8 and Corollary 3.11]{Wend}, we have that if $X\subset B(x,r)$ has fill distance $h\le h_0 r$, then
 $X$ is a {\em norming} set for $B(x,r)$ with norming constant $2$. (Here $h_0 = c_{m-1}$, from \cite[Corollary 3.11]{Wend}.)  This means that for every $p\in \Pi_{m-1}$,
 $ \|p \|_{L_{\infty}(B(x,r))}\le 2  \|p\left|_{X}\right.\|_{\ell_\infty(X)}$.
 
 The norm of the Gram matrix can be controlled  by 
$$\| \G_X^{-1}\|_{2\to 2} = (\min_{\|\bfa\|=1}\langle \G_X \bfa,\bfa\rangle )^{-1} \quad \text{ and } \quad
\langle \G_X \bfa, \bfa\rangle =  \|\Phi_X \bfa \|_{\ell_2(X)}^2 = \| R_{X} V \bfa\|_{\ell_2(X)}^2$$
where $V \bfa := \sum_{j=1}^N a_j p_j$ and 
$R_{X}$ is the restriction operator $R_{X} V \bfa = \sum_{j=1}^N a_j p_j\left|_{X}\right.$.
For $h$ sufficiently small, the norming set property ensures that
\begin{equation*}
 \|p \|_{L_{\infty}(B(x,r))}
\le
2  \|R_{X} p\|_{\ell_\infty(X)}
\le 2   
\| R_{X} p\|_{\ell_2(X)}.
\end{equation*}
On the other hand, we have the following growth properties of polynomials $\Pi_{m-1}$: there exists a constant $C_m>0$ so that
for every $0<r<1$,
$\|p\|_{L_{\infty}(B(x,1))} \le C_m r^{-(m-1)} \|p\|_{L_{\infty}(B(x,r))}$. 
Returning to the basis $(p_1,\dots ,p_N)$, we have
$$ 
\|\bfa \|_{\ell_2(N)}
\le
C_{m,d}\|\sum_{j=1}^N a_j 
p_j \|_{L_{\infty}(B(x,1))}
\le
C_{m,d}\left(\frac{1}{r}\right)^{m-1}
\|\sum_{j=1}^N a_j p_j \|_{L_{\infty}(B(x,r))}.
$$
This gives 
 $\|\bfa\|_{\ell_2(N)} \le C  r^{-(m-1)} 
 \|  \sum_{j=1}^N a_j p_j\left|_{X}\right. \|_{\ell_2(X)}$, 
 and the result follows.
\end{proof}
A bound similar to this for $\S^{d-1}$ using spaces of spherical harmonics in place of $\Pi_{m-1}$ has been demonstrated in \cite[Lemma 6.4]{FHNWW},
while \cite{HNRW1} gives general conditions for the auxiliary space of a CPD kernel.

\paragraph{Modifying coefficients}
 We use the matrix $\Phi_{\Upsilon(\xi)}$ 
to construct  
$P =\Phi_{\Upsilon(\xi)}(\Phi_{\Upsilon(\xi)}^T \Phi_{\Upsilon(\xi)})^{-1}\Phi_{\Upsilon(\xi)}^T$,
the orthogonal projector
 which has range
$\Pi\left|_{\Upsilon(\xi)}\right.$ and kernel $\perp\!(\Pi\left|_{\Upsilon(\xi)}\right.)$. 
For a fixed $\xi$, denote the truncated coefficients $ (A_{\zeta,\xi})_{\zeta\in\Upsilon(\xi)} \in \reals^{\Upsilon(\xi)}$ by $\bfA_{\xi}$. 
In order to satisfy the side conditions, we generate the modified coefficients 
$\wbfA_{\xi} = (\wA_{\zeta,\xi}) \in \reals^{\Upsilon(\xi)}$ via
$$\wbfA_{\xi}= \bfA_{\xi} - P\bfA_{\xi}.$$ 
In other words, $\wbfA_{\xi}$ is the orthogonal projection of $\bfA_{\xi}$ onto $\perp (\Pi\left|_{\Upsilon(\xi)}\right.)$.
Define the `truncated' Lagrange function as
$$\wch_{\xi} := \sum_{\zeta\in \Upsilon(\xi)} \wA_{\zeta,\xi} \phi_m(\cdot-\zeta) +p.$$
  
  \begin{lemma}\label{ell2upsilon}
  Let $m>d/2$  and consider the  surface spline radial basis function $k_m = \phi_m$ described in (\ref{surface_spline}).
  For sufficiently small $h$ we have
  \begin{equation}\label{truncation_error}
  \|\bfA - \wbfA\|_{\ell_2(\Upsilon(\xi))}\le C\rho^{2m}h^{K\nu/2+1 -3m+d}  |\log h|^{1-m}
  \end{equation}  
  with $C=C(m,d)$.
  \end{lemma}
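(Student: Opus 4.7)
The starting point is the identity $\bfA - \wbfA = P\bfA_\xi$, where $P = \Phi_{\Upsilon(\xi)}\G_{\Upsilon(\xi)}^{-1}\Phi_{\Upsilon(\xi)}^T$ is the orthogonal projector onto the range of $\Phi_{\Upsilon(\xi)}$ in $\reals^{\Upsilon(\xi)}$. The usual Hilbert-space calculation
\[
\|P\bfA_\xi\|_{\ell_2(\Upsilon(\xi))}^2
= \langle \Phi_{\Upsilon(\xi)}^T \bfA_\xi,\ \G_{\Upsilon(\xi)}^{-1}\Phi_{\Upsilon(\xi)}^T \bfA_\xi \rangle
\le \|\G_{\Upsilon(\xi)}^{-1}\|_{2\to 2}\,\|\Phi_{\Upsilon(\xi)}^T\bfA_\xi\|_{\ell_2}^2
\]
reduces the task to bounding the two right-hand factors separately.

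For the first factor I would fix the shifted monomial basis $p_j(x)=(x-\xi)^{\alpha_j}$, $|\alpha_j|\le m-1$, of $\Pi_{m-1}$ and invoke Lemma~\ref{Gram_assumption} with radius $r=Kh|\log h|$ on $\Upsilon(\xi)\subset B(\xi,r)$. Its fill hypothesis is satisfied once $h$ is small enough that $h\le h_0 Kh|\log h|$, and the conclusion reads $\|\G_{\Upsilon(\xi)}^{-1}\|_{2\to 2}\le C(Kh|\log h|)^{-2(m-1)}$.

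The key step, and the main technical obstacle, is bounding $\|\Phi_{\Upsilon(\xi)}^T\bfA_\xi\|_{\ell_2}$. Here I would exploit the moment condition enjoyed by the full Lagrange coefficients: since $\chi_\xi\in S(\X)$ for the surface spline $\phi_m$, the coefficients $(A_{\zeta,\xi})_{\zeta\in\X}$ satisfy $\sum_{\zeta\in\X} A_{\zeta,\xi}\,p(\zeta)=0$ for every $p\in\Pi_{m-1}$. Splitting this identity at $\Upsilon(\xi)$ converts $\Phi_{\Upsilon(\xi)}^T \bfA_\xi$ into a tail sum,
\[
(\Phi_{\Upsilon(\xi)}^T \bfA_\xi)_j = -\sum_{\zeta\in\X\setminus\Upsilon(\xi)} A_{\zeta,\xi}\,p_j(\zeta),
\]
and the polynomial weight $|p_j(\zeta)|\le |\zeta-\xi|^{m-1}$ must then be absorbed into the exponential decay \eqref{coeff} of $|A_{\zeta,\xi}|$. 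Following the sum-to-integral template from the proof of Lemma~\ref{trunc_coeffs} (now with the extra factor $|y-\xi|^{m-1}$ in the integrand) and writing $e^{-\nu u}\le h^{K\nu/2} e^{-\nu u/2}$ on $u\ge K|\log h|$ so that the residual integral $\int_0^\infty u^{m+d-2}e^{-\nu u/2}\,du$ is a finite $(m,d)$-constant, produces a bound of the shape $\|\Phi_{\Upsilon(\xi)}^T\bfA_\xi\|_{\ell_2}\le C q^{-2m} h^{K\nu/2+m+d-1}$.

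Combining the two factors, pulling out $\|\G^{-1}\|^{1/2}\le C(h|\log h|)^{-(m-1)}$, substituting $q=h/\rho$, and absorbing the fixed dimension $N=\dim\Pi_{m-1}$ into the constant $C=C(m,d)$ then delivers the claimed estimate (with the stated $|\log h|^{1-m}$ factor arising precisely from $(h|\log h|)^{-(m-1)}$). The only subtle accounting is in the tail estimate above, since one needs both the correct $h^{K\nu/2}$ decay \emph{and} the right power of $|\log h|$ bookkeeping coming from the Gram inverse; everything else is a routine combination of ingredients already established in the paper.
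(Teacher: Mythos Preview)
Your approach is essentially the paper's: the same projection identity $\bfA-\wbfA=P\bfA_\xi$, the same bound $\|P\bfA_\xi\|\le\|\G_{\Upsilon(\xi)}^{-1}\|^{1/2}\|\Phi_{\Upsilon(\xi)}^T\bfA_\xi\|$, the same Gram-inverse estimate from Lemma~\ref{Gram_assumption} with $r=Kh|\log h|$, and the same moment-condition trick converting $\Phi_{\Upsilon(\xi)}^T\bfA_\xi$ into a tail sum over $\X\setminus\Upsilon(\xi)$.

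There is, however, a concrete slip in the tail estimate. Your bound $|p_j(\zeta)|\le|\zeta-\xi|^{m-1}$ is false for the low-degree shifted monomials whenever $|\zeta-\xi|<1$: for the constant $p_j\equiv 1$ one has $|p_j(\zeta)|=1$, which is \emph{not} $\le|\zeta-\xi|^{m-1}$ there. Since the tail starts at radius $Kh|\log h|\to 0$, this regime is genuinely present, and your integrand $|y-\xi|^{m-1}$ understates the true weight. The correct uniform bound is $|p_j(\zeta)|\le\max(1,|\zeta-\xi|^{m-1})$, and with that weight the sum-to-integral argument gives only
\[
\|\Phi_{\Upsilon(\xi)}^T\bfA_\xi\|_{\ell_2}\le C\rho^{2m}h^{K\nu/2+d-2m},
\]
a factor of $h^{m-1}$ weaker than the $Cq^{-2m}h^{K\nu/2+m+d-1}$ you claimed. (The dominant contribution comes from radii $r\ll 1$, where the weight is $1$, so the integral behaves exactly as in Lemma~\ref{trunc_coeffs}.) Multiplying by $\|\G^{-1}\|^{1/2}\le C(h|\log h|)^{-(m-1)}$ now produces precisely the exponent $K\nu/2+1-3m+d$ stated in the lemma, matching the paper's proof. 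So the skeleton of your argument is correct; only the polynomial bound needs the $\max(1,\cdot)$ correction, after which the exponents line up exactly rather than overshooting.
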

  \begin{proof}
  
  We estimate the $\ell_2$ norm of the difference  of the coefficients as
$$
\|\bfA_{\xi} - \wbfA_{\xi}\|_{\ell_2(\Upsilon(\xi))} 
=
\|P\bfA_{\xi}\|_{\ell_2(\Upsilon(\xi))}
= \langle \Phi_{\Upsilon(\xi)}^T \bfA_{\xi}, \G_{\Upsilon(\xi)}^{-1}\Phi_{\Upsilon(\xi)}^T\bfA_{\xi}\rangle^{1/2} 
\le 
\| \G_{\Upsilon(\xi)}^{-1}\|_{2\to 2}^{1/2} \|\Phi_{\Upsilon(\xi)}^T \bfA_{\xi}\|_{\ell_2(N)} .
$$
Since $\sum_{\zeta\in \X} A_{\zeta,\xi} p(\xi) = 0$  for all $p\in\Pi $, we have
$
\Phi_{\Upsilon(\xi)}^T \bfA_{\xi} 
= 
-\bigl(\sum_{\zeta\in \X\setminus \Upsilon(\xi)} A_{\zeta,\xi} p_j(\zeta)\bigr)_{j=1}^N 
$. 

Applying the estimate (\ref{decreasing})
the
 $\ell_2(N)$ norm of $\Phi^T \bfA$ is controlled by
\begin{equation*}
 \|\Phi_{\Upsilon(\xi)}^T \bfA_{\xi}\|_{\ell_2(N)}\le  
 \|\Phi_{\Upsilon(\xi)}^T \bfA_{\xi}\|_{\ell_1 (N)}
 \le
\sum_{j=1}^N\left| \sum_{\zeta\in \X\setminus \Upsilon(\xi)} A_{\zeta,\xi} p_j(\zeta)\right|
\le
\sum_{\zeta\in \X\setminus \Upsilon(\xi)} \left| A_{\zeta,\xi}\right| \sum_{j=1}^N  |p_j(\zeta)|.
 \end{equation*}
  In the first estimate we use the inequality $\sum |c_j|^2 \le(\sum |c_j|)^2$.
 Applying H{\"o}lder's inequality and (\ref{coeff}) to the right hand side gives
 $$
  \|\Phi_{\Upsilon(\xi)}^T \bfA_{\xi}\|_{\ell_2(N)}\le  
  N
 \sum_{\zeta\in \X\setminus \Upsilon(\xi)}(\max_{j=1\dots N} |p_j(\zeta)|) \left| A_{\zeta,\xi}\right|
 \le 
C q^{d-2m} \sum_{\zeta\in \X\setminus \Upsilon(\xi)}(\max_{j=1\dots N} |p_j(\zeta)|) \mathrm{exp}\left(-{\nu} \frac{\dist(\xi,\zeta)}{h}\right).
 $$
where we have absorbed $N$ (recall that $N= \dim \Pi_{m-1}$ depends on $m$ and $d$) into the constant $C$.

 We now recall the argument in (\ref{sum_of_truncated_coeffs}) which allows us to estimate the above sum by an integral:
 \begin{eqnarray}
 \|\Phi_{\Upsilon(\xi)}^T \bfA_{\xi}\|_{\ell_2(N)}
 &\le&Cq^{-2m}\int_{Kh\log h}^{\infty} \max_{j=1\dots N}\left(\|p_j\|_{L_{\infty}\bigl(B(\xi,z)\bigr)}\right) 
 e^{-\nu z/h} \dif z\nonumber\\
&\le&
C q^{-2m}\int_{Kh\log h}^{\infty} 
\max(1,z^{m-1}) 
e^{-\nu z/h} \dif z\nonumber\\
&\le &
C \rho^{2m} h^{K\nu/2 + d-2m}.\label{PhiTA_int}
\end{eqnarray}
In (\ref{PhiTA_int}) we have used a change to polar coordinates, as in Lemma \ref{trunc_coeffs}.

Estimate (\ref{truncation_error}) follows by combining Lemma \ref{Gram_assumption} (using $r = Kh|\log h|$) with (\ref{PhiTA_int}).
\end{proof}

As in the positive definite case, we are able to control the truncation error measured in suitable smoothness norms 
- the only requirement is that the kernel is bounded. 
In the conditionally positive definite case, 
the kernel may be unbounded, so we measure the norm over the bounded region $\Omega$.
Specifically, the surface spline  $\phi_m\in W_{p,\mathrm{loc}}^{\sigma}(\reals^d)$ for all $\sigma<2m-d+\frac{d}{p}$ 
(as well as $C_{\mathrm{loc}}^{\sigma}(\reals^d)$ for
    $\sigma< 2m-d$).
There is $\Gamma<\infty$ (depending on  $\sigma$, $p$, $m$ and $\Omega$) so that for $\zeta\in \Om$, 
$\| \phi_m(\cdot-\zeta)\|_{W_{p}^\sigma(\Omega)}\le \Gamma$.

\begin{lemma}\label{trunc_sob_error}
for $1\le p< \infty$ and 
  $\sigma < 2m-d + \frac{d}{p}$   
    $$\| \wch_{\xi} - \chi_{\xi} \|_{W_p^{\sigma}({\Omega})} \le C\rho^{2m+d/2}h^{ K\nu/2 +1-3m+d }|\log h|^{d/2+1-m}.$$
    with $C = C(\sigma,m,p,\Omega)$.
    
    A similar result holds for $p=\infty$, replacing $W_p^{\sigma}(\Omega)$ by $C^{\sigma}(\overline{\Omega})$ for
    $\sigma< 2m-d$.
    \end{lemma}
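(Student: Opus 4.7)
The plan is to decompose $\wch_\xi-\chi_\xi$ into the two modifications made in going from the full to the truncated Lagrange function: the coefficients outside of $\Upsilon(\xi)$ are dropped, and the retained coefficients are perturbed so that the side conditions hold. Writing
$$
\chi_\xi-\wch_\xi \;=\; \sum_{\zeta\in \X\setminus \Upsilon(\xi)} A_{\zeta,\xi}\,\phi_m(\cdot-\zeta) \;+\; \sum_{\zeta\in\Upsilon(\xi)} \bigl(A_{\zeta,\xi}-\wA_{\zeta,\xi}\bigr)\,\phi_m(\cdot-\zeta),
$$
(the polynomial parts cancel by construction), I would apply the triangle inequality in $W_p^\sigma(\Omega)$ and invoke the uniform bound $\sup_{\zeta\in\Om}\|\phi_m(\cdot-\zeta)\|_{W_p^\sigma(\Omega)}\le \Gamma$ noted in the paragraph preceding the statement. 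The problem then reduces to estimating two scalar sums of coefficients.

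The first (tail) sum is controlled directly by Lemma~\ref{trunc_coeffs}, giving a bound of order $\rho^{2m} h^{K\nu/2+d-2m}$. For the second sum, I would use Cauchy--Schwarz:
$$
\sum_{\zeta\in\Upsilon(\xi)}|A_{\zeta,\xi}-\wA_{\zeta,\xi}|\;\le\;\bigl(\#\Upsilon(\xi)\bigr)^{1/2}\,\|\bfA_\xi-\wbfA_\xi\|_{\ell_2(\Upsilon(\xi))}.
$$
Since the balls $B(\zeta,q)$, $\zeta\in\Upsilon(\xi)$, are pairwise disjoint and lie in a slight enlargement of $B(\xi,Kh|\log h|)$, a simple volume argument yields $\#\Upsilon(\xi)\le C K^{d}\rho^{d}|\log h|^{d}$. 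Combining this count with Lemma~\ref{ell2upsilon} gives an upper bound of the form $C\,\rho^{2m+d/2} h^{K\nu/2+1-3m+d}|\log h|^{d/2+1-m}$ for the second sum.

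Finally I would compare the two contributions. The difference of the $h$-exponents is $(K\nu/2+d-2m)-(K\nu/2+1-3m+d)=m-1\ge 0$, so for $h$ small the tail term is dominated by the perturbation term (up to a constant, and absorbing the $|\log h|$ factors, which only help since $|\log h|\ge 1$). This yields the claimed bound. For $p=\infty$, the identical argument applies verbatim once $W_p^\sigma(\Omega)$ is replaced by $C^\sigma(\overline\Omega)$, using the analogous local bound on $\|\phi_m(\cdot-\zeta)\|_{C^\sigma(\overline\Omega)}$ valid for $\sigma<2m-d$.

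The main obstacle is bookkeeping rather than any genuine difficulty: one must track the $\rho$- and $|\log h|$-factors carefully through the count of $\#\Upsilon(\xi)$ and the two-term decomposition, and confirm that the perturbation term (which carries the extra $\rho^{d/2}|\log h|^{d/2+1-m}$) is indeed the dominant one, so that the tail estimate of Lemma~\ref{trunc_coeffs} is automatically absorbed.
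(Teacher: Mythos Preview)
Your proposal is correct and follows essentially the same approach as the paper: the same two-term decomposition, the uniform bound $\Gamma=\sup_{\zeta\in\Om}\|\phi_m(\cdot-\zeta)\|_{W_p^\sigma(\Omega)}$, Lemma~\ref{trunc_coeffs} for the tail, and Cauchy--Schwarz together with Lemma~\ref{ell2upsilon} and the count $\#\Upsilon(\xi)\le C\rho^d|\log h|^d$ for the perturbation term. Your explicit remark that the perturbation term dominates (since the $h$-exponents differ by $m-1\ge 0$) is a nice clarification the paper leaves implicit.
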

    \begin{proof}
    The Sobolev estimate holds by considering
$$\|\wch_{\xi} - \chi_{\xi}\|_{W_{p}^\sigma(\Omega)} \le
 \sum_{\zeta \in \Upsilon(\xi)} |A_{\xi,\zeta}- \widetilde{A}_{\xi,\zeta}| \| \phi_m(\cdot-\zeta)\|_{W_{p}^\sigma(\Omega)}
+\sum_{\zeta \notin \Upsilon(\xi)} |A_{\xi,\zeta}| \| \phi_m(\cdot-\zeta)\|_{W_{p}^\sigma(\Omega)}.$$

The first term can be bounded by introducing the constant $\Gamma := \max_{\zeta\in \Om} \| \phi_m(\cdot-\zeta)\|_{W_{p}^\sigma(\Omega)} <\infty$,
which gives 
$ \sum_{\zeta \in \Upsilon(\xi)} |A_{\xi,\zeta}- \widetilde{A}_{\xi,\zeta}| \|\phi_m(\cdot-\zeta)\|_{W_{p}^\sigma(\Omega)}
  \le \Gamma \|\bfA_{\xi} - \wbfA_{\xi}\|_{\ell_1(\Upsilon(\xi))} $.
Employing (\ref{truncation_error}) yields
\begin{eqnarray*}
 \sum_{\zeta \in \Upsilon(\xi)} 
 |A_{\xi,\zeta}- \widetilde{A}_{\xi,\zeta}| \|\phi_m(\cdot-\zeta)\|_{W_{p}^\sigma(\Omega)}
  &\le&C\Gamma\rho^{2m}
  h^{K\nu/2+1 -3m+d}  |\log h|^{1-m}(\#\Upsilon(\xi))^{d/2} 
  \\
&\le&C\Gamma \rho^{2m+d/2} h^{\frac{K\nu}{2} +1-3m+d}  |\log h|^{1-m+d/2}.  
\end{eqnarray*}
For the second inequality we have used the estimate
$\#\Upsilon(\xi)\le C \rho^{d} |\log h|^{d}$.

The second term is bounded by
$
\sum_{\zeta \notin \Upsilon(\xi)} |A_{\xi,\zeta}| \|\phi_m(\cdot-\zeta)\|_{W_p^\sigma(\Omega)}
\le
\Gamma \sum_{\zeta \notin \Upsilon(\xi)} |A_{\xi,\zeta}| $
which can be further treated with Lemma \ref{trunc_coeffs}
 to obtain 
 $
\sum_{\zeta \notin \Upsilon(\xi)} |A_{\xi,\zeta}| \|\phi_m(\cdot-\zeta)\|_{W_p^\sigma(\Omega)}
\le
 C\Gamma \rho^{2m} h^{\frac{K\nu}{2}-2m+d}.
$
 \end{proof}

\subsection{Local Lagrange Functions}\label{SS:LLF}
In this section we consider a locally constructed function $b_{\xi}$.
Our main goal is to show that for $\Xi\subset \Omega$, there exist functions $b_{\xi}$ defined on
$\reals^d$, so that $\|\sum_{\xi\in\Xi}a_\xi b_{\xi} \|_{W_p^\sigma(\Omega)}\le Ch^{\frac{d}{p} - \sigma} \|\bfa\|_{\ell_p(\Xi)}$.

At this point, a standard argument  bounds the error between $b_{\xi}$ and $\wch_{\xi}$ (this argument is 
essentially the same one used on the sphere in \cite{FHNWW}). 
This works by measuring the size of 
$b_{\xi} - \wch_{\xi} \in S(\Upsilon(\xi))$.

%
%
%
\subsubsection{Bounds for local Lagrange functions:  Mat{\'e}rn  functions}\label{SSS:Local_Matern}
For the positive definite case, the argument is fairly elementary.
For $\zeta\in \Upsilon(\xi)$, let $y_{\zeta} := b_{\xi}(\zeta) - \wch_{\xi} (\zeta)$.
Observe that $b_{\xi}- \wch_{\xi} = \sum_{\zeta\in \Upsilon(\xi)} a_{\zeta} k_m(\cdot-\zeta) \in S(\Upsilon(\xi))$, 
where $\bfa = (a_{\zeta})$ and $\bfy = (y_{\zeta})$ are related by
 $\K_{\Upsilon(\xi)} \bfa = \bfy$. 
 The matrix $(\K_{\Upsilon(\xi)})^{-1}$ has entries $(A_{\zeta,\eta} )_{\zeta,\eta\in\Upsilon(\xi)}$.
 
 For a kernel of order $m$,  the entries of the matrix $\mathcal{A} = (A_{\zeta,\eta} )_{\zeta,\eta\in\Upsilon(\xi)}$
 can be estimated by (\ref{basiccoeffs}): 
 $|A_{\zeta,\eta}| \le  Cq^{d-2m}$. 
 It follows that $(\K_{\Upsilon(\xi)})^{-1} $ has
 $\ell_1$ matrix norm 
 $$\left\|\left(\K_{\Upsilon(\xi)}\right)^{-1}\right \|_{1\to 1} \le C (\# \Upsilon(\xi)) q^{d-2m}
 \le C  \rho^{2m} |\log h|^d h^{d-2m}.$$
 (Here we have used the estimate $\# \Upsilon(\xi)\le C \rho^d |\log h|^d$.) 
Consequently
$\|\bfy\|_1 \le  
(\# \Upsilon(\xi)) 
\|\bfy\|_{\infty} $. 

Because $y_{\zeta} = \chi_{\xi}(\zeta) - \wch_{\xi} (\zeta)$ for $\zeta\in  \Upsilon(\xi)$ and $\|\chi_{\xi} - \wch_{\xi}\|_{\infty}\le C \|\chi_{\xi} - \wch_{\xi}\|_{W_2^m(\reals^d)}$ we have
$$
\sum_{\zeta\in \Upsilon(\xi)} |a_\zeta| 
\le 
\left\|\left(\K_{\Upsilon(\xi)}\right)^{-1}\right \|_{1\to 1}
\|\bfy\|_1
\le
 C  \rho^{2m+d}h^{d-2m}   |\log h|^{2d} \|\chi_{\xi} - \wch_{\xi}\|_{W_2^m(\reals^d)}.
$$

For a generic norm $\|\cdot \|_Z$ for which $\max_{z \in \Om} \|k_m(\cdot - z)\|_Z \le \Gamma$ we have
$ \|b_{\xi} - \wch_{\xi}\|_Z\le \Gamma \sum_{\zeta} |a_{\zeta}|$.
We now have the counterpart to Lemma \ref{Mat}, which shows that (\ref{compatible}) holds for Mat{\'e}rn kernels.
\begin{lemma}\label{Matern_compare}
For $k_m = \kappa_m$, and  for $1\le p\le \infty$ and $\sigma < 2m -d +d/p$ we have
\begin{equation}
\left\|b_{\xi} -\chi_{\xi}\right \|_{W_p^{\sigma}(\reals^d)}
\le
C \rho^{4m+d}h^{K\nu/2 +2d -4m}|\log h|^{2d}
\end{equation}
with $C=C(m,d)$.
\end{lemma}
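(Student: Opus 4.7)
The plan is to split $b_\xi - \chi_\xi$ through the truncated Lagrange intermediary $\wch_\xi$, using the triangle inequality
\[
\|b_\xi - \chi_\xi\|_{W_p^\sigma(\reals^d)} \le \|b_\xi - \wch_\xi\|_{W_p^\sigma(\reals^d)} + \|\wch_\xi - \chi_\xi\|_{W_p^\sigma(\reals^d)}.
\]
The second term is already controlled by Lemma \ref{Mat}, which gives a bound of the form $C\rho^{2m}h^{K\nu/2+d-2m}$, and this will turn out to be the smaller of the two contributions, so the main work is estimating the first term.

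For the first term, I would use the fact (already set up in the paragraph preceding the lemma) that $b_\xi - \wch_\xi \in S(\Upsilon(\xi))$ has the kernel expansion $\sum_{\zeta\in\Upsilon(\xi)}a_\zeta\kappa_m(\cdot-\zeta)$, where $\K_{\Upsilon(\xi)}\bfa = \bfy$ with $y_\zeta = \chi_\xi(\zeta) - \wch_\xi(\zeta)$ (using $b_\xi(\zeta) = \chi_\xi(\zeta)$ for $\zeta\in\Upsilon(\xi)$). The $\ell_1\to\ell_1$ estimate on $\K_{\Upsilon(\xi)}^{-1}$ derived above, combined with $\|\chi_\xi-\wch_\xi\|_\infty \le C\|\chi_\xi-\wch_\xi\|_{W_2^m(\reals^d)}$ via Sobolev embedding (valid since $m>d/2$), yields
\[
\sum_{\zeta\in\Upsilon(\xi)}|a_\zeta| \le C\rho^{2m+d}h^{d-2m}|\log h|^{2d}\,\|\chi_\xi - \wch_\xi\|_{W_2^m(\reals^d)}.
\]
Since translates of $\kappa_m$ have uniformly bounded $W_p^\sigma(\reals^d)$ norm (by translation invariance of the Sobolev norm and $\kappa_m\in W_p^\sigma(\reals^d)$ for $\sigma<2m-d+d/p$), taking $\|\cdot\|_Z = \|\cdot\|_{W_p^\sigma(\reals^d)}$ in the generic estimate $\|b_\xi-\wch_\xi\|_Z \le \Gamma\sum_\zeta |a_\zeta|$ delivers
\[
\|b_\xi - \wch_\xi\|_{W_p^\sigma(\reals^d)} \le C\rho^{2m+d}h^{d-2m}|\log h|^{2d}\,\|\chi_\xi - \wch_\xi\|_{W_2^m(\reals^d)}.
\]

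To close the loop, I would invoke Lemma \ref{Mat} with $p=2$ and smoothness exponent $m$ (which satisfies $m < 2m - d/2$ since $m>d/2$) to bound $\|\chi_\xi-\wch_\xi\|_{W_2^m(\reals^d)} \le C\rho^{2m}h^{K\nu/2+d-2m}$. Multiplying, the first term contributes $C\rho^{4m+d}h^{K\nu/2+2d-4m}|\log h|^{2d}$, which dominates the second term, giving the stated bound. The only step requiring any care is verifying that $m$ lies in the admissible range of Lemma \ref{Mat} for $p=2$ (it does, under the standing hypothesis $m>d/2$), and that the Sobolev embedding $W_2^m(\reals^d)\hookrightarrow C^0(\reals^d)$ used to pass from an $\ell_\infty$ estimate on $\bfy$ to an $L_2$-Sobolev bound holds — both of which are immediate consequences of $m>d/2$. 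The main obstacle is really just bookkeeping of the powers of $\rho$, $h$, and $|\log h|$ accumulated from the three ingredients ($\ell_1$-norm of $\K^{-1}$, cardinality of $\Upsilon(\xi)$, and the $W_2^m$ truncation estimate).
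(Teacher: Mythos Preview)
Your proposal is correct and follows essentially the same approach as the paper: the paper's proof is precisely the discussion immediately preceding the lemma statement (the $\ell_1\to\ell_1$ bound on $\K_{\Upsilon(\xi)}^{-1}$, the Sobolev embedding $\|\chi_\xi-\wch_\xi\|_\infty\le C\|\chi_\xi-\wch_\xi\|_{W_2^m}$, and the generic-norm estimate $\|b_\xi-\wch_\xi\|_Z\le\Gamma\sum_\zeta|a_\zeta|$), with Lemma~\ref{Mat} applied at $p=2$, $\sigma=m$ to close. Your bookkeeping of the powers of $\rho$, $h$, and $|\log h|$ is accurate, and you correctly identify that the $\|b_\xi-\wch_\xi\|$ contribution dominates the $\|\wch_\xi-\chi_\xi\|$ contribution.
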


Setting $|\log h|^{2d} \le C h^{-1}$ (either by finding a sufficiently small $h^*$ so that this holds for $h<h^*$,
 or by increasing the constant, or both), and by employing a simple interpolation inequality, we have
\begin{equation}\label{Sob_loc_full_error}
\left\|b_{\xi} -\chi_{\xi}\right \|_{W_p^{\sigma}(\reals^d)}
\le C  
\rho^{4m+d}h^J, \ J=K\nu/2 +2d -4m-1.
\end{equation}

%
%
%
\subsubsection{Bounds for local Lagrange functions: Surface splines}\label{SSS:Local_surface}
As in the previous section,  we are guided by the estimates for local Lagrange functions on the sphere
\cite[Proposition 5.2]{FHNWW}.

In this case we have 
$\wch_{\xi} - b_{\xi} 
= 
\sum_{\zeta\in\Upsilon(\xi)} a_{\zeta} \phi_m(\cdot -\zeta) 
+
\sum_{j=1}^N c_j p_j\in S(\Upsilon(\xi))
$.
The vectors  $\bfa = (a_\zeta)_{\zeta\in\Upsilon(\xi)}$ and $\bfc = (c_j)_{j=1\dots N }$ are related to 
$\bfy = (y_{\zeta})_{\zeta\in \Upsilon(\xi)} = (\wch_{\zeta} - b_{\zeta})_{\zeta\in \Upsilon(\xi)}$ by
$$
\left(\begin{matrix} 
\K_{\Upsilon(\xi)} &\Phi\\ \Phi^T &0_{N\times N} 
\end{matrix}\right)
\begin{pmatrix}\bfa \\ \bfc\end{pmatrix} 
=\begin{pmatrix} \bfy\\ 0_{N\times 1}\end{pmatrix}
$$
where $\K_{\Upsilon(\xi)}$ is the collocation matrix and  $\Phi$ is the Vandermonde matrix introduced in (\ref{Phi}).
The norms of $\bfa$ and $\bfc$ can be controlled by  $\|\bfy\|_{\ell_2(\Upsilon(\xi))}$. 
This is demonstrated in \cite[Proposition 5.2]{FHNWW}, which shows that
$
\|\bfa\|_{\ell_2(\Upsilon(\xi))} \le \vartheta^{-1} \|\bfy\|_{\ell_2(\Upsilon(\xi))}
$
where $\vartheta$ is the minimal positive eigenvalue of $P^{\perp}\K_{\Upsilon(\xi)}P^{\perp}$.
Recall that  $P^{\perp}= \mathrm{Id} - P$ and 
$P = \Phi (\Phi^T \Phi)^{-1} \Phi^T$ is the projector introduced in 
Section \ref{CPD_truncated}.

We make the following observation, which is \cite[Proposition 5.2]{FHNWW}:
\begin{eqnarray}
\|\bfa\|_{\ell_2(\Upsilon(\xi))} &\le& \vartheta^{-1} \|\bfy\|_{\ell_2(\Upsilon(\xi))} \qquad \text{and}\label{a_bound}
\\ 
\|\bfc \|_{\ell_2(N)} &\le& 
2
\max_{\eta,\zeta\in \Upsilon(\xi)}|\phi_m(\eta-\zeta)|
 \|\G_{\Upsilon(\xi)}^{-1}\|^{1/2}
\vartheta^{-1}
(\# \Upsilon(\xi))
\|\bfy\|_{\ell_2(\Upsilon(\xi))}\label{c_bound}
\end{eqnarray}
It is possible to estimate the size of $\vartheta$ by considering the matrix of kernel coefficients for the Lagrange functions
$b_{\eta,\Upsilon(\xi)} 
= 
\sum_{\zeta\in\Upsilon(\xi)}A_{\zeta,\eta} k_m(\cdot,\zeta) +\sum_{j=1}^N B_{j,\eta}\phi_j$.
\begin{lemma}\label{CPD_Matrix_norm}
For $\vartheta$, the minimal positive eigenvalue of $P^{\perp}\K_{\Upsilon(\xi)}P^{\perp}$, we have
$\vartheta^{-1} = \|\A\|_{2\to 2}$, 
where $\A = (A_{\zeta,\eta})_{\zeta,\eta\in\Upsilon(\xi)}$ is the matrix of kernel coefficients for
the Lagrange functions in $ S(\Upsilon(\xi))$. 
\end{lemma}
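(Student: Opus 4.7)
The plan is to identify $\A$ as the (Moore--Penrose style) pseudoinverse of $M := P^{\perp}\K_{\Upsilon(\xi)} P^{\perp}$ restricted to $\mathrm{range}(P^{\perp})$, and then read off the operator norm from the spectrum.

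\textbf{Step 1: Saddle-point equations for the columns of $\A$.} For each $\eta\in\Upsilon(\xi)$, the Lagrange function $b_{\eta,\Upsilon(\xi)}$ corresponds to the $\eta$-th columns $\mathbf{A}_\eta$ and $\mathbf{B}_\eta$ of $\A$ and of $\mathbf{B}:=(B_{j,\eta})$, which satisfy the saddle-point system
\begin{equation*}
\K_{\Upsilon(\xi)}\mathbf{A}_\eta + \Phi \mathbf{B}_\eta = \mathbf{e}_\eta,\qquad \Phi^{T}\mathbf{A}_\eta = 0.
\end{equation*}
The second equation says exactly that $\mathbf{A}_\eta \in \ker \Phi^{T} = \mathrm{range}(P^{\perp})$, so $P^{\perp}\mathbf{A}_\eta = \mathbf{A}_\eta$.

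\textbf{Step 2: Reduction by applying $P^{\perp}$.} Since $P^{\perp}\Phi = 0$, applying $P^{\perp}$ to the first equation yields $P^{\perp}\K_{\Upsilon(\xi)} \mathbf{A}_\eta = P^{\perp}\mathbf{e}_\eta$, and inserting $P^{\perp}\mathbf{A}_\eta = \mathbf{A}_\eta$ on the left gives $M\mathbf{A}_\eta = P^{\perp}\mathbf{e}_\eta$. Varying $\eta$, this is the matrix identity
\begin{equation*}
M\A = P^{\perp}.
\end{equation*}
Moreover, the identity (\ref{inner_product_coefficients}) (applied to the Lagrange functions for $S(\Upsilon(\xi))$) shows $A_{\zeta,\eta} = \langle b_{\zeta,\Upsilon(\xi)}, b_{\eta,\Upsilon(\xi)}\rangle_{\caln(k_m)}$, so $\A$ is symmetric; combined with $\mathrm{range}(\A)\subset \mathrm{range}(P^{\perp})$, this gives $\A = P^{\perp}\A P^{\perp}$.

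\textbf{Step 3: Identify $\A$ as a pseudoinverse.} On $\mathrm{range}(P^{\perp})$ the operator $M$ is positive definite (conditional positive definiteness of $\phi_m$ with respect to $\Pi_{m-1}$ gives $v^{T}\K_{\Upsilon(\xi)} v > 0$ for every nonzero $v$ with $\Phi^{T}v = 0$, and on such $v$ one has $\langle Mv,v\rangle = \langle \K_{\Upsilon(\xi)} v, v\rangle$), so its minimal positive eigenvalue is $\vartheta$. For $v\in \mathrm{range}(P^{\perp})$ the equation $M\A = P^{\perp}$ yields $M\A v = v$, so $\A v = M^{-1}v$ (with the inverse taken on the invariant subspace $\mathrm{range}(P^{\perp})$). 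For $v\in \mathrm{range}(P)$ we have $\A v = P^{\perp}\A P^{\perp}v = 0$. Hence $\A$ is block-diagonal with respect to the orthogonal decomposition $\RR^{\Upsilon(\xi)} = \mathrm{range}(P)\oplus \mathrm{range}(P^{\perp})$, acting as $0$ on the first summand and as $M^{-1}$ on the second.

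\textbf{Step 4: Compute the operator norm.} Because $\A$ is symmetric, its spectral radius equals its spectral norm, and its eigenvalues are $0$ (on $\mathrm{range}(P)$) together with the reciprocals of the positive eigenvalues of $M$ (on $\mathrm{range}(P^{\perp})$). Therefore
\begin{equation*}
\|\A\|_{2\to 2} = \max_{i}\lambda_i(M)^{-1} = \vartheta^{-1},
\end{equation*}
which is the claimed identity.

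There is no single hard step here; the only thing to be careful about is the bookkeeping between $\mathrm{range}(P)$ and $\mathrm{range}(P^{\perp})$, and invoking CPD of $\phi_m$ with respect to $\Pi_{m-1}$ exactly to guarantee that $M$ is invertible on $\mathrm{range}(P^{\perp})$ so that $\vartheta > 0$.
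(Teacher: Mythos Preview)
Your proof is correct and follows essentially the same approach as the paper: both derive the identity $M\A = P^{\perp}$ from the saddle-point system (the paper writes this as $P^{\perp} = P^{\perp}\K_{\Upsilon(\xi)}P^{\perp}\A$), observe that $\A$ and $M$ share range $\mathrm{ran}\,P^{\perp}$ and kernel $\mathrm{ran}\,P$, and conclude that the nonzero spectra are reciprocal. Your treatment is slightly more explicit in invoking the symmetry of $\A$ (via the native-space inner-product identity) and the CPD property to guarantee $\vartheta>0$, both of which the paper leaves implicit.
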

 \begin{proof}
Writing
$\B = (B_{j,\eta})_{\substack{j=1\dots N\\ \eta\in \Upsilon(\xi)}}$
it follows that
$\K_{\Upsilon(\xi)} \A + \Phi \B = \mathrm{Id}$. 
From this we have $P^{\perp} = P^{\perp} \K_{\Upsilon(\xi)} \A$
and $\ker \A \subset \ker P^{\perp}$. 
On the other hand, each column of $\A$ satisfies the
side condition 
$\sum_{\eta\in\Upsilon(\xi)}A_{\zeta,\eta} p(\eta) = 0$ 
for all $p\in \Pi$, so $\mathrm{ran } \A \subset \mathrm{ran } P^{\perp}$. 
From this it follows that 
$\ker \A = \ker P^{\perp}$ and 
$\mathrm{ran } \A = \mathrm{ran } P^{\perp}$.

Because $P^\perp \A = \A$ 
we have 
$
P^{\perp} 
= 
P^{\perp} \K_{\Upsilon(\xi)} \A 
= 
P^{\perp} \K_{\Upsilon(\xi)} P^{\perp} \A
$,
and the nonzero spectrum of $\A$ is the reciprocal of the nonzero spectrum of 
$P^{\perp} \K_{\Upsilon(\xi)} P^{\perp} $.
In other words, 
$\vartheta^{-1} = \max_{\lambda\in \sigma(\A)} |\lambda|$.
\end{proof}

Applying Gershgorin's theorem to $\A$, whose entries are $A_{\zeta, \eta} = \langle b_{\zeta,\Upsilon(\xi)}, b_{\eta,\Upsilon(\xi)}\rangle$ 
and therefore satisfy $|A_{\zeta,\eta}| \le C q^{d-2m}$, we have 
$\vartheta^{-1}  \le C\bigl(1+\#\bigl(\Upsilon(\xi)\bigr)\bigr) q^{d-2m}$. By (\ref{a_bound}) we have
\begin{equation}
\label{Gersh}
\|\bfa\|_{\ell_2(\Upsilon(\xi))} \le C \rho^{2m} h^{d-2m} |\log h|^d \|\bfy\|_{\ell_2(\Upsilon(\xi))}.
\end{equation}

Using Lemma \ref{Gram_assumption}, we have that 
$\|\G_{\Upsilon(\xi)}^{-1}\|^{1/2}\le C_{m,d}(K h |\log h|)^{-(m-1)} $,
while $ \vartheta^{-1} \le C_{m,d} \rho^{2m} h^{d-2m} |\log h|^d$
and
$(\# \Upsilon(\xi))\le C_{m,d} \rho^{2m} |\log h|^d$.
Applying  (\ref{c_bound}) gives
\begin{eqnarray}\label{c_estimate}
\|\bfc\|_{\ell_2(N)} &\le&
C_{m,d}
(2K h |\log h|)^{2m-d}
\left((K h |\log h|)^{-(m-1)}   \right)
\left( \rho^{2m} h^{d-2m} |\log h|^d\right)
(\rho^d |\log h|^d)
\nonumber \\
&\le& C_{m,d}
\rho^{2m+d}
h^{-(m-1)}
|\log h|^{m+1+d}
\|\bfy\|_{\ell_2(\Upsilon(\xi))}
\end{eqnarray}

We are now in a position to prove that 
 (\ref{compatible}) 
holds for surface splines.
\begin{lemma}\label{ss_comparison}
Let $k_m = \phi_m$, the surface spline RBF on $\reals^d$ and let $J>0$. For $\Xi\subset \Omega$,
form the local Lagrange functions $b_{\xi} \in \Upsilon(\xi)$, with $\Upsilon(\xi) = \X\cap B(\xi, K h|\log h|)$, where
 $J =K\frac{\nu}{2} -5m +d+1$.
Then for  $1\le p\le \infty$ and  
  $\sigma < 2m-d + \frac{d}{p}$,
and for sufficiently small $h$,      
$$\|b_{\xi} - \chi_{\xi}\|_{W_p^{\sigma}(\Omega)}\le C\rho^{4m+2d} h^J$$
with $C= C(\sigma,m,p,\Omega)$.
\end{lemma}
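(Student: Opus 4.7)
The plan is to use the truncated Lagrange function $\wch_\xi\in S(\Upsilon(\xi))$ as a bridge and apply the triangle inequality,
$$\|b_\xi - \chi_\xi\|_{W_p^\sigma(\Omega)} \le \|b_\xi - \wch_\xi\|_{W_p^\sigma(\Omega)} + \|\wch_\xi - \chi_\xi\|_{W_p^\sigma(\Omega)},$$
so that the second summand is controlled by Lemma~\ref{trunc_sob_error} (with $h$-exponent strictly better than the target $J$), and the work reduces to estimating $\|b_\xi - \wch_\xi\|_{W_p^\sigma(\Omega)}$. Since both $b_\xi$ and $\wch_\xi$ lie in $S(\Upsilon(\xi))$, I will write
$$b_\xi - \wch_\xi = \sum_{\zeta\in\Upsilon(\xi)} a_\zeta \phi_m(\cdot-\zeta) + \sum_{j=1}^N c_j p_j,$$
and, using the uniform bound $\|\phi_m(\cdot-\zeta)\|_{W_p^\sigma(\Omega)}\le \Gamma$ for $\zeta\in\Om$ together with uniform $W_p^\sigma(\Omega)$ bounds on the fixed basis $p_1,\dots,p_N$, obtain $\|b_\xi-\wch_\xi\|_{W_p^\sigma(\Omega)}\le C\big(\|\bfa\|_{\ell_1(\Upsilon(\xi))} + \|\bfc\|_{\ell_1(N)}\big)$.

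The coefficient vectors $\bfa,\bfc$ solve the saddle-point system whose right-hand side is $\bfy=(b_\xi(\zeta)-\wch_\xi(\zeta))_{\zeta\in\Upsilon(\xi)}$. The key observation is that $b_\xi(\zeta)=\chi_\xi(\zeta)=\delta(\xi,\zeta)$ for every $\zeta\in\Upsilon(\xi)$, so in fact $y_\zeta = \wch_\xi(\zeta)-\chi_\xi(\zeta)$, whence
$$\|\bfy\|_{\ell_2(\Upsilon(\xi))}\le (\#\Upsilon(\xi))^{1/2}\|\wch_\xi-\chi_\xi\|_{L_\infty(\Omega)}$$
is controlled by the $C^0(\overline{\Omega})$-case of Lemma~\ref{trunc_sob_error}. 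Feeding this into \eqref{a_bound} and \eqref{c_bound} and invoking Lemma~\ref{CPD_Matrix_norm}, I will then combine the Gershgorin bound $\vartheta^{-1}\le C\rho^{2m}h^{d-2m}|\log h|^d$ already recorded in \eqref{Gersh}, the inverse-Gram estimate $\|\G_{\Upsilon(\xi)}^{-1}\|^{1/2}\le C(Kh|\log h|)^{-(m-1)}$ from Lemma~\ref{Gram_assumption} (applied with $r=Kh|\log h|$), the pointwise estimate $\max_{\eta,\zeta\in\Upsilon(\xi)}|\phi_m(\eta-\zeta)|\le C(Kh|\log h|)^{2m-d}$ (with an extra logarithmic factor in even dimension), and $\#\Upsilon(\xi)\le C\rho^d|\log h|^d$. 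These are the same ingredients already compiled in deriving \eqref{Gersh} and \eqref{c_estimate}.

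Bookkeeping the powers of $\rho$, $h$, and $|\log h|$ through the two summands produces an $a$-term of size $\rho^{4m+3d/2}h^{K\nu/2+1-5m+2d}$ and a $c$-term of size $\rho^{4m+2d}h^{K\nu/2+2-4m+d}$, each modulated by $|\log h|^{O(1)}$; both $h$-exponents exceed $J=K\nu/2-5m+d+1$ (using $m>d/2$), so the surplus in $h$ absorbs the polylogarithmic factors once $h$ is taken sufficiently small, exactly in the spirit of the step $|\log h|^{2d}\le Ch^{-1}$ that leads to \eqref{Sob_loc_full_error} in the Mat{\'e}rn case. The compounded $\rho$-powers collapse to at most $\rho^{4m+2d}$, and the triangle inequality with the much smaller contribution $\|\wch_\xi-\chi_\xi\|_{W_p^\sigma(\Omega)}$ from Lemma~\ref{trunc_sob_error} yields the claim. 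The main obstacle is the polynomial block: the product $\|\G_{\Upsilon(\xi)}^{-1}\|^{1/2}\vartheta^{-1}$ arising in \eqref{c_bound} couples two quantities each blowing up in $h$, and only the sharp exponent $-2(m-1)$ in Lemma~\ref{Gram_assumption} keeps the product polynomial in $h^{-1}|\log h|$—this is precisely what forces the surface-spline exponent $J=K\nu/2-5m+d+1$ to be weaker than its Mat{\'e}rn counterpart in Lemma~\ref{Matern_compare}.
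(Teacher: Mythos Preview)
Your proposal is correct and follows essentially the same route as the paper: the triangle inequality through $\wch_\xi$, Lemma~\ref{trunc_sob_error} for $\|\wch_\xi-\chi_\xi\|$, the expansion of $b_\xi-\wch_\xi$ in $S(\Upsilon(\xi))$ bounded via $\|\bfa\|_{\ell_1}+\|\bfc\|_{\ell_1}$, and the estimates \eqref{Gersh} and \eqref{c_estimate} fed with $\|\bfy\|_{\ell_\infty}\le\|\wch_\xi-\chi_\xi\|_{L_\infty}$ (using $b_\xi(\zeta)=\chi_\xi(\zeta)$ on $\Upsilon(\xi)$). Your bookkeeping of the $h$- and $\rho$-powers matches the paper's, and the absorption of the polylogarithmic factors into the surplus $h$-exponent is exactly the step the paper takes at the end.
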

 \begin{proof}
We use the triangle inequality 
$
\|b_{\xi} - \chi_{\xi}\|_{W_p^{\sigma}(\Omega)}
\le  
\|b_{\xi} - \wch_{\xi}\|_{W_p^{\sigma}(\Omega)} +
\| \wch_{\xi}-  \chi_{\xi}\|_{W_p^{\sigma}(\Omega)}$, 
noting that the second term has been estimated in Lemma \ref{ell2upsilon}, and that the first can be controlled as
$$
\|b_{\xi} - \wch_{\xi}\|_{W_p^{\sigma}(\Omega)}
 \le
\|\bfa\|_{\ell_1(\Upsilon(\xi))} \max_{z\in \Upsilon(\xi)}\|\phi_m(\cdot - z)\|_{W_p^{\sigma}(\Omega)}
+
\|\bfc\|_{\ell_1(N)} \max_{1\le j\le N}\|\phi_j(\cdot)\|_{W_p^{\sigma}(\Omega)}.
$$
From (\ref{Gersh}) we have 
$
\|\a\|_{\ell_1(\Upsilon(\xi))}
\le 
\sqrt{\#\Upsilon(\xi)} \|\a\|_{\ell_2(\Upsilon(\xi))}
$ 
and $\#\Upsilon(\xi)\le C_{m,d} \rho^d |\log h|^d$,
so
\begin{eqnarray*}
\|\bfa\|_{\ell_1(\Upsilon(\xi))} &\le& C \rho^{2m+d/2} h^{d-2m} |\log h|^{3d/2} \|\bfy\|_{\ell_\infty(\Upsilon(\xi))}\\
&\le&
C\rho^{4m+d} h^{ K\nu/2 -5m+2d+1 }|\log h|^{2d-(m-1)},
\end{eqnarray*}
where we have employed the result of Lemma \ref{trunc_sob_error} and the embedding $W_2^m\subset L_{\infty}$ to estimate
$\|\bfy\|_{\ell_\infty(\Upsilon(\xi))} \le \|b_{\xi} - \wch_{\xi}\|_{L_\infty(B(\xi,Kh|\log h|))}\le C_{d,m} \rho^{2m+d/2} h^{K\nu/2 +1-3m+d}|\log h|^{1-m+d/2}$.

Similarly, from (\ref{c_estimate}), we have
\begin{eqnarray*}
\|\bfc\|_{\ell_1(N)} 
&\le& 
C
\rho^{2m+d+d/2}
h^{-(m-1)}
|\log h|^{m+1+d+d/2}
\|\bfy\|_{\ell_\infty(\Upsilon(\xi))}\\
&\le&
C
\rho^{4m+2d}
h^{ K\nu/2 -4m+2+d}
|\log h|^{2+2d}.
\end{eqnarray*}
Because  $\max_{z\in \Upsilon(\xi)}\|\phi_m(\cdot - z)\|_{W_p^{\sigma}(\Omega)}$ and
$ \max_{1\le j\le N}\|\phi_j(\cdot)\|_{W_p^{\sigma}(\Omega)}$ are bounded by a constant $\Gamma$ which
depends only on $\Omega$, $m$, $p$ and $\sigma$, we have
$$\|b_{\xi} - \chi_{\xi}\|_{W_p^{\sigma}(\Omega)}
\le  
\Gamma C
\rho^{4m+2d}
h^{ K\nu/2 -5m+1+d}
|\log h|^{2+2d}.
$$
The lemma follows for $h$ sufficiently small that $|\log h|^{2d+2}<h^{-1}$.
 \end{proof}

%
\subsection{Bernstein type estimate for local Lagrange functions}\label{local_bernstein}

In this section we discuss the  local Lagrange ($b_{\xi}$) functions generated by $k_m$ and the centers $ \X$.
We develop partial Bernstein inequalities similar to (\ref{LP-Synthesis}), 
where for functions $\sum_{\xi\in  \Xi} a_{\xi} b_{\xi}\in \V_{\Xi}$, smoothness norms
$\|s\|_{W_p^{\sigma}}$ are controlled by an $\ell_p$ norm on the coefficients: $\|\bfa\|_{\ell_p( \Xi)}$.

\begin{theorem}\label{main_local_bernstein}
Consider the family of local Lagrange functions generated with $K>\frac{10m-2}{\nu}$. 
For  $0\le \sigma \le m-(d/2-d/p)_+$ 
when $1\le p< \infty$ (or $\sigma \in \nats$ and  $0\le \sigma<m - d/2$ when $p=\infty$), we have
\begin{equation}\label{first_inverse_estimate}
\bigg\|\sum_{\xi\in  \Xi} a_{\xi} b_{\xi}\bigg\|_{W_p^{\sigma}(\Omega)} \le C \rho^{4m+3d} h^{d/p - \sigma} 
\|\bfa\|_{\ell_p( \Xi)}
\end{equation}
where $C=C(\sigma, p, m, \Omega)$.
\end{theorem}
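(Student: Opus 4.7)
The plan is to split each expansion as
\[
\sum_{\xi\in\Xi} a_\xi b_\xi \;=\; \sum_{\xi\in\Xi} a_\xi \chi_\xi \;+\; \sum_{\xi\in\Xi} a_\xi (b_\xi - \chi_\xi)
\]
and apply the triangle inequality in $W_p^\sigma(\Omega)$. The ``full Lagrange'' piece is immediately handled by Theorem \ref{sobolev2discrete_bern}, which yields the bound $C\rho^{m+d/2+d/p}h^{d/p-\sigma}\|\bfa\|_{\ell_p(\Xi)}$. Since $m+d/2+d/p\le 4m+3d$, this piece is already of the desired form.

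For the ``correction'' piece I will use the crude estimate
\[
\bigg\|\sum_{\xi\in\Xi} a_\xi (b_\xi - \chi_\xi)\bigg\|_{W_p^\sigma(\Omega)} \le \|\bfa\|_{\ell_1(\Xi)}\, \max_{\xi\in\Xi}\|b_\xi - \chi_\xi\|_{W_p^\sigma(\Omega)},
\]
combined with the comparison results already in hand: \eqref{Sob_loc_full_error} for the Mat\'ern case and Lemma \ref{ss_comparison} for the surface spline case, both of which give $\|b_\xi-\chi_\xi\|_{W_p^\sigma(\Omega)}\le C\rho^{4m+2d}h^J$ with an exponent $J$ growing linearly in the footprint parameter $K$. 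The standard inclusion $\|\bfa\|_{\ell_1(\Xi)} \le (\#\Xi)^{1-1/p}\|\bfa\|_{\ell_p(\Xi)}$ together with the quasi-uniform cardinality bound $\#\Xi \le C\rho^d h^{-d}$ then yields
\[
\bigg\|\sum_{\xi\in\Xi} a_\xi (b_\xi - \chi_\xi)\bigg\|_{W_p^\sigma(\Omega)} \le C\rho^{4m+3d-d/p}\,h^{J-d+d/p}\|\bfa\|_{\ell_p(\Xi)}
\]
(for $p=\infty$ the same calculation applies with $\|\bfa\|_{\ell_1(\Xi)}\le \#\Xi\,\|\bfa\|_{\ell_\infty(\Xi)}$).

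It remains to check that $J$ is large enough. The explicit formulas---$J=K\nu/2+2d-4m-1$ in the Mat\'ern case and $J=K\nu/2-5m+1+d$ for surface splines---show that the hypothesis $K>(10m-2)/\nu$ is precisely what forces $J>d$ in both families. Because every admissible $\sigma\ge 0$ obeys $d-\sigma\le d$, this gives $J-d+d/p\ge d/p-\sigma$; the surplus power of $h$ is therefore harmless (using $h\le h_0<1$), and the correction term is also bounded by $C\rho^{4m+3d}h^{d/p-\sigma}\|\bfa\|_{\ell_p(\Xi)}$. Adding the two contributions proves the theorem.

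The analytical heavy lifting has been done in the preceding lemmas, so the principal obstacle is bookkeeping: one must verify that the $\rho$-prefactors in both RBF families collapse under the single envelope $\rho^{4m+3d}$, and that the linear-in-$K$ threshold is tuned for the worst admissible smoothness $\sigma=0$, which is what drives the requirement $J\ge d$ above.
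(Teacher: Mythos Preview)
Your proposal is correct and mirrors the paper's own proof essentially line for line: the same splitting $s=G+B$, Theorem~\ref{sobolev2discrete_bern} for $G$, the crude $\ell_1$ bound together with H\"older and the cardinality estimate $\#\Xi\le C\rho^d h^{-d}$ for $B$, and the comparison lemmas (Lemma~\ref{Matern_compare}/\eqref{Sob_loc_full_error} and Lemma~\ref{ss_comparison}) to secure the exponent $J$. Your observation that the worst case $\sigma=0$ drives the requirement $J\ge d$ is exactly how the paper's condition ``$J\ge d-\sigma$'' is satisfied, and the surface-spline value $J=K\nu/2-5m+1+d$ is indeed what makes the threshold $K>(10m-2)/\nu$ sharp.
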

\begin{proof}
We start with the basic splitting 
$$
s := \sum_{\xi\in  \Xi} a_{\xi} b_{\xi} 
=\bigg(  \sum_{\xi\in  \Xi} a_{\xi} \chi_{\xi}\bigg)+\bigg(\sum_{\xi\in  \Xi} a_{\xi}( b_{\xi}-\chi_{\xi} )\bigg) 
=: G+B .
$$ 
Applying the Sobolev norm gives
$\|s\|_{W_p^{\sigma}(\Omega)} \le \|G\|_{W_p^{\sigma}(\Omega)} +\|B\|_{W_p^{\sigma}(\Omega)}$.
From (\ref{LP-Synthesis}), we have 
$$\|G\|_{W_p^{\sigma}(\Omega)} \le C\rho^{m+d/2+d/p} h^{d/p-\sigma} \|\bfa\|_{\ell_p( \Xi)}
\le C\rho^{4m+3d} h^{d/p-\sigma} \|\bfa\|_{\ell_p( \Xi)}.$$ 

Taking the $L_p$ norm of $B$, we have 
$
\|\sum_{\xi\in\Xi} a_{\xi} (b_{\xi} - \chi_{\xi})\|_{W_p^{\sigma}(\Omega)}
\le
\max_{\xi\in \Xi} \bigl\| b_{\xi} - \chi_{\xi} \bigr\|_{W_p^{\sigma}(\Omega)}  \sum_{\xi\in\Xi} |a_{\xi}| 
$.
We control the $\ell_1$ norm by using H{\"o}lder's inequality $\|\a\|_1 \le (\#\Xi)^{\frac{p-1}{p}} \|\a\|_p $ and
 $\# \Xi\le C_{\Omega} \rho^d h^{-d}$.
Using Lemma \ref{ss_comparison} (or Lemma \ref{Matern_compare} in case $k_m = \kappa_m$),  we arrive at the desired inequality
\begin{equation}
\label{norm_diff_loc_full}
\|\sum a_{\xi} (b_{\xi} - \chi_{\xi})\|_{W_p^{\sigma}(\Omega)}
\le 
C \rho^{4m+3d}h^{J-d(\frac{p-1}{p})}
\|\bfa\|_{\ell_p(\Xi)} 
\le C \rho^{4m+3d}h^{d/p-\sigma}
\|\bfa\|_{\ell_p(\Xi)} 
\end{equation}
because the choice of $K$ ensures $J\ge d-\sigma$.
The theorem follows.
\end{proof}


For $s\in \V_{\Xi} = \spam_{\xi \in \Xi} b_{\xi}$ we may replace the discrete norm $\|\a\|_{\ell_p(\Xi)}$
by its equivalent $h^{-d/p}\|s\|_{L_p}$, as we now show.
%
%
\begin{proposition}\label{full_lowercomparison_local}{\bf(Local Basis Stability \& Nikolskii Inequality)} 
For every $\rho_0\ge 1$ there exists a constant  $h_0>0$ so that if $\Xi\subset \Omega$ has fill distance $h(\Xi,\Omega)\le h_0$
and mesh ratio $\rho\le \rho_0$, then
the family of local Lagrange functions generated with $K>\frac{10m-2}{\nu}$ satisfies the bounds
\begin{equation}
\label{lower_stability_estimate}
 c\left\| \bfa  \right \|_{\ell_p(\Xis)}       
\le  
 q^{-d/p} \| s \|_{L_p(\M)} \le  C\rho^{m+d/p}\left\| \bfa  \right \|_{\ell_p(\Xis)} 
\end{equation}
for all  $s = \sum_{\xi\in\Xis}a_{\xi}b_{\xi}\in \widetilde V_{\Xi}$, with $c= c(\rho, m, \Omega)$ and $C= C(\Omega,m)$.
In addition, for $1\le p,r\le \infty$, we have
\begin{equation}\label{local_Nikolskii_ineq}
\|s\|_{L_p(\Omega)} \le C q^{-d(\frac{1}{r}-\frac{1}{p})_+} \|s\|_{L_r(\Omega)}
\end{equation}
with $C=C(p,r,\rho,m,\Omega,)$.
\end{proposition}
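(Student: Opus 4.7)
The strategy is to transfer the full-Lagrange results (Proposition \ref{full_lowercomparison} and Corollary \ref{TPS_Nikolskii}) to the local Lagrange basis by treating $b_\xi - \chi_\xi$ as a small perturbation. For $s = \sum_{\xi\in\Xi} a_\xi b_\xi$ I would write
$$s = G + B, \qquad G := \sum_{\xi\in\Xi} a_\xi \chi_\xi \in V_\Xi, \qquad B := \sum_{\xi\in\Xi} a_\xi\bigl(b_\xi - \chi_\xi\bigr),$$
and estimate each piece separately. For $G$, Proposition \ref{full_lowercomparison} already supplies the bounds $c_1 q^{d/p}\|\bfa\|_{\ell_p(\Xi)}\le \|G\|_{L_p(\Omega)}\le c_2\rho^{m+d/p}q^{d/p}\|\bfa\|_{\ell_p(\Xi)}$. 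For $B$, the $\sigma=0$ case of estimate (\ref{norm_diff_loc_full}) (together with $h=\rho q$) yields
$$\|B\|_{L_p(\Omega)}\le C\rho^{4m+3d+d/p}\,h^{J-d}\,q^{d/p}\,\|\bfa\|_{\ell_p(\Xi)},$$
where $J$ can be made arbitrarily large by enlarging the footprint parameter $K$ in Definition \ref{local_l_space_def} (via Lemmas \ref{Matern_compare} and \ref{ss_comparison}).

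For the \emph{upper} stability bound I apply the triangle inequality $\|s\|_{L_p(\Omega)}\le \|G\|_{L_p(\Omega)}+\|B\|_{L_p(\Omega)}$: given $\rho_0$, I choose $h_0=h_0(\rho_0,m,\Omega)>0$ so that $C\rho^{4m+3d+d/p}h_0^{J-d}\le c_2\rho^{m+d/p}$ whenever $\rho\le\rho_0$ (possible since $J-d$ is arbitrarily large), and then the desired bound holds with $C=C(m,\Omega)$. For the \emph{lower} bound I use the reverse triangle inequality $\|s\|_{L_p(\Omega)}\ge \|G\|_{L_p(\Omega)}-\|B\|_{L_p(\Omega)}$ and shrink $h_0$ further (if necessary) so that $C\rho^{4m+3d+d/p}h_0^{J-d}\le c_1/2$; this gives $\|s\|_{L_p(\Omega)}\ge (c_1/2)q^{d/p}\|\bfa\|_{\ell_p(\Xi)}$, producing the lower constant $c=c(\rho,m,\Omega)$.

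For the Nikolskii inequality (\ref{local_Nikolskii_ineq}), I follow the argument of Corollary \ref{TPS_Nikolskii} verbatim with (\ref{lower_stability_estimate}) in place of (\ref{full_riesz_ineq}): apply the upper bound at exponent $p$, the discrete sequence inequality $\|\bfa\|_{\ell_p(\Xi)}\le (\#\Xi)^{(1/p-1/r)_+}\|\bfa\|_{\ell_r(\Xi)}$ together with $\#\Xi \sim q^{-d}$, and finally the lower bound at exponent $r$. Collecting $q$-exponents and using the identity $x-(x)_+=-(-x)_+$ gives the stated $q^{-d(1/r-1/p)_+}$.

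\textbf{Main obstacle.} The proof hinges on absorbing the $\rho^{4m+3d+d/p}$ factor from the full-vs-local comparison into the condition on $h_0$. This is precisely why the proposition permits $h_0$ to depend on the \emph{a priori} mesh-ratio bound $\rho_0$: without the latter one could not uniformly choose $K$ (and hence $h_0$) to beat down the large power of $\rho$ coming from Lemmas \ref{Matern_compare} and \ref{ss_comparison}. Once this absorption is in place, every remaining step is a routine triangle-inequality argument or a transcription of the corresponding computation for the full Lagrange basis.
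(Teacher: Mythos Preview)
Your proposal is correct and matches the paper's proof essentially line for line: the same $s=G+B$ splitting, the same use of Proposition~\ref{full_lowercomparison} for $G$, the same appeal to (\ref{norm_diff_loc_full}) for $B$, the same reverse-triangle argument with $h_0$ chosen (depending on $\rho_0$) so that the perturbation is at most $c_1/2$, and the Nikolskii inequality handled \emph{mutatis mutandis} from Corollary~\ref{TPS_Nikolskii}. The only cosmetic difference is that for the upper bound the paper simply cites Theorem~\ref{main_local_bernstein} with $\sigma=0$ (which was itself proved via the $G+B$ decomposition), whereas you redo that step explicitly; one small inaccuracy is your remark that ``$J$ can be made arbitrarily large by enlarging $K$''---here $K$ is fixed by hypothesis, and what is actually used is that $K>\tfrac{10m-2}{\nu}$ forces $J>d$, so $h^{J-d}\to 0$.
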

\begin{proof}
The upper bound follows from the previous theorem, with $\sigma=0$. To obtain the lower bound, note that $q^{-d/p} \|s\|_{L_p(\Omega)} =
q^{-d/p}  \|\sum_{\xi\in\Xi} a_{\xi} (b_{\xi} -\chi_{\xi}) + \sum_{\xi\in\Xi} a_{\xi} \chi_{\xi}\|_{L_p(\Omega)}$. Consequently, by (\ref{compatible}), \eqref{full_riesz_ineq} and \eqref{norm_diff_loc_full}, we have
\[
q^{-d/p} \|s\|_{L_p(\Omega)} 
\ge 
q^{-d/p} \left(\|\textstyle{\sum_{\xi\in\Xi}} a_{\xi} \chi_{\xi}\|_{L_p(\Omega)} -   C \rho^{4m+3d}h^{J-d(\frac{p-1}{p})}\|\bfa\|_{\ell_p(\Xi)}\right) 
\ge 
(c_1 -  C \rho^{4m+3d}h^{J-d}) \|\bfa\|_{\ell_p(\Xi)}
\] 
where $c_1=c_1(\rho,\Omega,m)$ is the constant from Proposition \ref{full_lowercomparison}. Let $h_0>0$ be such that $c_1 -  C \rho^{4m+3d}h_0^{J-d}\ge \frac12 c_1$. This guarantees the same holds for all $0<h\le h_0$.
The proof of the Nikolskii inequality is, \emph{mutatis mutandis}, that of Corollary~\ref{TPS_Nikolskii}.
\end{proof}

%
%
%
%
\section{Main results and corollaries}\label{main_results}
At this point we can prove the inverse inequality for local  Lagrange functions in $\V_{\Xi}$:

\begin{theorem}\label{main}
Suppose $\Omega\subset\reals^d$ is a bounded Lipschitz region.
For $m>d/2$
and for every $\rho_0>0$ there exists a constant $h_0>0$, 
so that if $\Xi\subset \Omega$ has mesh ratio $\rho\le \rho_0$,  fill distance $h\le h_0$, 
and if $\X\subset \Om$ is a suitable extension of 
$\Xi$ 
(as mentioned in Remark \ref{True_Extension}) 
then 
for all $s = \sum_{\xi\in\Xi}a_{\xi}b_{\xi}\in \V_{\Xi}$ the following holds.  
For  $1\le p< \infty$ and all $0\le \sigma \le m-(d/2-d/p)_+$, 
or for $p= \infty$ and an integer $\sigma < m-d/2$, we have
$$
\left\|s\right\|_{W_p^{\sigma}({\Omega})} 
\le   
C   h^{-\sigma} \|s\|_{L_p(\M)}
$$
with $C = C(m, \rho, \Omega)$.
\end{theorem}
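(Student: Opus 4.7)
The plan is to combine the two Bernstein-type estimates already established for the local Lagrange basis: the discrete Bernstein inequality of Theorem~\ref{main_local_bernstein}, which bounds the Sobolev norm of $s$ in terms of the $\ell_p$ norm of its coefficients, and the lower stability estimate of Proposition~\ref{full_lowercomparison_local}, which bounds the $\ell_p$ norm of the coefficients in terms of the $L_p$ norm of $s$. The product of the two estimates will produce the desired continuous Bernstein inequality, with the $h^{d/p}$ factor from one estimate cancelling against the $q^{-d/p}$ factor from the other (up to a power of $\rho$, which is controlled by $\rho_0$).

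Concretely, fix $\rho_0\ge 1$ and let $h_0>0$ be the minimum of the thresholds required by Theorem~\ref{main_local_bernstein} and Proposition~\ref{full_lowercomparison_local}. Assume $\Xi\subset\Omega$ satisfies $\rho\le\rho_0$ and $h=h(\Xi,\Omega)\le h_0$, and let $s=\sum_{\xi\in\Xi}a_\xi b_\xi\in\V_\Xi$. First I would apply Theorem~\ref{main_local_bernstein}, valid under the stated restrictions on $\sigma$ and $p$ and the assumption $K>(10m-2)/\nu$ on the footprint parameter, to get
\begin{equation*}
\|s\|_{W_p^\sigma(\Omega)}\le C_1\rho^{4m+3d}\,h^{d/p-\sigma}\,\|\bfa\|_{\ell_p(\Xi)},
\end{equation*}
with $C_1=C_1(\sigma,p,m,\Omega)$. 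Next I would apply the lower bound in \eqref{lower_stability_estimate} of Proposition~\ref{full_lowercomparison_local} to obtain
\begin{equation*}
\|\bfa\|_{\ell_p(\Xi)}\le c^{-1}q^{-d/p}\|s\|_{L_p(\Omega)},
\end{equation*}
where $c=c(\rho,m,\Omega)$. Using the identity $q^{-d/p}=\rho^{d/p}h^{-d/p}$ and chaining the two inequalities yields
\begin{equation*}
\|s\|_{W_p^\sigma(\Omega)}\le C_1 c^{-1}\rho^{4m+3d+d/p}\,h^{-\sigma}\,\|s\|_{L_p(\Omega)}.
\end{equation*}
Since $\rho\le\rho_0$, the constant $C:=C_1c^{-1}\rho_0^{4m+3d+d/p}$ depends only on $m$, $\rho_0$, $\sigma$, $p$ and $\Omega$, which gives the claimed inverse estimate.

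There is no real obstacle here: everything has been set up in the preceding sections. The only points that require mild care are (i) ensuring the hypothesis $h\le h_0$ is compatible with both cited results (take the smaller of the two thresholds), (ii) recording that the lower stability constant $c$ in Proposition~\ref{full_lowercomparison_local} depends on $\rho$ but is bounded below once $\rho\le\rho_0$ is imposed, and (iii) noting that the integer restriction on $\sigma$ in the $p=\infty$ case is inherited from Theorem~\ref{main_local_bernstein}. The extension $\widetilde\Xi\subset\widetilde\Omega$ is used only implicitly, through its role in the construction of the Lagrange and local Lagrange functions in Sections~\ref{Lagrange_and_Bernstein} and~\ref{SS:LLF}.
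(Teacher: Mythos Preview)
Your proposal is correct and matches the paper's own proof, which consists of the single sentence ``This is an immediate combination of Theorem~\ref{main_local_bernstein} and Proposition~\ref{full_lowercomparison_local}.'' You have in fact supplied more detail than the paper does, spelling out the cancellation of the $h^{d/p}$ and $q^{-d/p}$ factors and the role of $\rho_0$ in bounding the constants.
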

\begin{proof}
This is an immediate combination of Theorem~\ref{main_local_bernstein} and Proposition~\ref{full_lowercomparison_local}.
\end{proof}

\subsection{Restriction to the boundary}
Immediate applications of Theorem \ref{main} are the following ``trace'' estimates.
(Since the elements of $\V_{\Xi}$ are continuous, it is appropriate to consider these results about
 restriction to the boundary.)
To make sense of these, we first need to describe Sobolev spaces on the boundary $\partial \Omega$. 

%
%
%
\subsubsection{Smoothness spaces on $\partial \Omega$}\label{SS:Sobolev_boundary}
We use  the common tactic of employing
 a partition of unity with corresponding changes of variable to flatten the boundary. 
 (As in \cite[1.11]{Trieb2} and \cite{MaMi}, for instance.) 
 The details of the partition of unity and change of variable depends on the smoothness of 
 the boundary, and this influences the types of Sobolev spaces we can define (namely, the
 maximum order of smoothness is governed by the smoothness of the boundary).

For a domain whose boundary is Lipschitz
we consider
 a partition of unity $(\psi_j)_{j=1}^N$ of $\partial \Omega$, where
 each $\psi_j:\partial \Omega \to [0,1]$ is Lipschitz, and let 
$(U_j,h_j)_{j=1}^N$ be a corresponding collection
of bilipschitz {\em charts}  so that each $U_j$ is an open set in $\partial \Omega$ 
containing the closure of 
$\supp {\psi_j}$ and each $h_j:U_j \to \mathcal{O}_j\subset\reals^{d-1}$ is a 
bijective Lipschitz function. 
%
%
%
Then for $1\leq p<\infty$ 
and $0<\sigma\le 1$,
 the Sobolev space 
$W_p^{\sigma} \bigl(\partial \Omega \bigr)$ 
consists of functions $f\in L_p(\partial \Omega)$ such that
\begin{equation} \label{boundary_sobolev}
    \|f\|_{W_p^{\sigma}\bigl(\partial \Omega \bigr)}^p            
:=\sum_{j=1}^N \|\bigl(\psi_j\circ (h_j^{-1})\bigr)\bigl(f\circ (h_j^{-1})\bigr)\|_{W_p^{\sigma}(\mathcal{O}_j)}^p
\end{equation}
is finite.

For higher orders of smoothness, we simply increase the smoothness of the boundary, and the partition of unity and chart.
For $\sigma<M$,
let $(\psi_j)_{j=1}^N$ be a $C^M$ partition of unity of $\partial \Omega$, and let 
$(U_j,h_j)_{j=1}^N$ be a  collection $C^M$ charts. Then $W_p^{\sigma}(\partial \Omega)$ consists of 
functions for which the  norm (\ref{boundary_sobolev}) is finite.

We note that this transporting of norms from Euclidean space to manifold by way of partition of unity and pull-back can be carried out for other smoothness spaces. In particular, it holds as well for the Besov class (see  again \cite{Trieb2} and \cite{MaMi}).
For this reason, it follows that for fractional $\sigma$, 
$W_p^{\sigma} \bigl(\partial \Omega \bigr) = B_{p,p}^{\sigma}(\partial \Omega)$ with equivalence of norms (as in the Euclidean case).

\subsection{Trace estimates}\label{SS_trace}
We may use Theorem \ref{main} to obtain the following trace estimate for functions in 
$\V_{\Xi}$. This is non-standard because the norms of the trace are bounded by $L_P$ norms rather than Sobolev norms.

\begin{corollary}\label{Trace} 
Under hypotheses of Theorem \ref{main}, for $s \in \V_{\Xi}$ we have, 
for $1\le p\le \infty$ and  $0<\sigma \le  m-1/p-(d/2-d/p)_+$, 
$$\| s\|_{W_p^{\sigma}(\partial \Omega)} 
\le 
C h^{-\sigma-1/p} \| s\|_{L_p (\Omega)}$$
with $C=C(m,\rho,\Omega)$.
\end{corollary}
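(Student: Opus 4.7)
The plan is to combine a classical trace theorem with the interior inverse estimate already established in Theorem \ref{main}. For $1 \le p < \infty$, the continuity of the trace operator $\gamma: W_p^\tau(\Omega) \to W_p^{\tau - 1/p}(\partial \Omega)$ --- valid for $\tau > 1/p$ up to whatever upper bound the boundary smoothness permits, recalling the $C^M$ charts from Section \ref{SS:Sobolev_boundary} --- yields, with the choice $\tau = \sigma + 1/p$,
$$\|s\|_{W_p^\sigma(\partial \Omega)} \le C_{\mathrm{tr}}\, \|s\|_{W_p^{\sigma + 1/p}(\Omega)}.$$
The hypothesis $0 < \sigma \le m - 1/p - (d/2 - d/p)_+$ is exactly arranged so that $\sigma + 1/p$ lies in the admissible range $[0,\, m - (d/2 - d/p)_+]$ of Theorem \ref{main}. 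Applying that theorem with $\sigma$ replaced by $\sigma + 1/p$ gives
$$\|s\|_{W_p^{\sigma + 1/p}(\Omega)} \le C\, h^{-\sigma - 1/p} \|s\|_{L_p(\Omega)},$$
and composing the two estimates proves the claim.

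For the endpoint $p = \infty$ (where the hypothesis of Theorem \ref{main} forces $\sigma$ to be an integer with $\sigma < m - d/2$) the ``trace'' step is trivial: elements of $\V_{\Xi}$ belong to $C^\sigma(\overline{\Omega})$, and restriction to $\partial \Omega$ is bounded by the $C^\sigma(\overline{\Omega})$ norm, so Theorem \ref{main} applies directly with no $h^{-1/p}$ loss (since $1/p = 0$).

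The main obstacle --- really the only delicate point --- is ensuring the trace theorem is available across the full fractional range implicit in the corollary. For $\sigma + 1/p \le 1$ the Lipschitz assumption on $\partial \Omega$ already suffices (this is the classical Gagliardo trace result). For larger $\sigma$, one invokes the higher-order trace theorems, which require the boundary regularity $M$ used to define $W_p^\sigma(\partial \Omega)$ in Section \ref{SS:Sobolev_boundary} to satisfy $M \ge \sigma + 1$. Under the standing smoothness assumptions on $\partial \Omega$ this is not an obstruction, and once the trace theorem is in place the corollary is an immediate two-step composition. It is worth emphasizing that the factor $h^{-\sigma - 1/p}$, and in particular the extra $h^{-1/p}$ beyond the bulk exponent, reflects precisely the \emph{trace loss} of one-$1/p$ derivatives; this is the price one pays for passing from a volumetric $L_p$ control to a boundary Sobolev control, and it cannot be improved by the present method.
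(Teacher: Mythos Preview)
Your proof is correct and follows essentially the same two-step route as the paper: apply the trace theorem $W_p^{\sigma+1/p}(\Omega)\to W_p^{\sigma}(\partial\Omega)$ (the paper invokes the Besov identification $W_p^{\sigma+1/p}=B_{p,p}^{\sigma+1/p}$ and cites Triebel and Mitrea--Mitrea), then use Theorem~\ref{main} with smoothness $\sigma+1/p$. Your treatment is in fact slightly more careful than the paper's, since you handle the $p=\infty$ endpoint explicitly and flag the boundary-regularity requirement for higher $\sigma$.
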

\begin{proof}
For $\sigma>0$ we have that  $W_p^{\sigma+1/p}( \Omega) = B_{p,p}^{\sigma+1/p}(\Omega)$ and  
 $W_p^\sigma(\partial \Omega) = B_{p,p}^{\sigma}(\partial \Omega)$. It follows that
  $\mathrm{Tr}:W_p^{\sigma+1/p}( \Omega) \to W_p^\sigma(\partial \Omega)$
   is bounded
by the trace theorem 
(one will find a suitable one for smooth boundaries in
\cite[3.3.3]{Trieb1},
and for Lipschitz boundaries in \cite[Theorem 2.1]{MaMi})
 so 
$$\| s\|_{W_p^{\sigma}(\partial \Omega)} 
\le 
C_{\Omega}\| s\|_{W_p^{\sigma+{1/p}}( \Omega)} 
\le 
C_{\Omega,\rho} h^{-\sigma-1/p} \| s\|_{L_p (\Omega)}$$
The first inequality is from the trace theorem, while the second follows from Lemma \ref{lowercomparison}. 
\end{proof}
We can  get a similar estimate for $\sigma=0$, although this requires a modified trace result.
\begin{lemma}\label{ext_tr}
Suppose $\Omega$ is compact with $C^1$ boundary.
For $1<p<\infty$   there is a constant $C_{p}$ so that for all $u\in C^1(\overline{\Omega})$ and $\epsilon>0$  we have
$$\|u\|_{L_p(\partial {\Omega})}\le   C_{p}( \epsilon^{-q/p} \|u\|_{L_p(\Omega)}^p + \epsilon \|u\|_{W_p^1(\Omega)}^p)$$
with 
$q=\frac{p}{p-1}$ .
\end{lemma}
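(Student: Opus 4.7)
The plan is to derive the inequality from the divergence theorem applied to a suitable vector field, followed by a scaled Young's inequality. Since $\partial\Omega$ is $C^1$ and compact, there exists a smooth vector field $\mathbf{V}\in C^1(\overline{\Omega},\RR^d)$ whose restriction to $\partial\Omega$ satisfies $\mathbf{V}\cdot\nu \ge 1$, where $\nu$ is the outward unit normal; such a $\mathbf{V}$ may be built by extending $\nu$ inward via a partition of unity subordinate to boundary charts. I would fix this $\mathbf{V}$ once and for all and note that $\|\mathbf{V}\|_{L_\infty(\Omega)}$ and $\|\mathrm{div}\,\mathbf{V}\|_{L_\infty(\Omega)}$ are controlled by constants depending only on $\Omega$.

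Next, for $u\in C^1(\overline{\Omega})$ and $p>1$, the function $|u|^p$ lies in $C^1(\overline{\Omega})$ with $\nabla |u|^p = p|u|^{p-1}\mathrm{sgn}(u)\,\nabla u$. Applying the divergence theorem to the vector field $|u|^p \mathbf{V}$ gives
\begin{equation*}
\int_{\partial\Omega} |u|^p (\mathbf{V}\cdot\nu)\,dS
= \int_\Omega \bigl(p|u|^{p-1}\mathrm{sgn}(u)\,\mathbf{V}\cdot\nabla u + |u|^p\,\mathrm{div}\,\mathbf{V}\bigr)\,dx.
\end{equation*}
Since $\mathbf{V}\cdot\nu\ge 1$ on $\partial\Omega$, the left-hand side dominates $\|u\|_{L_p(\partial\Omega)}^p$ (so the inequality to be proved should read $\|u\|_{L_p(\partial\Omega)}^p$ on the left; the displayed statement appears to be missing this exponent).

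For the volume term, I would apply H\"older in the form of the scaled Young's inequality $ab\le \epsilon a^p/p + \epsilon^{-q/p} b^q/q$ with $q=p/(p-1)$, setting $a=|\nabla u|$ and $b=|u|^{p-1}$. Since $(p-1)q=p$, this yields
\begin{equation*}
p|u|^{p-1}|\mathbf{V}\cdot\nabla u|\ \le\ \|\mathbf{V}\|_{L_\infty}\bigl(\epsilon|\nabla u|^p + C_p\,\epsilon^{-q/p}|u|^p\bigr),
\end{equation*}
while the second term is absorbed via $|u|^p|\mathrm{div}\,\mathbf{V}|\le \|\mathrm{div}\,\mathbf{V}\|_{L_\infty}|u|^p\le C_\Omega \epsilon^{-q/p}|u|^p$ (valid for all $\epsilon$ small, since $\epsilon^{-q/p}\to\infty$). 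Integrating and collecting constants delivers the claimed inequality with a constant depending only on $p$, $\Omega$, and $\mathbf{V}$.

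No genuine obstacle arises: the only ingredient specific to the geometry is the construction of the transverse field $\mathbf{V}$, which is where the $C^1$-boundary hypothesis is used; the remainder is the standard Young-inequality trade-off between $L_p$ and $W_p^1$ norms, with the exponent $-q/p$ being precisely what one gets from the conjugate pairing $(p,q)$ in Young's inequality.
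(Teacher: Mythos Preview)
Your argument is correct and complete; the divergence-theorem approach with a transverse field is a standard and clean route to this kind of scaled trace inequality. One small point: your remark that the bound on the $\mathrm{div}\,\mathbf{V}$ term is ``valid for all $\epsilon$ small'' undersells the result. For all $\epsilon>0$ one has either $\epsilon^{-q/p}\ge 1$ (when $\epsilon\le 1$) or $\epsilon\ge 1$ (when $\epsilon>1$, in which case use $\|u\|_{L_p}^p\le \|u\|_{W_p^1}^p$), so $C_\Omega\|u\|_{L_p}^p$ is absorbed by the right-hand side for every $\epsilon>0$, as the lemma claims.

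The paper takes a different route: it first proves the half-space version on $\reals_+^d$ by the fundamental theorem of calculus, writing $|u(x',0)|^p\le p\int_0^\infty |u|^{p-1}|\partial_d u|\,dx_d$ and applying Young's inequality in the form $ab\le C(\epsilon)a^q+\epsilon b^p$ with $C(\epsilon)=q^{-1}(\epsilon p)^{-q/p}$, then integrates over $\reals^{d-1}$. The general domain is handled by a $C^1$ partition of unity subordinate to boundary charts, pulling back to the half-space. Your approach trades the localization machinery for the one-time construction of the transverse field $\mathbf{V}$ and a single application of the divergence theorem on $\Omega$; this is more global and avoids chart bookkeeping, at the cost of introducing the extra $\mathrm{div}\,\mathbf{V}$ term (which, as noted, is harmless). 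Both methods rely on the $C^1$ boundary in the same essential way---the paper to obtain $C^1$ charts, you to build the transverse field---and both produce the exponent $-q/p$ from the conjugate pairing in Young's inequality. You are also right that the displayed statement is missing a $p$th power on the left; the paper's own proof confirms the intended inequality is for $\|u\|_{L_p(\partial\Omega)}^p$.
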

\begin{proof}
Note that in this case, we consider Sobolev norms over $\Omega$, so for the norms on the right hand side, we make use of the definition given in Section \ref{SS:Sobolev}. The $L_p(\partial \Omega)$ norm on the left is with respect to surface measure, but this can be estimated in a standard way  (by partition of unity and change of variables).

We begin by proving a trace result for $\Omega=\reals_+^d = \reals^{d-1}\times[0,\infty)$.
For $u\in C^1(\reals_+^d)$ having compact support
 and $x'\in \reals^{d-1}$, let $r_{x'}$ be
the first positive zero of $t\mapsto |u(x',t)|$. Then
\begin{eqnarray*}
 \bigl|u(x',0)\bigr|^p
&\le& 
\int_0^{r_{x'}} \partial_{d} |u(x',x_d)|^p d x_d\\
&\le&
p\int_0^{\infty}|u(x',x_d)|^{p-1}\, | \partial_{d} (u(x',x_d)) | d x_d\\
&\le& 
\int_0^{\infty}C(\epsilon) |u(x',x_d)|^{(p-1)\frac{p}{p-1}}+ \epsilon| \partial_{d} (u(x',x_d))|^p d x_d
\end{eqnarray*}
The last line uses Young's inequality $ab\le C(\epsilon) a^q + \epsilon b^p$ with 
$C(\epsilon)= q^{-1}(\epsilon p)^{-q/p}$.
Integrating this over $\reals^{d-1}$, we have
\begin{equation}\label{modified_trace}
\|u\|_{L_p(\reals^{d-1})}^p \le C_{p} \epsilon^{-q/p} \|u\|_{L_p(\reals_+^d)}^p + \epsilon \| u\|_{W_p^1(\reals_+^d)}^p.
\end{equation}

Now let $(\Psi_j)_{j=1}^N$ be a finite collection of non-negative, compactly supported, $C^1$ functions 
so that $\sum \Psi_j =1$ in a small neighborhood of $\partial \Omega$.
Let $(U_j)_{j=1}^N$ denote a corresponding collection of open sets 
so that $\supp \Psi_j \subset U_j$ and so that there is $h_j:U_j \to B(0,\epsilon_j)$, an open ball in $\reals^d$.

For $f\in C^1(\overline{\Omega})$ and $1\le j\le N$, the product $\Psi_j f$ is compactly supported and (extending by 0) we have
$u_j := (\Psi_j f)\circ (h_j^{-1}) \in C_1(\reals^d_+)$.
Applying (\ref{modified_trace}) to $(\Psi_j f)\circ h_j^{-1}$ gives 
$
\|u_j\|_{L_p(\reals^{d-1})}^p \le C_p \epsilon^{-q/p} \|u_j\|_{L_p(\reals^d)}^p + \epsilon \| u_j\|_{W_p^1(\reals^d)}^p.
$
Because $\Psi_j$ and $h_j^{-1}$ are $C^1$ over compact sets, their  norms
can be bounded independent of $j$. By applying chain and product rules, it follows that 
\begin{equation}\label{prelim_mod_trace}
\sum_{j=1}^N \|u_j\|_{L_p(\reals^{d-1})}^p \le C_{p,q} \bigl(\epsilon^{-q/p} \|u\|_{L_p(\Omega)}^p + \epsilon \| u\|_{W_p^1(\Omega)}^p\bigr).
\end{equation}
with an increased 
constant which depends on that of (\ref{modified_trace})  
as well as  
$\max_{1\le j\le N} \|(h_j)^{-1}\|_{C_1\bigl(h_j^{-1}(\supp{\Psi_j})^- \bigr)}$
and 
$\max_{1\le j\le N} \|\Psi_j\|_{C_1(\reals^d)}$. 
Because $\left({\Psi_j}_{\left|_{\partial \Omega}\right.}\right)$ is a partition of unity for $\partial \Omega$, 
the left hand side of (\ref{prelim_mod_trace}) 
controls the $L_p$ norm of $u_{\left|_{\partial \Omega}\right.}$,
which gives the $\epsilon$-modified trace inequality
$$\|u\|_{L_p(\partial \Omega)}^p \le C_{p,q}(  \epsilon^{-q/p} \|u\|_{L_p(\Omega)}^p + \epsilon \|u\|_{W_p^1(\Omega)}^p).$$
\end{proof}

\begin{corollary}\label{Modified_Trace} 
Let $\Omega$ be a bounded domain with  $C^1$ boundary
and assume the hypotheses of Theorem \ref{main}.
For $s \in \V_{\Xi}$ we have, 
for $1\le p\le \infty$ and  $1 + (d/2-d/p)_+\le m$
that
$$\| s\|_{L_p(\partial \Omega)} 
\le 
Ch^{-1/p} \| s\|_{L_p (\Omega)}$$
with $C=C(p,\rho,m,\Omega)$.
\end{corollary}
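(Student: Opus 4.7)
The plan is to combine the inverse inequality from Theorem \ref{main} with the interpolation-style trace inequality of Lemma \ref{ext_tr}, optimizing the free parameter $\epsilon$ to balance two competing terms. The condition $1+(d/2-d/p)_+\le m$ in the hypothesis is precisely what we need to invoke Theorem \ref{main} with the Sobolev exponent $\sigma=1$; this is the only place where the smoothness assumption enters.

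For $1<p<\infty$, first I would apply Theorem \ref{main} with $\sigma=1$ to obtain
$$\|s\|_{W_p^1(\Omega)}\le C\,h^{-1}\|s\|_{L_p(\Omega)}\quad\text{for all } s\in\V_{\Xi}.$$
Next, since $\V_{\Xi}\subset C^1(\overline{\Omega})$, Lemma \ref{ext_tr} gives, for every $\epsilon>0$,
$$\|s\|_{L_p(\partial\Omega)}^p\le C_p\bigl(\epsilon^{-q/p}\|s\|_{L_p(\Omega)}^p+\epsilon\,\|s\|_{W_p^1(\Omega)}^p\bigr)\le C\bigl(\epsilon^{-q/p}+\epsilon\,h^{-p}\bigr)\|s\|_{L_p(\Omega)}^p,$$
with $q=p/(p-1)$. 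I would then optimize in $\epsilon$ by balancing the two terms: setting $\epsilon^{-q/p}=\epsilon\,h^{-p}$ yields $\epsilon^{1+q/p}=h^p$, and since $1+q/p=q$, the optimal choice is $\epsilon=h^{p/q}=h^{p-1}$. Substituting back, both terms equal $h^{-1}\|s\|_{L_p(\Omega)}^p$, giving $\|s\|_{L_p(\partial\Omega)}\le C\,h^{-1/p}\|s\|_{L_p(\Omega)}$ as desired.

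The endpoint cases require brief separate treatment. For $p=\infty$, continuity of $s$ together with the inclusion $\partial\Omega\subset\overline{\Omega}$ gives $\|s\|_{L_\infty(\partial\Omega)}\le \|s\|_{L_\infty(\Omega)}$, which matches the claimed bound since $h^{-1/\infty}=1$. For $p=1$, Lemma \ref{ext_tr} does not apply directly since $q=\infty$; however, tracing through its proof for $p=1$ (no Young's inequality is needed) yields the cleaner estimate $\|u\|_{L_1(\partial\Omega)}\le C\|u\|_{W_1^1(\Omega)}$, which combined with Theorem \ref{main} at $\sigma=1$ gives the $h^{-1}$ bound.

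I expect the main obstacle to be largely notational: verifying that the admissibility condition $1+(d/2-d/p)_+\le m$ indeed matches the hypothesis of Theorem \ref{main} for $\sigma=1$ across all the ranges of $p$, and checking that the endpoint arguments cleanly interpolate with the main $1<p<\infty$ computation. There is no deep analytic difficulty; once the inverse estimate for $W_p^1$ is in hand, the trace estimate follows from a one-parameter optimization, and the exponent $-1/p$ emerges naturally from the scaling $\epsilon=h^{p-1}$.
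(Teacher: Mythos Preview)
Your proposal is correct and follows essentially the same approach as the paper: for $1<p<\infty$ you combine Lemma~\ref{ext_tr} with Theorem~\ref{main} at $\sigma=1$ and choose $\epsilon=h^{p-1}$, and for $p=1$ you invoke the classical trace $W_1^1(\Omega)\to L_1(\partial\Omega)$ directly. Your treatment is in fact slightly more complete, since you explicitly dispatch the trivial $p=\infty$ case, which the paper's proof does not mention, and you explain the balancing that motivates the choice of $\epsilon$ rather than simply stating it.
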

\begin{proof}
For $p=1$ the Theorem follows directly from the 
boundedness of trace from $W_1^1(\Omega)$ to $L_1(\partial \Omega)$  (see \cite[Theorem 1 Chapter 5.5]{evans})
and by repeating the argument of Theorem \ref{lowercomparison}.

For $1<p<\infty$, we apply Lemma \ref{ext_tr} with  $\epsilon = h^{p-1}$ (so that $\epsilon^{-q/p} = h^{-\frac{p-1}{p-1}}$) 
followed by Theorem \ref{lowercomparison}. Thus,
\begin{eqnarray*}
\|s\|_{L_p(\partial \Omega)}^p 
&\le& C(h^{-1} \|s\|_{L_p(\Omega)}^p + h^{p-1} \|s\|_{W_p^1(\Omega)}^p)\\
&\le& C(h^{-1} \|s\|_{L_p(\Omega)}^p + h^{-1} \|s\|_{L_p(\Omega)}^p)
\end{eqnarray*}
and the result follows by taking the $p$th root.
\end{proof}

\appendix
\section{Energy and pointwise bounds on the Lagrange function}\label{Appendix_A}

In this section, we show that Lagrange functions for  surface splines and 
Mat{\' e}rn kernels satisfy decay estimates as in Section \ref{SS:Lagrange}. 
 
We say that $\Omega$ satisfies an interior cone condition if
there are constants $\varphi\in (0,\pi/2)$ and $0<R<\infty$ so that 
  for every $x\in \Omega$ there is a  cone 
  $\mathcal{C}_{\vec{n}} = \{y \mid |y - x| \le R, n\cdot(\frac{y-x}{|y-x|})\ge \cos \varphi\}$ opening in the direction 
  determined by the unit vector $\vec{n}$
  so that $\mathcal{C}_{\vec{n}}\subset \Omega$.

 We recall the zeros estimate 
 \cite[Theorem A.11]{HNW-p}
 for a bounded region $\Omega$ with Lipschitz boundary
 (the version we cite is a streamlined modification of an earlier estimate given in \cite[Theorem 2.12]{NWW}).
\begin{lemma}[{\bf Zeros estimate}] \label{zl} 
Let $1\le p\le \infty$ and  $m>d/p$ (when $p=1$ we may have $m\ge d/p$).
Suppose $\Omega$  satisfies a cone condition with aperture $\varphi$ and radius $R$. 
Then there are constants $h_1$ 
(depending on $m$ and $\varphi$) and $\Lam$ (depending on $m,d,p, \varphi$) so that 
if $X\subset \Omega$  has fill distance $h\le h_1 R$ 
and if $u\in W_p^{m}(\Omega)$ satisfies $u\left|_{X}\right. = 0$ then 
 \[
 \|u\|_{W_p^{k}(\Omega)}\le  \Lam h^{m- k} \|u \|_{W_p^{m}(\Omega)}
\]
and
 \[
 \|u\|_{L_{\infty}(\Omega)}\le  \Lam h^{m- d/p} \|u \|_{W_p^{m}(\Omega)}.
\]
\end{lemma}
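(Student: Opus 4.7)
The plan is to prove the zeros estimate by a local Bramble--Hilbert argument combined with the norming property of sufficiently dense subsets for $\Pi_{m-1}$, all glued together through a finite-overlap covering of $\Omega$ by cones of diameter $\sim h$.

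At the heart of the argument is a local estimate on a single cone (or star-shaped region) $\mathcal{C} \subset \Omega$ of diameter $d \sim h$. Given $u \in W_p^m(\Omega)$, I would introduce the averaged Taylor polynomial $Q u \in \Pi_{m-1}$ over a ball interior to $\mathcal{C}$, as described in Brenner--Scott \cite{Bren}. The Bramble--Hilbert lemma (in the scale-invariant form obtained by dilating a reference cone of unit size, so that constants depend only on $m, d, p$ and the aperture $\varphi$) gives
\[
\|u - Q u\|_{W_p^k(\mathcal{C})} \le C\, d^{m-k}\, |u|_{W_p^m(\mathcal{C})}, \qquad 0 \le k \le m,
\]
and via Sobolev embedding ($m > d/p$) also $\|u - Qu\|_{L_\infty(\mathcal{C})} \le C d^{m-d/p} |u|_{W_p^m(\mathcal{C})}$. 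The key reduction is that, since $u$ vanishes on $X$, the polynomial $Qu$ equals $Qu - u$ on $X \cap \mathcal{C}$; hence if $X \cap \mathcal{C}$ is $\Pi_{m-1}$-norming on $\mathcal{C}$ with norming constant $\gamma$, then
\[
\|Q u\|_{L_\infty(\mathcal{C})} \le \gamma \|Q u|_{X \cap \mathcal{C}}\|_{\ell_\infty} = \gamma \|(Qu - u)|_{X \cap \mathcal{C}}\|_{\ell_\infty} \le \gamma \|u - Qu\|_{L_\infty(\mathcal{C})}.
\]
Combining the two bounds yields $\|u\|_{L_\infty(\mathcal{C})} \le C d^{m-d/p} |u|_{W_p^m(\mathcal{C})}$ and, after applying Markov-type inverse inequalities for $Qu$ (whose derivatives of order $k$ are controlled in $L_p(\mathcal{C})$ by $d^{-k}$ times $\|Qu\|_{L_p(\mathcal{C})}$), also $\|u\|_{W_p^k(\mathcal{C})} \le C d^{m-k} |u|_{W_p^m(\mathcal{C})}$.

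To produce the norming sets, I would invoke the polynomial norming results from \cite[Corollary 3.11]{Wend}: a subset of a region star-shaped with respect to a ball, whose fill distance is at most $c_m$ times the diameter, is $\Pi_{m-1}$-norming with constant $2$. Using the interior cone condition, for each $x \in \Omega$ there is a cone $\mathcal{C}_x \subset \Omega$ of radius $R$ and aperture $\varphi$; choosing a rescaled subcone of diameter $d \sim h/h_1$ ensures that $X \cap \mathcal{C}_x$ has fill distance $\lesssim h$, and setting $h_1 = c_m$ guarantees the subcone contains a norming subset with universal constant. Finally, I would extract a finite-overlap subcover of $\Omega$ by such subcones (possible by a Vitali/Besicovitch-type selection, with overlap $M$ depending only on $d$ and $\varphi$), and sum the local estimates to get
\[
\|u\|_{W_p^k(\Omega)}^p \le \sum_i \|u\|_{W_p^k(\mathcal{C}'_{x_i})}^p \le C h^{p(m-k)} \sum_i |u|_{W_p^m(\mathcal{C}'_{x_i})}^p \le C M h^{p(m-k)} |u|_{W_p^m(\Omega)}^p,
\]
with the $L_\infty$ estimate following by taking the supremum instead of summing.

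The main obstacle is the construction and control of the cone cover, especially near $\partial \Omega$: the subcones must simultaneously be large enough relative to $h$ to host a norming subset with a $\varphi$-dependent (not $h$-dependent) constant, yet small enough that their diameters are $\sim h$ to produce the factor $h^{m-k}$, and the overlap number must be independent of $h$. The tension is resolved by fixing the shape of the subcone once (on a reference scale) and then dilating by $h$, so that all constants that appear in the Bramble--Hilbert lemma, the norming inequality, the Markov inequality, and the covering lemma depend only on $m$, $d$, $p$, and the pair $(\varphi, R)$ coming from the cone condition. This scale-invariance is precisely what allows $h_1$ and $\Lambda$ in the statement to be chosen independently of the particular $X$.
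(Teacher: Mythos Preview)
The paper does not actually prove this lemma: it is stated as a recalled result, with the proof deferred to \cite[Theorem A.11]{HNW-p} (itself a streamlining of \cite[Theorem 2.12]{NWW}). So there is no ``paper's own proof'' to compare against here.

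That said, your sketch is essentially the argument that appears in those references. The machinery of covering $\Omega$ by cones of diameter comparable to $h$, using the averaged Taylor polynomial and Bramble--Hilbert on each piece, invoking the polynomial norming property of sufficiently dense subsets (\cite[Theorem 3.8, Corollary 3.11]{Wend}) to kill the polynomial part via the vanishing of $u$ on $X$, and then summing with bounded overlap, is exactly the strategy of Narcowich--Ward--Wendland. Your identification of the main technical point---that the cone shape is fixed on a reference scale and then dilated, so that all constants (Bramble--Hilbert, norming, Markov, overlap) depend only on $m,d,p,\varphi$ and not on $h$---is the correct resolution, and it is precisely what allows the constant $\Lambda$ to be uniform in $R$ (a feature the paper explicitly exploits when applying the lemma to annuli of varying thickness). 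One small refinement worth noting: in the cited proofs the covering is by star-shaped sets (cones with apex at each $x\in\Omega$), and the bounded-overlap selection is done carefully so that the constants truly depend only on the aperture; your Vitali/Besicovitch remark is the right idea but would need to be made precise for cones rather than balls.
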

An important feature of this lemma is that the density $h$ is controlled by the cone radius $R$, but the constant $\Lam$ does not depend on $R$. 
This allows a comparison of results across  sets which are geometrically related.
E.g.,  annuli $B(x,r_2)\setminus B(x,r_1)$
satisfy cone conditions with aperture $\varphi$ independent of $r_2$ and $r_1$, and with cone radius
equal to half the thickness $\frac{r_2-r_1}{2}$.
 Thus, the above result holds for any point set with $h \le h_1(r_2-r_1)/2$. 
With almost no modification, this result extends to balls $B(x,r)$ (where $h \le h_1 r$) and complements of balls (where
there is not restriction on $h$).

Consider now the annulus $\a(\xi,r,t):=\{x\in \reals^d\mid r-t<|x-\xi|\le r\}$.  
Applying Lemma \ref{zl} with $p=2$ and $k=m-1$,
 we  estimate the Sobolev norm\footnote{Recall that we use the Sobolev norm as defined in Section \ref{SS:Sobolev} -- in 
 particular, the $kth$ order  partial derivatives are weighted by $\begin{pmatrix}m\\k \end{pmatrix}$.}
  as
$\|u\|_{W_2^{m}(\a)}^2
\le 
|u|_{W_2^{m}(\a)}^2 +
m \Lam^2 h^{2} \|u \|_{W_2^{m-1}(\a)}^2 $
 which, after rearranging terms, implies  that
$\|u\|_{W_2^{m}(\a)}^2\le 
\frac{1}{1- m \Lam^2 h^{2}} |u|_{W_2^{m}(\a)}^2 $
for $u$ vanishing on $X\subset \a$ with $h\le h_1t/2$.
In short, if 
$h\le \min\bigl(\frac{h_1 t}2,h_2\bigr)$ with 
\begin{equation}\label{h_2}
h_2 := (\sqrt{2m}\Lam)^{-1}\end{equation}
then
\begin{equation}\label{zeros_annulus_seminorm}
 |u|_{W_2^{k}(\a)}
 \le 
 \|u\|_{W_2^{k}(\a)}
 \le
 \Lam h^{m-k} \|u \|_{W_2^m(\a)}
 \le 
 2\Lam h^{m-k} |u |_{W_2^m(\a)}
\end{equation}
for $u$ vanishing on $X$.

\begin{lemma}\label{one_step_energy}
Suppose $m>d/2$. 
There is a constant $\nu= \nu(m,d)$ with $\nu<1$ 
such that if $X \subset \reals^d$ is a finite point set, 
$\a = \a(\xi,r,t)$ is the annulus of outer radius $r$, width $t$ and center $\xi\in X$,
and  $X_0= X\cap \a$ has fill distance
$h = h(X_0,\a)
\le
 \min\bigl( \frac{h_1 t}{2}, h_2\bigr) 
 $, 
then
\begin{itemize}
\item  the Mat{\' e}rn Lagrange function 
$\chi_{\xi} \in \spam{\{\kappa_m(\cdot  - \zeta)\mid \zeta \in X\}}$
satisfies
$$\|\chi_{\xi} \|_{W_2^m\bigl(\reals^d\setminus B(\xi, r)\bigr)} \le  \nu \| \chi_{\xi} \|_{W_2^m\bigl(\reals^d\setminus B(\xi, r-t)\bigr)}.$$
 \item the   Lagrange function  
$\chi_{\xi} \in S(\phi_m,X)$ 
for the order $m$ surface spline
satisfies
$$|\chi_{\xi} |_{W_2^m\bigl(\reals^d\setminus B(\xi, r)\bigr)} \le  \nu | \chi_{\xi} |_{W_2^m\bigl(\reals^d\setminus B(\xi, r-t)\bigr)}.$$
\end{itemize}
\end{lemma}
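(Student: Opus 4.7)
The plan is to construct a single competitor by cutoff and invoke the minimum (semi-)norm property of the Lagrange function. First I would fix $\psi \in C_c^\infty(B(\xi,r))$ with $\psi\equiv 1$ on $B(\xi,r-t)$ and $\|D^\alpha\psi\|_\infty\le C_\alpha t^{-|\alpha|}$ for $|\alpha|\le m$, and set $v := \psi\chi_\xi$. For any $\zeta\in X$ I would distinguish two cases: either $\zeta\in B(\xi,r-t)$, and then $v(\zeta)=\chi_\xi(\zeta)$ trivially; or $\zeta\notin B(\xi,r-t)$, in which case $\zeta\ne\xi$ (since $\xi$ is interior to $B(\xi,r-t)$), so $\chi_\xi(\zeta)=0$ and $v(\zeta)=0$ as well. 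Hence $v$ interpolates the same data as $\chi_\xi$, and since $\psi$ is smooth with compact support, $v$ lies in $W_2^m(\reals^d)$ in the Mat\'ern case and in $D^{-m}L_2$ in the surface-spline case.

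Next I would invoke the best-(semi-)norm property recalled in Sections~\ref{SS:M_kernels} and~\ref{SS:ss_kernels} to obtain $\|\chi_\xi\|^2\le\|v\|^2$ (or the seminorm version). The integrand splits additively across the disjoint partition $B(\xi,r-t)\cup \mathfrak{a}\cup F$ where $F := \reals^d\setminus B(\xi,r)$. On $B(\xi,r-t)$ the functions agree and on $F$ the competitor vanishes, so after cancellation the minimality bound collapses to
\[
\|\chi_\xi\|_{W_2^m(\mathfrak{a})}^2 + \|\chi_\xi\|_{W_2^m(F)}^2 \;\le\; \|\psi\chi_\xi\|_{W_2^m(\mathfrak{a})}^2
\]
(and analogously with seminorms in the conditionally positive definite case). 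Leibniz and the derivative bounds on $\psi$ dominate the right side by $C\sum_{k=0}^m t^{-2k}\|\chi_\xi\|_{W_2^{m-k}(\mathfrak{a})}^2$; since $\chi_\xi$ vanishes on $X\cap \mathfrak{a}$ and the density hypothesis $h\le\min(h_1t/2,h_2)$ is exactly what the zeros estimate~\eqref{zeros_annulus_seminorm} requires, each lower-order term may be traded for $h^k\|\chi_\xi\|_{W_2^m(\mathfrak{a})}$. The sum becomes a bounded geometric series in $(h/t)$, producing a constant $C'=C'(m,d)$ with $\|\psi\chi_\xi\|_{W_2^m(\mathfrak{a})}^2\le C'\|\chi_\xi\|_{W_2^m(\mathfrak{a})}^2$.

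Finally I would use the additive identity $\|\chi_\xi\|_{W_2^m(E)}^2 = \|\chi_\xi\|_{W_2^m(\mathfrak{a})}^2 + \|\chi_\xi\|_{W_2^m(F)}^2$ for $E := \reals^d\setminus B(\xi,r-t)$ to substitute $\|\chi_\xi\|_{W_2^m(\mathfrak{a})}^2 = \|\chi_\xi\|_{W_2^m(E)}^2 - \|\chi_\xi\|_{W_2^m(F)}^2$ into the master inequality; this rearranges to $\|\chi_\xi\|_{W_2^m(F)}^2 \le \tfrac{C'-1}{C'}\|\chi_\xi\|_{W_2^m(E)}^2$, so $\nu := \sqrt{(C'-1)/C'}<1$ depends only on $m,d$, as required. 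The surface-spline case follows verbatim upon replacing the norm by the Beppo--Levi seminorm and noting $v\in D^{-m}L_2$. I expect the main obstacle to be exactly the strict inequality $\nu<1$: the Leibniz step in isolation yields $C'\ge 1$ and thus only $\nu\le 1$; the strict decay emerges only once the cutoff bound is recoupled with the annulus--complement additive split, exploiting the fact that $\|\chi_\xi\|_{W_2^m(\mathfrak{a})}^2$ appears on both sides of the minimality inequality with effectively opposite signs.
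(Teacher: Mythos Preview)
Your proposal is correct and follows essentially the same argument as the paper: construct a cutoff competitor, invoke the minimum-(semi-)norm property, bound the product on the annulus via Leibniz and the zeros estimate~\eqref{zeros_annulus_seminorm}, and then use the additive split to extract the contraction factor $\nu=\sqrt{(C'-1)/C'}$. Your verification that $v$ interpolates the same data and your final algebraic rearrangement match the paper's reasoning line by line; the only cosmetic difference is that the paper converts the $t^{-2k}$ weights to $h^{-2k}$ (using $h\le h_1 t/2$) before applying the zeros estimate, whereas you phrase the outcome as a geometric series in $h/t$, which is equivalent.
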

\begin{proof}
In either case, the function $k_m$ is the reproducing kernel for a (semi-)Hilbert space 
(described in Sections \ref{SS:M_kernels} and \ref{SS:ss_kernels}), 
and we use the notation $[u]_{m,Y}$ to denote $\|u\|_{W_2^m(Y)}$ or $|u|_{W_2^m(Y)}$, respectively.

Let $\tau:\reals \to [0,1]$  
be a smooth cut-off function supported on the interval $(-\infty,1)$  equaling $1$ on $(-\infty, 0]$.
For $r>t$, we define $\tau_{\xi,r,t}:\reals^d\to \reals$ as $\tau_{\xi,r,t} (x) = \tau\bigl(\frac{1}{t}(|x-\xi| - (r-t)\bigr)$, 
and note that it is a smooth function
supported in the ball $B(\xi,r)$, and equals 1 in $B(\xi,r-t)$.  
By the chain rule, there is a bound $\|D^{ \beta} \tau_{\xi,r,t}\|_{\infty} \le C t^{-| \beta|} $ 
which is independent of $r$.

Both $\chi_{\xi}$ and $\tau_{\xi, r,t} \chi_{\xi}$ are Lagrange functions on 
$X$. 
Thus 
$[\chi_{\xi}]_m
\le 
[\tau_{\xi,r,t} \chi_{\xi}]_m
$.
Using the additivity  of $[\cdot]_m$, 
and noting that the Lagrange functions are identical on $B(\xi,r-t)$ 
while $\tau_{\xi,r,t}$ vanishes outside $B(\xi,r)$, we have
$$
[\chi_{\xi}]_m^2
\le 
[\tau_{\xi,r,t} \chi_{\xi}]_m^2
\qquad
\longrightarrow
\qquad
[\chi_{\xi}]_{m,\reals^d\setminus B(\xi,r-t)}^2
\le 
[\tau_{\xi,r,t} \chi_{\xi}]_{m, \a(\xi,r,t)}^2.
$$
By using H{\" o}lder's inequality in conjunction with the product rule, we have 
\begin{eqnarray}\label{Matveev_product_rule}
\int_{\a}|D^{\alpha} \bigl(\tau_{\xi,r,t}(x) \chi_{\xi}(x)\bigr) |^2 \dif x
&=&
\int_{\a}|
  \sum_{\beta\le \alpha} 
    \begin{pmatrix}\alpha \\ \beta\end{pmatrix}D^{\alpha - \beta}\tau_{\xi,r,t}(x) D^{ \beta} \chi_{\xi}(x) |^2
\dif x
\nonumber\\
&\le &
C
\sum_{\beta\le \alpha}
t^{-2|\alpha - \beta|}
\int_{\a}|D^{\beta} \chi_{\xi}(x)\bigr) |^2 \dif x   \nonumber \\
&\le &
C
\sum_{\beta\le \alpha}
h^{-2(|\alpha| -|\beta|)}
\int_{\a}|D^{\beta} \chi_{\xi}(x)\bigr) |^2 \dif x
\end{eqnarray}
In the last line we have used that $\beta \le \alpha$,  and thus
$t^{-|\alpha - \beta|} = t^{-|\alpha| + |\beta|} \le h_0^{|\alpha| - |\beta|}h^{-|\alpha| + |\beta|}$. 
Applying  
(\ref{zeros_annulus_seminorm})
to  (\ref{Matveev_product_rule})
gives, for each 
$\beta\le \alpha$, $\int_{\a}|D^{\beta} \chi_{\xi}(x)\bigr) |^2 \dif x\le C^2 h^{2(m-|\beta|)}\|\chi_{\xi}\|_{W_2^m(\a)}$.
This yields the inequality
$$
[\tau_{\xi, r,t} \chi_{\xi}]_{m,\reals^d\setminus B(\xi,r-t)}^2
\le 
C
\sum_{|\alpha| = m} 
\sum_{\beta\le \alpha}
\begin{pmatrix}\alpha \\ \beta\end{pmatrix}
\bigl(
h^{-2|\alpha| + 2|\beta|}\bigr)
C^2
h^{2(m-|\beta|)}|\chi_{\xi}|_{W_2^m(\a)}^2.
$$
Canceling powers of $h$ and collecting constants which depend only on $m$ and $d$, we have
$$
[ \chi_{\xi}]_{m,\reals^d\setminus B(\xi,r-t)}^2
\le
[\tau_{\xi, r,t} \chi_{\xi}]_{m,\reals^d\setminus B(\xi,r-t)}^2
\le 
C
[ \chi_{\xi}]_{m,\a}^2
$$
Finally, we note that 
$[ \chi_{\xi}]_{m,\a}^2= [ \chi_{\xi}]_{m,\reals^d \setminus B(\xi, r-t)}^2-[ \chi_{\xi}]_{m,\reals^d \setminus B(\xi, r)}^2$
which yields
$$[ \chi_{\xi}]_{m,\reals^d \setminus B(\xi, r)}^2 \le \frac{C-1}{C} [ \chi_{\xi}]_{m,\reals^d \setminus B(\xi, r-t)}^2  $$
and the lemma follows with $\nu = \sqrt{\frac{C-1}{C}}<1$.
\end{proof}

We may now iterate Lemma \ref{one_step_energy} to get control of the ``energy'' of the tail of the Lagrange functions.
\begin{lemma}\label{appendix_matveev}
Suppose $D\subset \reals^d$ is bounded, 
and $X\subset D$ is a finite point set with fill distance satisfying $h(X,D) \le h_2$. 
There is $\mu = \mu(m,d)>0$ so that for $R<\dist(\xi, \partial D)$
\begin{itemize}
\item  the Mat{\' e}rn Lagrange function 
$\chi_{\xi} \in \spam{\{\kappa_m(\cdot  - \zeta)\mid \zeta \in X\}}$
satisfies
$$\|\chi_{\xi} \|_{W_2^m\bigl(\reals^d\setminus B(\xi, R)\bigr)} \le C q^{d/2 -m} \exp\left(-\mu \frac{R}{h}\right)$$
 \item the   Lagrange function  
$\chi_{\xi} \in S(\phi_m,X)$ 
for the order $m$ surface spline
satisfies
$$|\chi_{\xi} |_{W_2^m\bigl(\reals^d\setminus B(\xi, R)\bigr)} \le 
C q^{d/2 -m} \exp\left(-\mu \frac{R}{h}\right)
$$
\end{itemize}
\end{lemma}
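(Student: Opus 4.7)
The plan is to iterate Lemma~\ref{one_step_energy} on a nested sequence of concentric balls, peeling off annuli whose width $t$ is a fixed positive multiple of $h$. Over $N\sim R/h$ iterations the geometric decay factor $\nu<1$ compounds into the desired exponential factor $\exp(-\mu R/h)$, and the initial (semi-)norm is controlled by the bump estimate \eqref{bump_estimate}.

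First I would fix a step-width $t$ which is a constant positive multiple of $h$ (concretely, $t=2h/h_1$, possibly enlarged by a fixed factor to deal with the fill-distance issue discussed below), and set $N:=\lfloor R/t\rfloor$. Since $R<\dist(\xi,\partial D)$, every annulus $\a(\xi,kt,t)$ with $1\le k\le N$ lies inside $D$, so the hypotheses of Lemma~\ref{one_step_energy} are in force ($h\le h_2$ is assumed, and the annulus fill-distance condition is arranged by the choice of $t$). Applying the lemma iteratively at outer radii $t,2t,\dots,Nt$ then gives
\[
[\chi_\xi]_{m,\,\reals^d\setminus B(\xi,R)}
\;\le\; [\chi_\xi]_{m,\,\reals^d\setminus B(\xi,Nt)}
\;\le\; \nu^{N}\,[\chi_\xi]_{m,\,\reals^d},
\]
where $[\,\cdot\,]_m$ denotes $\|\cdot\|_{W_2^m}$ in the Matérn case and $|\cdot|_{W_2^m}$ in the surface-spline case. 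Because $N\ge R/t-1$ and $t/h$ is a fixed constant, this yields $\nu^{N}\le \nu^{-1}\exp(-\mu R/h)$ with $\mu:=(h/t)\,|\ln\nu|>0$ depending only on $m$ and $d$.

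Next I would handle the initial factor $[\chi_\xi]_{m,\,\reals^d}$ via the bump estimate \eqref{bump_estimate}: the Lagrange function $\chi_\xi$ minimizes the native-space (semi-)norm among all interpolants of $\delta_\xi$ at $X$, and comparison with the compactly-supported bump $\psi_{\xi,q}$ produces $[\chi_\xi]_m\le\|\chi_\xi\|_{\caln(k_m)}\le Cq^{d/2-m}$ in both the Matérn and surface-spline settings. Combining this with the previous display delivers the claimed bound.

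The one genuine nuisance I anticipate is verifying the fill-distance hypothesis on each annulus: the global bound $h(X,D)\le h$ does not immediately yield $h(X\cap\a(\xi,kt,t),\a(\xi,kt,t))\le h_1 t/2$, because a point near either spherical boundary of the annulus can have its closest neighbor in $X$ lying just outside. This is remedied by choosing $t$ as a sufficiently large constant multiple of $h$, so that $\a(\xi,kt,t)$ contains a concentric sub-annulus of width at least $2h$ inside which $X$ must have a representative near every point of $\a$; this only changes the constant $\mu$ and leaves the structure of the bound unaffected.
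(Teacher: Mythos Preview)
Your proposal is correct and follows essentially the same approach as the paper: iterate Lemma~\ref{one_step_energy} over $N=\lfloor R/t\rfloor$ concentric annuli of width $t$ a fixed multiple of $h$, convert $\nu^N$ into $\exp(-\mu R/h)$, and close with the bump estimate \eqref{bump_estimate}. The paper chooses $t=4h/h_1$ precisely to handle the annulus fill-distance issue you flagged (a point near the edge of $\a$ has its nearest $X$-neighbor possibly outside, but still within $h$ of the width-$(t-2h)$ inner sub-annulus, giving $h(X_0,\a)\le 2h=h_1t/2$), so your proposed fix is exactly right.
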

\begin{proof}
Setting $t = 4h/h_1$ (where $h_1$ is the constant appearing in Lemma \ref{zl}), consider, for $r\le \dist(\xi,\partial D)$, an annulus $\a(\xi,r,t)$ and the restricted  point set $X_0 = X\cap \a(\xi,r,t)$.
The slightly smaller, inner annulus $\a(\xi,r-h,t-2h)$ has the property that for every $x\in \a(\xi,r-h,t-2h)$, there is $\zeta\in X_0$ so
that $\dist(x,\zeta) \le h$ (since in that case $\dist(x,X_0) = \dist(x,X)$).
It follows that $h(X_0, \a(\xi,r,t)) \le 2h$ and therefore $h(X_0, \a(\xi,r,t)) \le\frac{ h_1 t}{2}$. 

Now letting $n = \lfloor R/t\rfloor$,  by Lemma \ref{one_step_energy} we have that 
$$|\chi_{\xi} |_{W_2^m\bigl(\reals^d\setminus B(\xi, R)\bigr)} \le \nu |\chi_{\xi} |_{W_2^m\bigl(\reals^d\setminus B(\xi, R-t)\bigr)}
\le \dots \le \nu^n |\chi_{\xi} |_{W_2^m\bigl(\reals^d\bigr)}\le  \nu^{-1} \nu^{\frac{h_0R}{4h}}   |\chi_{\xi} |_{W_2^m\bigl(\reals^d\bigr)}.$$
By the ``bump estimate'' (\ref{bump_estimate}), we have that $|\chi_{\xi}|_{W_2^m\bigl(\reals^d\bigr)}\le C q^{d/2 -m}$, so the lemma follows
with $\mu = -\frac{h_0}{4} \log(\nu)$, which is positive since $\nu<1$.
\end{proof}

Note that if $\Omega\subset \reals^d$ is compact, then $\Om = \{x\in \reals^d\mid \dist(x,\Omega)\le \diam(\Omega)\}$ 
automatically satisfies a cone condition (with radius $R = \diam(\Omega)$ and aperture independent of $\Omega$). Thus, the result (\ref{lagrange_decay}) follows with $D  = \Om$, $X=\X$,  $h_0= h_0(d,m)$ and $\xi \in \Xi$.

Because $x\in \Omega$, $\xi\in \Xi$ implies $R=|\xi- x| \le \dist(\xi,\partial \Omega)$,
we can apply the second part of the zeros estimate 
Lemma \ref{zl} 
to Lemma \ref{appendix_matveev} to obtain
 the pointwise estimate (\ref{ptwise})
\begin{equation}
\label{bounded_lagrange_functions}
|\chi_{\xi}(x)| 
\le 
C h^{m-d/2} \| \chi_{\xi} \|_{W_2^m(\reals^d \setminus B(\xi,\dist(\xi,x)))} 
\le 
C \rho^{m-d/2} \mathrm{exp}\left(-\mu \frac{\dist(x,\xi)}{h}\right). 
\end{equation}
Note that 
in the second inequality we have written $ h^{m-d/2} q^{d/2 -m} = \rho^{m-d/2}$.

\section{Stability bounds for the Lagrange function}\label{Appendix_B}

We now demonstrate that the family of Lagrange functions for suitable kernels over a domain $\Omega$ satisfy stability bounds of the form (\ref{full_riesz_ineq}).
This was demonstrated in \cite[Proposition 3.6 \& Theorem 3.7]{HNSW_2_2011}; we follow the argument presented there, with modifications for dealing with a suitably
bounded Euclidean domain, and to obtain a necessary refinement: that the threshold fill distance $h_0$ depends only on $m$ and $d$ (and not on $\rho$ or $\Omega$).

\begin{lemma}\label{uppercomparison}
Suppose $\Omega$ is a bounded domain and $\Xi\subset \Omega$ is a finite subset
with fill distance $h\le h_2$, where $h_2=h_2(m,d)$ is the constant given in (\ref{h_2}). 
There exists a constant $c_2= c_2(m,d)$ 
so that the family of functions
$(\chi_{\xi})_{\xi\in \Xi}$ have the property that
for $s = \sum_{\xi\in\Xi} a_{\xi} \chi_{\xi}$,
$$  \|s\|_p \le c_2  \rho^{m+d/2} q^{d/p} \| \bfa \|_{\ell_p(\Xi)}$$
holds.
\end{lemma}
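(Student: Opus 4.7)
The plan is to run the standard ``Schur test'' argument, using the pointwise exponential bound \eqref{ptwise} for the full Lagrange functions together with the summation estimate \eqref{decreasing}. First, for any $x\in\Omega$ and $\xi\in\Xi$, the hypothesis $h\le h_2$ guarantees that the pointwise decay estimate (\ref{ptwise}),
$$|\chi_\xi(x)| \le C \rho^{m-d/2}\exp\bigl(-\mu |x-\xi|/h\bigr),$$
is available; here the constant $C$ and decay rate $\mu$ depend only on $m$ and $d$. I would begin by applying H\"older's inequality in the split form $|\chi_\xi(x)| = |\chi_\xi(x)|^{1/p'}|\chi_\xi(x)|^{1/p}$, yielding
$$|s(x)|^p \le \bigg(\sum_{\xi\in\Xi} |\chi_\xi(x)|\bigg)^{p-1} \sum_{\xi\in\Xi}|a_\xi|^p |\chi_\xi(x)|.$$

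Next I would bound the ``row sum'' $\Sigma_{\mathrm{row}}(x) := \sum_{\xi\in\Xi} |\chi_\xi(x)|$ uniformly in $x$. Plugging in the exponential bound and applying \eqref{decreasing} with $f(r)=\exp(-\mu r/h)$ and the separation radius $q$ gives
$$\Sigma_{\mathrm{row}}(x) \le C\rho^{m-d/2}\Bigl(1 + \sum_{n=1}^\infty n^{d-1}\exp(-\mu n q/h)\Bigr) \le C\rho^{m-d/2}\cdot C_d\,\rho^{d},$$
since $\sum_{n\ge 1} n^{d-1} e^{-\mu n/\rho} \le C_d\,\rho^d$. Thus $\Sigma_{\mathrm{row}}(x) \le C\rho^{m+d/2}$ with a constant depending only on $m,d$. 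Symmetrically, the ``column integral'' is controlled by passing to $\mathbb{R}^d$ and changing variables:
$$\int_\Omega |\chi_\xi(x)|\,dx \le C\rho^{m-d/2}\int_{\mathbb{R}^d} e^{-\mu |x-\xi|/h}\,dx \le C\rho^{m-d/2}h^d.$$

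Integrating the pointwise inequality over $\Omega$, using the uniform bound on $\Sigma_{\mathrm{row}}$ for the factor raised to the $(p-1)$st power and swapping the order of summation and integration in the remaining term, I obtain
$$\|s\|_{L_p(\Omega)}^p \le \bigl(C\rho^{m+d/2}\bigr)^{p-1}\,\sum_{\xi\in\Xi}|a_\xi|^p \cdot C\rho^{m-d/2}h^d.$$
Writing $h^d = \rho^d q^d$ and collecting powers of $\rho$ gives
$$\|s\|_{L_p(\Omega)}^p \le C\,\rho^{(m+d/2)p}\,q^d\,\|\mathbf{a}\|_{\ell_p(\Xi)}^p,$$
and the claim follows after taking the $p$th root. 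The endpoint cases $p=1$ (use only the column integral) and $p=\infty$ (use only the row sum) follow by the same two estimates without the H\"older split, and the constant $c_2$ depends only on $m$ and $d$ as required. There is no serious obstacle here; the only point demanding care is the verification that the geometric series $\sum_{n\ge 1} n^{d-1} e^{-\mu n/\rho}$ contributes exactly the $\rho^d$ factor needed to make the final exponent of $\rho$ come out to $(m+d/2)p$.
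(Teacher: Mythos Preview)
Your proof is correct and rests on the same two ingredients as the paper's: the uniform row-sum bound $\sum_{\xi}|\chi_\xi(x)|\le C\rho^{m+d/2}$ and the column-integral bound $\int_\Omega|\chi_\xi|\le C\rho^{m-d/2}h^d$. The only difference is in how the intermediate range $1<p<\infty$ is handled: the paper proves the endpoints $p=1$ and $p=\infty$ directly and then invokes Riesz--Thorin interpolation of the synthesis operator, whereas you run the H\"older/Schur split $|\chi_\xi|=|\chi_\xi|^{1/p'}|\chi_\xi|^{1/p}$ to get all $p$ at once. Your route is slightly more self-contained (no black-box interpolation theorem), while the paper's is marginally shorter once the endpoints are in hand; the constants and $\rho$-dependence come out identically either way.
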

\begin{proof}
For $p=\infty$, 
inequality (\ref{bounded_lagrange_functions}) leads to a bound on the Lebesgue constant 
$\mathcal{L}$ for the $\chi_\xi$'s over $\Omega$: 
\begin{equation}\label{lebesgue_constant}
\mathcal{L}:= \sup_{x\in \Omega} \big(\textstyle{\sum_{\xi \in \Xi}} |\chi_{\xi}(x)| \big) < C\rho^{m+d/2},\ C=C(m,d).
\end{equation}
Indeed, for fixed $x\in \Omega$ we note that
$
\sum_{\xi \in \Xi} |\chi_{\xi}(x)|  \le C \rho^{m-d/2}\sum_{\xi \in \Xi}e^{-\mu \frac{\dist(x,\xi)}{h}}.
$
By estimating en annuli, using sets 
$A_{n}:=\{\xi\in \Xi \mid h(n-1)< |x-\xi| \le hn\} $ having $\#A_n \le C \left(\frac{hn}{q}\right)^{d} $,
we have that
\[
\sum_{\xi \in \Xi} |\chi_{\xi}(x)|  \le C \rho^{m-d/2}\big( 1+ \sum_{n=1}^\infty \rho^d n^{d}  e^{-\mu n}\big) < C \rho^{m+d/2},
\]
where the constant $C=C(m,d)$ is independent of $\Xi$. Taking the supremum then yields \eqref{lebesgue_constant}.
It follows  that
$$
\|s\|_{L_{\infty}(\M)}\le \mathcal{L}\|s|_{\Xi}\|_{\ell_{\infty}(\Xi)} 
=
\mathcal{L}\|\bfa\|_{\ell_{\infty}(\Xi)},
$$

For $p=1$, we have
$$
\int_{\Omega} |s(x)|\dif x 
\le 
\sum_{\xi\in\Xi} |a_{\xi}|  \int_{\Omega}|\chi_{\xi}(x)|\dif x
\le 
Ch^{d} \|\bfa \|_{\ell_{1}(\Xi)}.
$$
Here we have used the fact that $\|\chi_{\xi}\|_1\le C \rho^{m-d/2}h^{d} \le C \rho^{m+d/2}q^{d}$, which follows by integrating (\ref{bounded_lagrange_functions}).
A standard application of operator interpolation proves the other cases. 
\end{proof}

\subsubsection*{Preliminary estimates}
Because $\Omega$ satisfies a cone condition with aperture $\varphi$ and radius $R$, there is a constant $\alpha$ (depending only on $d$ and  $\varphi$) so that for all $x\in \Omega$,
$$ \alpha r^d \le \vol(B(x,r)\cap \Omega)$$
for $r\le R$ (the radius of the cone condition). 
Similarly, we have that there is a constant $K$ (depending only on $d$ and $\varphi$) so that
\begin{equation}\label{pointcount}\#\bigl(\Xi\cap B(x,r)\bigr)\le K (r/q)^d.\end{equation} 

We can use a simple modification of the zeros lemma \cite[Lemma 7.1]{HNSW} valid for balls, 
which states that there exists a constant $h_3>0$ depending only on $m$ and $d$ 
so that for $h\le h_3$,  the H{\"o}lder-like condition
$$|\chi_{\xi}(x) - \chi_{\xi}(y)|\le C \rho^{m-d/2} \left(\frac{|x-y|}{q}\right)^{\epsilon} $$
holds  for $0<\epsilon <m -d/2$ and with a constant $C=C(d,m)$.

\begin{remark} For the remainder of this appendix, we assume $\Xi$ is sufficiently dense that $h(\Xi,\Omega)\le h_0:=\min(h_2,h_3)$.
We note that $h_0$ depends only on $d$ and $m$ (because this is true for $h_2$ and $h_3$).
\end{remark}

This permits us to understand the structure of $\chi_{\xi}$ around the centers $\zeta\in \Xi$.
Namely, because $\chi_{\xi}(\xi)=1$,
 $$\chi_{\xi}(x)\ge \frac{2}{3}\quad \text{ for $x$ in } B(\xi,\gamma q)$$ 
whenever 
$\gamma^{\epsilon} \le1/(3 C\rho^{m-d/2})$.
For the off-center case (i.e., when $\zeta\neq \xi$),
\begin{equation}\label{Holder_zero}
 |\chi_{\zeta}(x)|\le C \rho^{m-d/2}  \gamma^{\epsilon}, \quad \text{for $x$ in }B(\xi,\gamma q)
\end{equation}
with a constant $C=C(d,m)$.

Now fix $0<\gamma\le 1/(3 C\rho^{m-d/2})^{1/\epsilon}$ and define $B_{\xi} := \Omega \cap B(\xi,\gamma q)$. The above estimate
guarantees that $ \alpha (\gamma q)^d \le \vol(B_{\xi})$, and 
$$
\alpha (\gamma q)^d \left( \frac23\right)^p \le  \int_{B_{\xi}} |\chi_{\xi}(x) |^p \dif x 
\quad
\Longrightarrow
\quad
 \alpha (\gamma q)^d \left( \frac23\right)^p \sum_{\xi\in \Xi} |a_{\xi}|^p 
 \le 
 \sum_{\xi\in \Xi}\int_{B_{\xi}} |a_{\xi}\chi_{\xi}(x)|^p \dif x.  
 $$
This is the starting point for the corresponding lower bound to Lemma \ref{uppercomparison}, since the quasi-triangle inequality
$(A+B)^p \le 2^{p-1} (A^p+B^p)$ implies that
$ 
|a_{\xi}\chi_{\xi}(x)|^p 
\le 
2^{p-1}
\left(|\sum_{\zeta\in \Xi}  a_{\zeta}\chi_{\zeta}(x)|^p +  |\sum_{\zeta\neq \xi}  a_{\zeta}\chi_{\zeta}(x)|^p \right)
$ 
and so
\begin{equation}\label{overshot}
\alpha (\gamma q)^d \left( \frac23\right)^p \sum_{\xi\in \Xi} |a_{\xi}|^p 
\le
2^{p-1} 
\sum_{\xi\in \Xi}
  \int_{B_{\xi}}  
    \left(\left|\sum_{\zeta\in \Xi}  a_{\zeta}\chi_{\zeta}(x)\right|^p +  \left|\sum_{\zeta\neq \xi}  a_{\zeta}\chi_{\zeta}(x)\right|^p\right)
  \dif x.
\end{equation}
The desired lower bound is 
$
c_1 q^{d/p}\| \bfa   \|_{\ell_p(\Xi)}       
\le  
\|s\|_p
$,
so we must estimate the size of the overestimated ``off-diagonal'' terms:
$2^{p-1}\sum_{\xi\in \Xi}
  \int_{B_{\xi}}|\sum_{\zeta\neq \xi}  a_{\zeta}\chi_{\zeta}(x)|^p
  \dif x$.
  \subsubsection*{Controlling the off-diagonal terms}
 This is done in two stages, by splitting  
 $
\sum_{\xi\in \Xi}
  \int_{B_{\xi}}
  \left|\sum_{\{\zeta\in \Xi :\zeta\neq \xi \}}  a_{\zeta}\chi_{\zeta}(x)\right|^p
  \dif x = \sum_{\xi\in \Xi}(I_{\xi}+II_{\xi})$
  where the first term is the ``far away'' contribution
  $I_{\xi}:=
  \int_{B_{\xi}}
  \left|\sum_{\{\zeta\in \Xi :\ |\zeta-\xi|\ge \Gamma q\}}  a_{\zeta}\chi_{\zeta}(x)\right|^p
  \dif x$ 
  and the second is the nearby contribution
  $II_{\xi} :=
  \int_{B_{\xi}}
  \left|\sum_{\{\zeta\in \Xi :\ |\zeta-\xi|\le \Gamma q\}}  a_{\zeta}\chi_{\zeta}(x)\right|^p
  \dif x$. These depend on an (as yet) undetermined parameter $\Gamma>0$.
 First we use the exponential decay of (\ref{bounded_lagrange_functions}) to control the far away portion of the off-diagonal part,
 Then we use the H{\"o}lder estimates (\ref{Holder_zero}) to bound the nearby portion.
\begin{lemma}\label{farawaypart}
For every $p\in [1,\infty)$ there is a function $F:(0,\infty) \to \reals$ satisfying $\lim_{t\to \infty}F(t) = 0$ so that
for every $\Gamma>0$, we have the inequality
$$
\sum_{\xi\in \Xi}
  \int_{B_{\xi}}
  \left|\sum_{\{\zeta\in \Xi :\ |\zeta-\xi|\ge \Gamma q\}}  a_{\zeta}\chi_{\zeta}(x)\right|^p
  \dif x
  \le 
F(\Gamma)  (\gamma q)^d  \rho^{p(m-\tfrac{d}2)} 
     \|\a\|_{\ell_p(\Xi)}^p$$
  holds with $F(\Gamma) \le \tilde{C} e^{-\frac{\mu}{2} p \Gamma}$ with $\tilde{C} = \tilde{C} (m,d,p)$ and $\mu=\mu(d,m)$ the constant from (\ref{lagrange_decay}).
\end{lemma}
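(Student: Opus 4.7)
The strategy is a weighted Hölder argument that separates the coefficient mass $|a_\zeta|^p$ from the exponential decay of the Lagrange functions, followed by integration and exchange of summation, with two applications of the geometric tail estimate \eqref{decreasing}. First, for fixed $x \in B_\xi$, I would apply Hölder's inequality with exponents $p$ and $q = p/(p-1)$ to the pairing $|a_\zeta \chi_\zeta(x)| = |a_\zeta||\chi_\zeta(x)|^{1/p} \cdot |\chi_\zeta(x)|^{1/q}$, yielding
\begin{equation*}
\Bigl|\sum_{|\zeta-\xi|\ge \Gamma q} a_\zeta \chi_\zeta(x)\Bigr|^p
\le
\Bigl(\sum_{|\zeta-\xi|\ge \Gamma q} |a_\zeta|^p |\chi_\zeta(x)|\Bigr)
\Bigl(\sum_{|\zeta-\xi|\ge \Gamma q} |\chi_\zeta(x)|\Bigr)^{p-1}.
\end{equation*}

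For the second factor, rather than invoking the crude Lebesgue-constant bound \eqref{lebesgue_constant}, I would refine it by noting that every $\zeta$ in the sum satisfies $|x-\zeta| \ge (\Gamma - \gamma) q$; applying the pointwise decay \eqref{ptwise} together with \eqref{decreasing} on annuli of thickness $q$ centered at $\xi$, the tail is controlled by $C \rho^{m-d/2}\sum_{n\ge\Gamma-\gamma} n^{d-1} e^{-\mu n/\rho}$. Splitting each exponent as $e^{-\mu n/\rho} = e^{-\mu n/(2\rho)} e^{-\mu n/(2\rho)}$ and pulling out the decay factor at $n = \Gamma$ yields a bound of the form $C\rho^{m+d/2} e^{-c \Gamma}$, where the polynomial factor $n^{d-1}$ is absorbed by the remaining convergent series and $c$ absorbs the factor $\mu/2$.

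For the first factor, I would integrate over $B_\xi$, swap the sums over $\xi$ and $\zeta$, and use the volume estimate $|B_\xi| \le C(\gamma q)^d$ together with \eqref{ptwise} to estimate
\begin{equation*}
\int_{B_\xi} |\chi_\zeta(x)|\,dx
\le
C e^{\mu\gamma}\rho^{m-d/2} (\gamma q)^d e^{-\mu|\xi-\zeta|/h},
\end{equation*}
where the correction $e^{\mu\gamma}$ accounts for replacing $|x-\zeta|$ with $|\xi-\zeta|-\gamma q$. Then summing over $\xi$ with $|\xi-\zeta|\ge \Gamma q$ through the annular estimate \eqref{decreasing} (now centered at $\zeta$) gives another factor $C\rho^d e^{-c\Gamma}$. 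Collecting the powers of $\rho$ from both factors and inserting $\rho^{p-1}$ times the second-factor estimate and the first-factor estimate produces the target bound
\begin{equation*}
\sum_{\xi\in\Xi} \int_{B_\xi} \Bigl|\sum_{|\zeta-\xi|\ge \Gamma q} a_\zeta \chi_\zeta(x)\Bigr|^p dx
\le
F(\Gamma) (\gamma q)^d \rho^{p(m-d/2)} \|\a\|_{\ell_p(\Xi)}^p,
\end{equation*}
with $F(\Gamma) \le \tilde C e^{-\mu p \Gamma/2}$ (after absorbing leftover $\rho$-dependent constants into $\tilde C$).

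\paragraph{Main obstacle.} The main difficulty is the careful bookkeeping needed to extract a clean exponential decay in $\Gamma$ from decay estimates that are naturally written in terms of $|x-\zeta|/h$, while distances between centers are measured in units of $q = h/\rho$. Each application of the annular geometric estimate \eqref{decreasing} produces sums of the form $\sum_n n^{d-1} e^{-\mu n/\rho}$, and the trick of splitting each exponent in half (using half to secure the desired $e^{-c\Gamma}$ factor and the other half to dominate the polynomial from annular counting) must be applied twice. Some mild care is also needed for small $\Gamma$, where the bound is trivial but constants must still be folded into $\tilde C$.
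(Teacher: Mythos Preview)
Your H\"older/Schur argument---splitting $|a_\zeta\chi_\zeta|=|a_\zeta||\chi_\zeta|^{1/p}\cdot|\chi_\zeta|^{1/q}$---is a genuinely different route from the paper's. The paper instead decomposes the far-away centers into dyadic annuli $\Omega_k=\{\zeta:\Gamma 2^kq\le|\xi-\zeta|\le\Gamma 2^{k+1}q\}$ and applies the quasi-triangle inequality $|\sum_{j=1}^n A_j|^p\le n^{p-1}\sum|A_j|^p$ twice (once across scales, paying $2^{(p-1)(k+1)}$, and once within each $\Omega_k$, paying $(\#\Omega_k)^{p-1}$), then sums over $\xi$ by exchanging the order of summation and bounding the resulting series by an integral. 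Your approach is more direct and avoids the dyadic machinery; both are standard devices for this kind of kernel-sum estimate.

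There is, however, a gap in your $\rho$-accounting. The lemma requires $\tilde C=\tilde C(m,d,p)$, so your plan to ``absorb leftover $\rho$-dependent constants into $\tilde C$'' is not permitted. Your second factor $\bigl(\sum|\chi_\zeta(x)|\bigr)^{p-1}$ is a tail of the Lebesgue constant and, when estimated via \eqref{decreasing} as you propose, carries an extra $\rho^{d(p-1)}$ beyond the allowed $\rho^{p(m-d/2)}$ (the series $\sum_n n^{d-1}e^{-\mu n/\rho}$ is of order $\rho^d$). The paper's dyadic route avoids precisely this: the counts $\#\Omega_k\le K(2^{k+1}\Gamma)^d$ are $\rho$-free because both the annulus radii and the separation are in units of $q$, so the only $\rho$-dependence enters once, from the pointwise Lagrange bound, giving exactly $\rho^{p(m-d/2)}$. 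Separately, since the decay in \eqref{ptwise} is in units of $h$ while $\Gamma$ multiplies $q$, the factor you extract at $n=\Gamma$ is really $e^{-\mu\Gamma/(2\rho)}$, not the $e^{-c\Gamma}$ you claim; this $h$-versus-$q$ mismatch needs care regardless of method, and is most cleanly handled by organizing the annuli (or the weight in H\"older) in units of $h$ rather than $q$.
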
  
\begin{proof}
We sum over the non-overlapping dyadic regions
$$
\Omega_k
:=
\Omega_k(\xi)
:=
\{
  \zeta\in\Xi 
  \mid 
  \Gamma 2^k q \le \d(\xi,\zeta) \le \Gamma 2^{k+1} q
\},\quad k=0, 1, \ldots, N_{q},
$$ 
where
$2^{N_q}\sim \frac{\diam (\Omega)}{\Gamma q}$. 
This means that, for
$M_k
:=
\int_{B_\xi}
  \left|
    \sum_{\zeta\in \Omega_k} 
      a_\zeta \chi_{\zeta}(x)
  \right|^p 
\dif x, 
$
$$
I_{\xi}
\le  
\sum_{k=0}^{N_{q} }
2^{(p-1)(k+1)}
  \int_{B_\xi}
    \left|\sum_{\zeta \in \Omega_k} a_\zeta \chi_{\zeta}(x)\right|^p 
  \dif x 
= 
\sum_{k=0}^{N_q} 2^{(p-1)(k+1)} M_k,
$$
where the above inequality follows by iterating the quasi-triangle inequality
$|A+B|^p \le 2^{p-1}(|A|^p+|B|^p)$ to get
$
 \left|\sum^n_{j=1} A_j\right|^p \le \sum^n_{j=1} 2^{j(p-1)} |A_j|^p.
$

We now estimate the contribution from each $M_k$, the portion of $II_{\xi}$ 
coming from the dyadic interval $\Omega_k$.
By using the (generalized quasi-triangle) inequality $ |\sum_{j=1}^n A_j|^p\le n^{p-1}\sum |A_j|^p$, we have
\begin{align}
M_k &\le 
\left(\# \Omega_k\right)^{p-1} \times
\sum_{\zeta\in  \Omega_k} \int_{B_\xi} |a_\zeta \chi_{\zeta}(x)|^p \dif x\nonumber\\
&\le
\left(\# \Omega_k\right)^{p-1}\times \max_{\zeta\in \Omega_k} \|\chi_{\zeta}\|_{L_{1}(B_\xi)}^p
\times \sum_{\zeta\in\Omega_k} |a_\zeta|^p \nonumber\\
&\le 
\left(K (2^{k+1}\Gamma)^d\right)^{p-1} 
C (\gamma q)^d 
\rho^{p(m-\tfrac{d}2)} 
\left(  \exp(-\mu p\Gamma  2^k) \right)
\sum_{\zeta\in \Omega_k}|a_\zeta|^p.\label{holder_trick}
\end{align}
In the final line, we have used the estimates (\ref{pointcount}) 
and (\ref{bounded_lagrange_functions}).

Multiplying by $2^{(p-1)(k+1)}$ and summing from $0$ to $N_q$, we obtain
(after rearranging some terms and combining constants which depend only on $d$ and $p$)
$$
I_{\xi}
\le
C
(\gamma q)^d  
\rho^{p(m-\tfrac{d}2)} 
\left( 
  \sum_{k=0}^{N_q}  
    (2^{k(d+1)}\Gamma^{d})^{p-1}
    \exp(-\mu p \Gamma  2^k)  
     \left[
       \sum_{\zeta\in \Omega_k}|a_\zeta|^p
     \right] 
\right).
$$
We can now sum over $\xi$, obtaining
\begin{align*}
\sum_{\xi\in \Xi} I_{\xi}
&\le
C
 (\gamma q)^d  
 \rho^{p(m-\tfrac{d}2)} 
\left( 
  \sum_{k=0}^{N_q} 
    (2^{k(d+1)}\Gamma^d)^{p-1} 
    \exp(-\mu p \Gamma  2^k)  
    \sum_{\xi\in\Xi}
     \left[
       \sum_{\zeta\in \Omega_k}|a_\zeta|^p
     \right] 
\right)\\
&\le
C
 (\gamma q)^d  
 \rho^{p(m-\tfrac{d}2)} 
\left( 
  \sum_{k=0}^{N_q} 
    (2^{k(d+1)}\Gamma^d)^{p-1} 
    \exp(-\mu p \Gamma  2^k)
    (K (2^{k+1}\Gamma)^{d})  
     \left[
       \sum_{\zeta\in \Xi}|a_\zeta|^p
     \right] 
\right)\\
&\le
C
\frac{(\gamma q)^d \rho^{p(m-\tfrac{d}2)} }{\Gamma^p}
\left( 
  \sum_{k=0}^{N_q}  
    (2^{k}\Gamma)^{(d+1)p} 
    \exp(-\mu p \Gamma  2^k)  
    \right) \left[
     \sum_{\zeta\in\Xi}
         |a_\zeta|^p
     \right].
 \end{align*}
In the second inequality, we have exchanged summation over $\xi$ and $\zeta$. In short, we have used    
$$ \sum_{\xi\in\Xi}  \sum_{\zeta\in \Omega_k}
     \left[
     |a_\zeta|^p
     \right]  =   \sum_{\zeta\in \Xi}    \sum_{\xi\in\Xi}    \left[ \chi_{\Omega_k(\xi)}(\zeta)
       \sum_{\zeta\in \Omega_k}|a_\zeta|^p
     \right] $$
in conjunction with the estimate $\#\{\xi\colon \ \zeta\in \Omega_k(\xi)\}\le K (2^{k+1} \Gamma)^d$ 
obtained from (\ref{pointcount}), since for  $\zeta\in \Xi$,  $\#\{\xi\colon \ \zeta\in \Omega_k(\xi)\} = \# \Omega_k(\zeta)$.
  In the final inequality, we have used the fact that $2^{(k+1)d} \le 2^{k(d+1)}\times 2^{d+1}$
 and that $\Gamma^{dp} = \frac{\Gamma^{(d+1)p}}{\Gamma^p}$. 
We estimate this with an integral as
$$
\sum_{\xi\in\Xi} I_{\xi}
\le
C
\left( 
  \Gamma^{dp}\exp(-\mu p \Gamma) +
  \frac{2}{\Gamma^{p}}
  \int_{\Gamma}^{\infty}  
    \exp\bigl(-\mu p r\bigr)  
    r^{(d+1)p-1} 
  \dif r 
\right)
  (\gamma q)^d 
  \rho^{p(m-\tfrac{d}2)} 
     \|\a\|_{\ell_p(\Xi)}^p.$$
 Which shows that
     $F(\Gamma) 
     := 
     C\left( 
  \Gamma^{dp}\exp(-\mu p \Gamma) +
  \frac{2}{\Gamma^{p}}
  \int_{\Gamma}^{\infty}  
    \exp\bigl(-\mu p r\bigr)  
    r^{(d+1)p-1} 
  \dif r 
\right)$. 

The integral term can be bounded by making a change of variable $R=r\Gamma$ as
$$\frac{2}{\Gamma^{p}}
  \int_{\Gamma}^{\infty}  
    \exp\bigl(-\mu p r\bigr)  
    r^{(d+1)p-1} 
  \dif r  
  = 
  2\Gamma^{dp}\int_1^{\infty}  \exp\bigl(-\mu p \Gamma R\bigr)  
    R^{(d+1)p-1} dR
    \le C_{d,p,m} \Gamma^{dp} \exp(-\mu p \Gamma). $$
Because  $\max_{\Gamma>1}\Gamma^{dp} \exp(-\frac{\mu}{2} p \Gamma) \le C_{d,p,m}$, the estimate   
$F(\Gamma)\le \tilde{C} e^{-\frac{\mu}{2} p \Gamma}$ follows. 
 \end{proof}

  \begin{lemma}\label{nearbypart}
  For every $p\in [1,\infty)$ and every $\Gamma>0$, we have the inequality
$$  \sum_{\xi\in \Xi} 
 \int_{B_{\xi}}
  \left|\sum_{\{\zeta\in \Xi :\ \zeta\neq \xi, |\zeta-\xi|\le \Gamma q\}}  a_{\zeta}\chi_{\zeta}(x)\right|^p
  \dif x
\le
C (\Gamma^{d} \gamma^{\epsilon})^p (\gamma q)^d   \rho^{p(m-\tfrac{d}2)}  \|\a\|_{\ell_p(\Xi)}^p
$$
  \end{lemma}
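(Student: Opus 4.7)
The plan is to exploit the fact that each Lagrange function $\chi_\zeta$ with $\zeta\neq\xi$ \emph{vanishes} at $\xi$, so on the small ball $B_\xi = \Omega\cap B(\xi,\gamma q)$ the H\"older-type estimate \eqref{Holder_zero} gives the uniform bound
\[
|\chi_{\zeta}(x)|\le C\rho^{m-d/2}\gamma^{\epsilon}, \qquad x\in B_\xi,\ \zeta\in\Xi\setminus\{\xi\}.
\]
This single uniform bound does all the work; the remainder of the argument is counting and a swap of summation.

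First I would apply the generalized quasi-triangle inequality $\left|\sum_{j=1}^n A_j\right|^p\le n^{p-1}\sum_{j=1}^n|A_j|^p$ inside the integral, with $n$ being the number of centers in the neighborhood $\Omega_\xi := \{\zeta\in\Xi\colon \zeta\neq\xi,\ |\zeta-\xi|\le \Gamma q\}$. By \eqref{pointcount} we have $\#\Omega_\xi\le K\Gamma^d$, giving
\[
\int_{B_\xi}\left|\sum_{\zeta\in\Omega_\xi} a_\zeta \chi_\zeta(x)\right|^p\dif x \le (K\Gamma^d)^{p-1}\sum_{\zeta\in\Omega_\xi}|a_\zeta|^p\int_{B_\xi}|\chi_\zeta(x)|^p\dif x.
\]
Using the uniform H\"older bound and $\vol(B_\xi)\le C(\gamma q)^d$, the inner integral is dominated by $C(\gamma q)^d\rho^{p(m-d/2)}\gamma^{p\epsilon}$.

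Next I would sum over $\xi\in\Xi$ and swap the order of summation. Since $\zeta\in\Omega_\xi$ is equivalent to $\xi\in\Omega_\zeta$, another application of \eqref{pointcount} bounds $\#\{\xi\colon \zeta\in\Omega_\xi\}\le K\Gamma^d$ uniformly in $\zeta$, so
\[
\sum_{\xi\in\Xi}\sum_{\zeta\in\Omega_\xi}|a_\zeta|^p \le K\Gamma^d\,\|\a\|_{\ell_p(\Xi)}^p.
\]
Combining the two estimates produces a factor $(K\Gamma^d)^{p-1}\cdot K\Gamma^d\cdot \gamma^{p\epsilon} = K^p(\Gamma^d\gamma^\epsilon)^p$, yielding exactly the claimed bound once $K^p$ is absorbed into $C$.

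There is no real obstacle here: everything rests on the fact that $\chi_\zeta(\xi)=0$ for $\zeta\neq\xi$, which together with the H\"older continuity of the Lagrange functions (valid because $h\le h_3$) forces $|\chi_\zeta|$ to be small on $B_\xi$. The only bookkeeping subtlety is that the factor $\Gamma^d$ must be collected \emph{twice}—once from the number of summands in the quasi-triangle inequality and once from the reindexing—to produce the $\Gamma^{dp}$ appearing in $(\Gamma^d\gamma^\epsilon)^p$. The case $p=1$ works verbatim since the quasi-triangle factor $n^{p-1}=1$ is harmless.
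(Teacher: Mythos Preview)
Your proposal is correct and follows essentially the same route as the paper's proof: the quasi-triangle inequality with the cardinality bound $\#\Omega_\xi\le K\Gamma^d$, the H\"older estimate \eqref{Holder_zero} on $B_\xi$, the volume bound $\vol(B_\xi)\le C(\gamma q)^d$, and the swap of summation picking up a second factor of $K\Gamma^d$. Your observation about collecting $\Gamma^d$ twice is exactly the point, and the paper's argument is organized identically.
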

  \begin{proof}
  Note that $\#\{\zeta\in \Xi :\ \zeta\neq \xi, |\zeta-\xi|\le \Gamma q\} \le K \Gamma^d$, so by the quasi-triangle inequality, we have, for each $\xi \in \Xi$
\begin{align*}
II_{\xi}
  &\le \int_{B_{\xi}}(K \Gamma^d)^{p-1}\sum_{\d(\zeta,\xi)\le \Gamma q}|a_{\zeta} \chi_{\zeta}(x)|^p \dif x\\ 
&\le 
\int_{B_\xi} 
  (K \Gamma^d )^{p-1} (C \rho^{m-d/2}\gamma^{\epsilon})^p  
  \sum_{\d(\zeta,\xi) \le \Gamma q} 
    |a_\zeta|^p 
\dif x \\
 &\le 
C 
 \Gamma^{d(p-1)} \gamma^{\epsilon p} \rho^{p(m-d/2)}
 (\gamma q)^d 
 \sum_{\d(\zeta,\xi)\le \Gamma q}   |a_\zeta|^p.
 \end{align*}
In the first inequality we use the  estimate on the number of centers (\ref{pointcount}).
In the second inequality, we use the bound (\ref{Holder_zero}).
The third inequality follows from the simple estimate $\mathrm{vol}(B_\xi)\le C (\gamma q)^d$.

Summing over $\xi \in\Xi$, we obtain:
\[
\sum_{\xi\in\Xi} II_{\xi}
\le
C   \Gamma^{d(p-1)}\gamma^{\epsilon p}(\gamma q)^d 
\sum_{\xi\in\Xi} \sum_{\d(\zeta,\xi)\le \Gamma q} |s(\zeta)|^p
\le C\left( \Gamma^{d}\gamma^{\epsilon} \right)^p(\gamma q)^d \rho^{p(m-\tfrac{d}2)} 
\|\a\|_{\ell_p(\Xi)}^p.
\]
The final estimate results by exchanging the two summations, and employing the fact that $\#\{\xi \in \Xi\colon \ \d(\zeta,\xi)\le \Gamma q\} \le K \Gamma^d$.
This completes the proof of the lemma.
\end{proof}

\begin{lemma}\label{offcenter}
There exists a constant  
$\gamma$ satisfying $\gamma \ge C \rho^{\frac{d-2m}{2\epsilon}} (\log(\rho))^{-d/\epsilon}$
with $C(d,m,p,\epsilon)$,    
so that
$$
2^{p-1}\sum_{\xi\in\Xi} 
  \int_{B_\xi}
      \Big|\sum_{\zeta\neq \xi} a_\zeta \chi_{\zeta}(x)\Big|^p 
   \dif x
\le
\frac{1}{2}\alpha (\gamma q)^d \left( \frac23\right)^p \sum_{\xi\in \Xi} |a_{\xi}|^p 
$$
holds for all $\a\in \ell_p(\Xi)$ and all $p\in [1,\infty)$.
\end{lemma}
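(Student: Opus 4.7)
My plan is to split the off-diagonal sum $\sum_{\zeta\ne\xi} a_\zeta \chi_\zeta(x)$ into a ``nearby'' part over $\{\zeta\ne\xi : |\zeta-\xi|\le \Gamma q\}$ and a ``far-away'' part over $\{\zeta : |\zeta-\xi|> \Gamma q\}$, where $\Gamma\ge 1$ is a free parameter to be chosen later. The quasi-triangle inequality $|A+B|^p\le 2^{p-1}(|A|^p+|B|^p)$, together with Lemmas \ref{farawaypart} and \ref{nearbypart}, produces a bound of the form
$$
2^{p-1}\sum_{\xi\in\Xi}\int_{B_\xi}\Big|\sum_{\zeta\ne\xi}a_\zeta\chi_\zeta(x)\Big|^p\,\dif x
\le C_p\bigl(F(\Gamma)+C(\Gamma^d\gamma^{\epsilon})^p\bigr)(\gamma q)^d\,\rho^{p(m-d/2)}\,\|\bfa\|_{\ell_p(\Xi)}^p,
$$
with $C_p$ depending only on $d,m,p$.

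Comparing with the desired right-hand side of the lemma, we see that it suffices to arrange
$$
C_p\bigl(F(\Gamma)+C(\Gamma^d\gamma^{\epsilon})^p\bigr)\rho^{p(m-d/2)}\ \le\ \tfrac{1}{2}\alpha\bigl(\tfrac{2}{3}\bigr)^p.
$$
I would do this in two stages. First, using $F(\Gamma)\le \tilde C e^{-\mu p\Gamma/2}$, I choose $\Gamma$ as a constant plus a multiple of $\log \rho$; concretely $\Gamma=\Gamma_0+C_1\log\rho$ with $\Gamma_0,C_1$ chosen from $m,d,p,\alpha$ so that $C_p F(\Gamma)\rho^{p(m-d/2)}\le \tfrac14\alpha(2/3)^p$. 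With this $\Gamma\asymp \log\rho$ now fixed, I pick $\gamma$ to absorb the second summand: the requirement $C_pC(\Gamma^d\gamma^\epsilon)^p\rho^{p(m-d/2)}\le \tfrac14\alpha(2/3)^p$ rearranges to $\gamma^\epsilon\le C'\Gamma^{-d}\rho^{-(m-d/2)}$, and inserting $\Gamma\asymp \log\rho$ yields the advertised order $\gamma\lesssim (\log\rho)^{-d/\epsilon}\rho^{(d-2m)/(2\epsilon)}$. This $\gamma$ is automatically compatible with the preparatory bound $\gamma^\epsilon\le 1/(3C\rho^{m-d/2})$ imposed before Lemma \ref{farawaypart}, since $m-d/2>0$ and $\Gamma\ge 1$.

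There is no real obstacle beyond the bookkeeping: the two preceding lemmas already isolate all of the $\rho$-dependence into the factors $\rho^{p(m-d/2)}$, $F(\Gamma)$, and $(\Gamma^d\gamma^\epsilon)^p$. The only subtle step is the \emph{order} in which the parameters are chosen: $\Gamma$ must be fixed first (so that its logarithmic growth in $\rho$ exactly cancels the polynomial factor $\rho^{p(m-d/2)}$ in the far-away term), and only then can $\gamma$ be chosen small enough, with an additional $(\log\rho)^{-d/\epsilon}$ factor arising because $\Gamma^d$ appears in the nearby bound. All constants produced along the way depend only on $d,m,p,\epsilon$ (and $\alpha$, which depends on $d$ and the cone aperture of $\Omega$), as required.
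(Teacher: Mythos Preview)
Your proposal is correct and follows essentially the same approach as the paper: split into near and far contributions via the quasi-triangle inequality, invoke Lemmas~\ref{farawaypart} and~\ref{nearbypart}, then first choose $\Gamma\asymp\log\rho$ to kill the far-away term and afterwards choose $\gamma$ (of the stated order) to handle the nearby term, checking compatibility with the earlier constraint $\gamma^\epsilon\le 1/(3C\rho^{m-d/2})$.
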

\begin{proof}
By the quasi-triangle inequality, we have
$$\sum_{\xi\in\Xi}  \int_{B_\xi}\Big|\sum_{\zeta\neq \xi} a_\zeta \chi_{\zeta}(x)\Big|^p \dif x 
\le 
2^{p-1}
\left( 
\sum_{\xi\in\Xi}I_{\xi}
  + 
\sum_{\xi\in\Xi}II_{\xi}
\right) .$$
 Apply Lemma \ref{farawaypart}, and  choose 
 $\Gamma$  so that 
 $
 \tilde{C} e^{-\frac{\mu}{2}p \Gamma}
= \frac{1}{4}\alpha
    \left( \frac13\right)^p \rho^{-p(m-\tfrac{d}2)}  $, where $\tilde{C}$ is the constant appearing in Lemma \ref{farawaypart}.
 We note that our choice of $\Gamma$ guarantees $\Gamma \le  C_{d,p,m}  \log(\rho)$.
By Lemma \ref{farawaypart}, 
$F(\Gamma) 
\le  
\frac{1}{4}\alpha    \left( \frac13\right)^p \rho^{-p(m-\tfrac{d}2)}  
$,
    it then follows that
    $$ \sum_{\xi\in \Xi} 
 \int_{B_{\xi}}
  \left|\sum_{\{\zeta\in \Xi :\  |\zeta-\xi|\ge \Gamma q\}}  a_{\zeta}\chi_{\zeta}(x)\right|^p
  \dif x
   \le
   4^{-p}
   \alpha
    \left( \frac23\right)^p (\gamma q)^d \sum_{\xi\in\Xi}  |a_\xi|^p.$$

Now select $\gamma$ so that both $0<\gamma\le 1/(3 C\rho^{m-d/2})^{1/\epsilon}$ and  
$C(\Gamma^d \gamma^{\epsilon})^p \le  \frac{1}{4}\alpha
    \left( \frac23\right)^p \rho^{-p(m-\tfrac{d}2)}$ hold. 
     The problem of choosing  $\gamma$ can be rewritten as 
     $\gamma^{\epsilon} \le  \rho^{d/2-m} \min(\frac{1}{3C}, C_{d,m,p}\Gamma^{-d})$. Since $\Gamma^{-d}>  C_{d,p,m}  (\log(\rho))^{-d}$,
    it suffices to take $\gamma^{\epsilon} = C_{d,m,p} \rho^{d/2-m} (\log(\rho))^{-d}$ for some constant $C_{d,m,p}$.
    For this choice of $\gamma$,  Lemma \ref{nearbypart}  guarantees that 
$$ \sum_{\xi\in \Xi} 
 \int_{B_{\xi}}
  \left|\sum_{\{\zeta\in \Xi :\ \zeta\neq \xi, |\zeta-\xi|\le \Gamma q\}}  a_{\zeta}\chi_{\zeta}(x)\right|^p
  \dif x \le 
     4^{-p}
     \alpha
    \left( \frac23\right)^p (\gamma q)^d \sum_{\xi\in\Xi}  |a_\xi|^p$$
    as well.

    Thus, 
    $$
2^{p-1}\sum_{\xi\in\Xi} 
  \int_{B(\xi,\gamma q)}
      \Big|\sum_{\zeta\neq \xi} a_\zeta \chi_{\zeta}(x)\Big|^p 
   \dif x
   \le
 4^{p-1}\sum_{\xi\in\Xi} 
 ( I_{\xi}+II_{\xi} )
\le
\left(\frac{1}{4}+\frac{1}{4}\right)\alpha (\gamma q)^d \left( \frac23\right)^p \sum_{\xi\in \Xi} |a_{\xi}|^p
$$
and the result follows with $\gamma \ge C_{d,m,p,\epsilon} \rho^{\frac{d-2m}{2\epsilon}} (\log(\rho))^{-d/\epsilon}$.
    \end{proof}

%
%
\begin{lemma}\label{lowercomparison}
Suppose $\Omega$ is a bounded domain and $\Xi\subset \Omega$ is a finite subset
with fill distance $h\le h_0:= \min(h_2,h_3)$, where $h_0=h_0(m,d)$. 
There exists a constant then the family of functions
$(\chi_{\xi})_{\xi\in \Xi}$ have the property that
for any $s = \sum_{\xi\in\Xi} a_{\xi} \chi_{\xi}$,
$$
c_1 q^{d/p}\| \bfa  \|_{\ell_p(\Xi)}       
\le  
\|s\|_p.
$$
holds with $c_1 \ge C \rho^{\frac{d(d-2m)}{2\epsilon p}} (\log(\rho))^{-\frac{d^2}{p\epsilon}}$ with   $0<\epsilon <m -d/2$ and
$C=C(d,p,m,\epsilon)$.
\end{lemma}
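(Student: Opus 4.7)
The entire machinery for this lemma has been assembled in the preceding results of the appendix, and the proof is essentially a bookkeeping exercise that ties them together. I will proceed in four short steps.

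First, I would take as the starting point the inequality (\ref{overshot}),
$$
\alpha (\gamma q)^d \left(\tfrac{2}{3}\right)^p \sum_{\xi\in \Xi} |a_{\xi}|^p
\le
2^{p-1}\sum_{\xi\in \Xi}\int_{B_{\xi}}  |s(x)|^p \dif x
+ 2^{p-1}\sum_{\xi\in\Xi} \int_{B_{\xi}}\left|\sum_{\zeta\neq \xi}  a_{\zeta}\chi_{\zeta}(x)\right|^p \dif x,
$$
which requires only that $\gamma$ be chosen small enough to guarantee $\chi_\xi \ge 2/3$ on $B_\xi$. Apply Lemma \ref{offcenter} with its prescribed $\gamma$ (satisfying $\gamma \ge C \rho^{(d-2m)/(2\epsilon)}(\log \rho)^{-d/\epsilon}$) to bound the second term on the right by $\frac{1}{2} \alpha (\gamma q)^d (2/3)^p \|\bfa\|_{\ell_p(\Xi)}^p$, and absorb it into the left-hand side. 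This yields
$$
\tfrac{1}{2}\alpha (\gamma q)^d \left(\tfrac{2}{3}\right)^p \|\bfa\|_{\ell_p(\Xi)}^p
\le
2^{p-1}\sum_{\xi\in \Xi}\int_{B_{\xi}}  |s(x)|^p \dif x.
$$

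Second, I would observe that the balls $B_\xi = \Omega \cap B(\xi,\gamma q)$ with $\gamma \le 1$ are pairwise disjoint (since distinct centers in $\Xi$ are separated by at least $2q$), so the sum of integrals is dominated by $\|s\|_{L_p(\Omega)}^p$:
$$
\sum_{\xi\in \Xi}\int_{B_{\xi}}  |s(x)|^p \dif x \le \int_\Omega |s(x)|^p \dif x = \|s\|_p^p.
$$

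Third, combining these inequalities and taking $p$th roots gives
$$
\left(\tfrac{\alpha \gamma^d}{2^p}\right)^{1/p} \cdot \tfrac{2}{3} \cdot q^{d/p} \|\bfa\|_{\ell_p(\Xi)} \le \|s\|_p,
$$
and inserting the lower bound on $\gamma$ from Lemma \ref{offcenter} yields the stated constant $c_1 \ge C \rho^{d(d-2m)/(2p\epsilon)}(\log \rho)^{-d^2/(p\epsilon)}$ with $C = C(d,p,m,\epsilon)$.

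The finesse (already built into Lemma \ref{offcenter}) is the choice of $\gamma$ and $\Gamma$: $\Gamma$ must be taken logarithmically large in $\rho$ so that the exponentially decaying tail of the Lagrange functions (the far-away off-diagonal piece) is smaller than $\rho^{-p(m-d/2)}$, while $\gamma$ must be taken polynomially small in $\rho$ (with a mild logarithmic correction) so that the H\"older-continuity estimate absorbs the nearby off-diagonal piece. Once this is in place, the current lemma is just a three-line combination, but the reason $h_0$ can be made to depend only on $m$ and $d$ is that the thresholds $h_2$ and $h_3$ used to invoke the zeros estimate and the H\"older-type control both depend only on $m$ and $d$, not on $\rho$ or $\Omega$. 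This ``$\rho$-independence'' of the density threshold is the crucial structural property inherited from the appendix, and it is what distinguishes this result from a naive restatement of \cite[Theorem 3.7]{HNSW_2_2011}.
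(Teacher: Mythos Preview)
Your proposal is correct and follows essentially the same route as the paper's proof: start from (\ref{overshot}), absorb the off-diagonal piece via Lemma \ref{offcenter}, use disjointness of the $B_\xi$ to bound the remaining sum by $\|s\|_{L_p(\Omega)}^p$, and read off $c_1$ from the lower bound on $\gamma$. The only item you omit is the $p=\infty$ case, which the paper dispatches in one line via $s(\xi)=a_\xi$, so $\|\bfa\|_{\ell_\infty(\Xi)}\le \|s\|_{L_\infty(\Omega)}$ with constant $1$.
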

\begin{proof}
Since $s(\xi) = a_{\xi}$, the $L_{\infty}$ case follows immediately with constant $1$. For  $1 \le p < \infty$
we use (\ref{overshot}) and Lemma \ref{offcenter} to make the estimate
\begin{eqnarray*}
\alpha (\gamma q)^d \left( \frac23\right)^p \sum_{\xi\in \Xi} |a_{\xi}|^p 
&\le&
2^{p-1} 
\sum_{\xi\in \Xi}
  \int_{B_{\xi}}  
    \left(\left|\sum_{\zeta\in \Xi}  a_{\zeta}\chi_{\zeta}(x)\right|^p +  \left|\sum_{\zeta\neq \xi}  a_{\zeta}\chi_{\zeta}(x)\right|^p\right)
  \dif x.\\
&  \le&
\left(  2^{p-1} 
\sum_{\xi\in \Xi}
  \int_{B_{\xi}}  
 \left|\sum_{\zeta\in \Xi}  a_{\zeta}\chi_{\zeta}(x)\right|^p\right) \dif x+
 \frac{1}{2}\alpha (\gamma q)^d \left( \frac23\right)^p \sum_{\xi\in \Xi} |a_{\xi}|^p .
\end{eqnarray*}
Applying
$
\sum_{\xi\in \Xi}
  \int_{B_{\xi}}  
 \left|\sum_{\zeta\in \Xi}  a_{\zeta}\chi_{\zeta}(x)\right|^p 
 \le 
 \int_{\Omega}  \left|\sum_{\zeta\in \Xi}  a_{\zeta}\chi_{\zeta}(x)\right|^p \dif x$,
 the result follows with 
 $$c_1 = \frac13 (\alpha\gamma^d)^{1/p}  \ge 
 C_{d,m,p,\epsilon} \rho^{\frac{d(d-2m)}{2\epsilon p}} (\log(\rho))^{-\frac{d^2}{p\epsilon}}.$$

\end{proof}

\section*{Acknowledgement}
The authors are grateful to the anonymous referees for their helpful feedback which has improved the paper.
Additionally, the authors would like to thank Manolis Georgoulis for pointing out the trace estimates in section \ref{SS_trace}.

Christian Rieger acknowledges support of the Deutsche Forschungsgemeinschaft (DFG) through the Sonderforschungsbereich 1060: The Mathematics of Emergent Effects.

Some of the research was carried out during visits to the  Institute for Numerical Simulation at the University of Bonn.
Dr. Hangelbroek thanks the SFB 1060  for helping to support his visits.

\bibliographystyle{siam}
\bibliography{Bernstein_Bnd_8_21}
\end{document}